\numberwithin{equation}{section}
\Crefname{figure}{Figure}{Figures}
\newcommand\mynewtheorem[4][]{ 
  \ifthenelse{\isempty{#1}}{   
    \newtheorem{#2}{#3}        
  }{
    \newaliascnt{#2}{#1}       
    \newtheorem{#2}[#2]{#3}    
    \aliascntresetthe{#2}      
  }
  \crefname{#2}{#3}{#4}        
}
\newtheorem{thm}{Theorem}[section]\crefname{thm}{Theorem}{Theorems}
\theoremstyle{definition}
\theoremstyle{remark}
\providecommand{\dom}{\operatorname*{dom}}
\providecommand{\esssup}[1][]{\operatorname*{\ifthenelse{\equal{}{#1}}{}{\mathit{#1}-}ess\,sup}}
\providecommand{\essinf}[1][]{\operatorname*{\ifthenelse{\equal{}{#1}}{}{\mathit{#1}-}ess\,inf}}
\newcommand{\supp}{\operatorname{supp}}
\newcommand{\dist}{\operatorname{dist}}
\newcommand{\norm}[2][]{\lVert#2\ifthenelse{\equal{}{#1}}{\rVert}{\mathclose{\rVert}_{#1}}}
\newcommand{\bignorm}[2][]%
  {\bigl\lVert#2\bigr\rVert\ifthenelse{\equal{}{#1}}{}{_{#1}}}
\newcommand{\Bignorm}[2][]%
  {\Bigl\lVert#2\Bigr\rVert\ifthenelse{\equal{}{#1}}{}{_{#1}}}
\newcommand{\biggnorm}[2][]%
  {\biggl\lVert#2\biggr\rVert\ifthenelse{\equal{}{#1}}{}{_{#1}}}
\newcommand{\Norm}[2][]%
  {\left\lVert#2\right\rVert\ifthenelse{\equal{}{#1}}{}{_{#1}}}
\newcommand{\spr}[2]{\langle#1,#2\rangle}
\newcommand{\Id}[1][]{\operatorname{Id\ifthenelse{\equal{}{#1}}{}{_{#1}}}}
\newcommand{\Tr}{\operatorname{Tr}}
\newcommand{\E}{\mathbbm E}
\newcommand{\Prob}{\mathbbm P}
\newcommand{\bC}{\mathbbm C}
\newcommand{\bN}{\mathbbm N}
\newcommand{\bQ}{\mathbbm Q}
\newcommand{\bR}{\mathbbm R}
\newcommand{\bZ}{\mathbbm Z}
\newcommand{\cB}{\mathcal B}
\newcommand{\cD}{\mathcal D}
\newcommand{\cDc}{\cD}
\newcommand{\cF}{\mathcal F}%
\newcommand{\cH}{\mathcal H}
\newcommand{\cL}{\mathcal L}
\newcommand{\textq}[1]{\text{#1}\quad}
\newcommand{\qtextq}[1]{\quad\text{#1}\quad}
\newcommand{\qtext}[1]{\quad\text{#1}}
\newcommand{\qqtext}[1]{\qquad\text{#1}}
\let\originald\d 
\renewcommand{\d}{\ifthenelse{\boolean{mmode}}{\mathrm d}{\originald}}
\def\Int#1d{\int#1\,\d} 
\providecommand{\cc}[1]{\overline{#1}}
\providecommand{\grad}{\nabla}
\DeclareMathOperator{\divergence}{div}
\DeclarePairedDelimiter\abs\lvert\rvert
\providecommand{\bigabs}[2][]{{\bigl\lvert#2\bigr\rvert}\ifthenelse{\equal{#1}{}}{}{_{#1}}}
\DeclarePairedDelimiter\ceil\lceil\rceil
\DeclarePairedDelimiter\floor\lfloor\rfloor
\providecommand{\setsize}[1]{\operatorname{\#}#1}
\providecommand{\ve}{\varepsilon}
\providecommand{\vol}{\abs}
\providecommand{\xto}{\xrightarrow}
\providecommand{\dnto}{\searrow}
\providecommand{\ifu}[1]{\mathbf 1\ifthenelse{\equal{#1}{_}}{#1}{_{#1}}}
\providecommand{\Vper}{V_{\mathrm{per}}}
\providecommand{\Hper}{H_{\mathrm{per}}}
\DeclarePairedDelimiterX\Ioo[2](){#1,#2}
\DeclarePairedDelimiterX\Ico[2][){#1,#2}
\DeclarePairedDelimiterX\Ioc[2](]{#1,#2}
\DeclarePairedDelimiterX\Icc[2][]{#1,#2}
\providecommand{\etdef}{\phantom{:}&\rlap{:}=}
\providecommand{\isect}{\cap}\newcommand{\Isect}{\bigcap}
\newcommand{\Union}{\bigcup}
\newcommand{\Directsum}{\bigoplus}
\providecommand{\cA}{\mathcal A}
\providecommand{\cC}{\mathcal C}
\providecommand{\sse}{\subseteq}
\providecommand{\rstr}[1]{|_{#1}}
\providecommand{\Laplace}{\Delta}
\providecommand{\fa}{\forall}
\providecommand{\ex}{\exists}
\providecommand{\from}{\colon}
\providecommand{\e}{\mathrm e}
\providecommand{\argmt}{\mathchoice{{}\cdot{}}{{}\cdot{}}{{}\bullet{}}{{}\bullet{}}}
\let\originald\d 
\renewcommand{\d}[1]{\ifthenelse{\boolean{mmode}}{\mathrm d#1}{\originald}}
\providecommand{\dd}{\,\d}
\providecommand{\dx}{\d x}
\providecommand{\ddx}{\dd x}
\providecommand{\ddy}{\dd y}
\providecommand{\molly}[1]{\chi\ifthenelse{\equal{#1}{}}{}{_{#1}}}%
\providecommand{\M}{\mathcal M} 
\DeclareMathOperator\diag{diag}
\providecommand{\ie}{i.\,e.}
\providecommand{\eg}{e.\,g.}
\newif\ifdr@ft\dr@ftfalse
\newcounter{const@ntNo}
\newcommand{\reloadConst@nt}{%
  \stepcounter{const@ntNo}%
  \edef\genericConst@ntInternal{C_{\theconst@ntNo}}%
}
\newcommand{\genericConstant}[1]{%
  \reloadConst@nt%
  \expandafter\let\csname #1\endcsname\genericConst@ntInternal%
  \ifthenelse{\boolean{dr@ft}}{%
    \marginpar{\textcolor{gray}{\fbox{#1 = $\csname #1\endcsname$}}}%
  }{}%
}
\newcommand{\Schroedinger}{\foreignlanguage{ngerman}{Schr\"odinger}}
\begin{document}

\title[Random breather models]
  {Lifshitz asymptotics and localization for random breather models}
\author{Christoph Schumacher \and Ivan Veseli\'c}
\address{Fakult\"at f\"ur Mathematik, 44227 TU Dortmund}

\begin{abstract}
  We prove Lifshitz behavior at the bottom of the spectrum
  for non--negative random potentials,
  i.\,e.\ show that the IDS is exponentially small at low energies.
  The theory is developed for the breather potential and generalized to all
  non--negative random potentials in a second step.
  Since our models need not be ergodic, we need to identify the minimum of the spectrum.
  We deduce an initial length scale estimate from the Lifshitz bound for the breather model
  and combine it with a recent Wegner estimate
  to establishes Anderson localization via multi-scale analysis.
  Finally, for ergodic models, we complement the Lifshitz behavior with a lower bound.
  We provide detailed proofs accessible to non-experts.
\end{abstract}

\maketitle

\tableofcontents

\section{Introduction}

\subsection{Motivation}

Transport properties of disordered media continuously attract substantial attention
in physical and mathematical literature.
In the latter they are often modelled by random Schr\"odinger operators.
The corresponding theory has been developed over several decades,
and while certain questions have been successfully answered, new ones emerge.
This review focuses on two of these research directions:
The first one is interest in properties of random operators with non-linear dependence on randomness and the second one in the role of ergodicity - actually, the lack thereof - in the modelling.

Why do these to features require new ideas?

Random operators with linear dependence on random variables facilitate specialised
perturbation techniques and explicit formulas, for instance spectral averaging.
Non-linear dependence on the other hand requires more robust analytic and probabilistic tools.

The most basic challenge appearing when studying non-ergodic models is the identification, or even definition, of spectral edges,
in our particular case of the spectral minimum.
We have to resolve this question in order to formulate a meaningful Lifshitz bound.

The models we study here, namely random breather potentials,
exemplify both the non-linear dependence on randomness and the non-stationarity of the potential.
In fact, in certain aspects, the random breather model is paradigmatic for monotone random potentials:
For a large class of such potentials the proof of Lifshitz bounds can be reduced to the one for a standard breather model.

If one assumes additionally that the model is ergodic, the integrated density of states exist and
our results imply indeed Lifshits tails in the usual sense.

Subsequently, we prove localisation at the bottom of the spectrum for a class of random breather models.
This is established using the multi-scale analysis, which has been developed and perfected in a continuous effort, see for instance  \cite{Stollmann-01,GerminetK-04} and the references therein.
The induction anchor is provided by our mentioned Lifshitz tail bound and the induction step is inferred from a versatile Wegner estimate of \cite{NakicTTV-15,NakicTTV-18b}.

While we defer a broader discussion of the history of the subject to the subsequent subsection let us mention briefly that random
breather models have been studied among others in \cite{CombesHM-96,CombesHN-01,KirschV-10,NakicTTV-15,SchumacherV-17,NakicTTV-18b}.

One aim of this work is to extend the results of the paper \cite{KirschV-10},
where Lifshitz tails have been established for a restricted class of breather models.
The method used there is not applicable to the models we study here, not even to the standard breather model, cf.~Example~\ref{exmp:standard-breather}.
The reason is, that in \cite{KirschV-10} strong regularity conditions are imposed in order to linearise the breather model
and thus reduce its study to the alloy type model.

\subsection{Sketch of results}
We present here the main results of this paper in a simplified setting.

Let $C_+>0$, $u_k\in L^\infty(\bR^d)$, $k\in\bZ^d$,
be non--negative and non--vanishing with support in the ball of radius $C_+$ around the origin.
Furthermore, let $\lambda_k\colon \Omega \to \Icc0{C_+}$, $k\in\bZ^d$, be independent random variables.
Construct the random breather potential $W_\omega$ as follows
\begin{equation*}
  W\colon\Omega\times\bR^d \to \Ico0\infty\textq,
  W_\omega(x):=\sum_{k \in\bZ^d} u_k\left(\frac{x-k}{\lambda_k}\right)
\end{equation*}
where we set  $u_k(y/\lambda)=0$ for $\lambda=0$.
Now we define for a $\bZ^d$-periodic potential~$\Vper\in L^\infty(\bR^d,\bR)$
a periodic and a random Schr\"odinger operator
\begin{equation*}
  \Hper:=-\Laplace+\Vper
  \qtextq{and}
  H_\omega:=\Hper+W_\omega.
\end{equation*}

In order to have a truly random potential, we have to avoid a deterministic situation.
The following condition ensures that the potential $W_\omega$ is \emph{sometimes not too small}.
We require that there exists a $\mu>0$ such that
  \begin{equation*}
    \inf_{k\in\bZ^d}\Prob\left\{
      \vol{\{x\in\left[-\frac{1}{2},\frac{1}{2}\right]^d\mid u_k\left(\frac{x-k}{\lambda_k}\right)\ge\mu\}}\ge\mu \right\}
      \geq\mu\text.
  \end{equation*}
This implies that the $u_k$ have some support near the origin.

The following lemma ensures on the other hand that the potential~$W_\omega$ is \emph{sometimes really small}.
In particular, there is randomness in the model.
\begin{lemma}
  Set $E_0:=\inf\sigma(\Hper)$.
  Assume that for all $\kappa>0$ and all $k\in\bZ^d$,   we have
  \begin{equation}
    \Prob\left\{\biggnorm[p]{u_k\left(\frac{\cdot}{\lambda_k}\right)}
      \le\kappa\right\}
    >0\text.
  \end{equation}
  Then, the operator~$H_\omega$   satisfies for all $\varepsilon>0$
  \begin{equation*}
    \Prob\{\omega\in\Omega:\inf\sigma(H_\omega)\le E_0+\varepsilon\}>0\text.
  \end{equation*}
\end{lemma}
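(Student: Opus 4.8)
The plan is to exhibit, with positive probability, a compactly supported trial function $\psi$ with $\spr\psi{H_\omega\psi}<E_0+\ve$; since $W_\omega\ge0$ forces $\inf\sigma(H_\omega)\ge\inf\sigma(\Hper)=E_0$, only this upper bound needs work. First I would invoke the variational characterization of the spectral bottom,
\begin{equation*}
  E_0=\inf\bigl\{\,\norm{\grad\phi}^2+\spr\phi{\Vper\phi}\bigm|\phi\in H^1(\bR^d),\;\norm\phi=1\,\bigr\}\text,
\end{equation*}
and, since $C_{\mathrm c}^\infty(\bR^d)$ is dense in $H^1(\bR^d)$ while the quadratic form is continuous there (as $\Vper\in L^\infty$), fix $\psi\in C_{\mathrm c}^\infty(\bR^d)$ with $\norm\psi=1$ and $\norm{\grad\psi}^2+\spr\psi{\Vper\psi}<E_0+\tfrac\ve2$. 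Then I would pick a bounded set $\Lambda\sse\bR^d$ with $\supp\psi\sse\Lambda$ and put $\tLambda:=\{k\in\bZ^d\mid\dist(k,\Lambda)\le C_+^2\}$; this set is finite, and since $\lambda_k\le C_+$ the translate $u_k(\tfrac{\cdot-k}{\lambda_k})$ is supported in $\ball{C_+^2}{k}$, so on $\Lambda$ the potential $W_\omega$ coincides with the finite sum $\sum_{k\in\tLambda}u_k(\tfrac{\cdot-k}{\lambda_k})$.

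The heart of the matter is to make the perturbation small on $\psi$. With $p'$ the exponent conjugate to $p$ and $M:=\norm[p']{\abs{\psi}^2}$ (finite, as $\psi$ is smooth with compact support), Hölder's inequality, positivity, the triangle inequality in $L^p$ and translation invariance give
\begin{align*}
  \spr\psi{W_\omega\psi}
    &=\sum_{k\in\tLambda}\int_\Lambda u_k\left(\frac{x-k}{\lambda_k}\right)\abs{\psi(x)}^2\,\mathrm dx\\
    &\le M\sum_{k\in\tLambda}\norm[p]{u_k\left(\frac{\cdot}{\lambda_k}\right)}\text.
\end{align*}
I would then choose $\kappa>0$ so small that $M\,\setsize{\tLambda}\,\kappa\le\tfrac\ve2$ and set
\begin{equation*}
  \mathcal E:=\bigl\{\,\omega\in\Omega\bigm|\norm[p]{u_k(\tfrac{\cdot}{\lambda_k})}\le\kappa\text{ for every }k\in\tLambda\,\bigr\}\text.
\end{equation*}
On $\mathcal E$ the last sum is at most $\setsize{\tLambda}\,\kappa$, so $\spr\psi{W_\omega\psi}\le\tfrac\ve2$; since $W_\omega$ is locally integrable, $\psi$ belongs to the form domain of $H_\omega=\Hper+W_\omega$, and the min--max principle yields
\begin{equation*}
  \inf\sigma(H_\omega)\le\norm{\grad\psi}^2+\spr\psi{\Vper\psi}+\spr\psi{W_\omega\psi}<E_0+\ve\qquad\text{on }\mathcal E\text.
\end{equation*}

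Finally I would check that $\Prob(\mathcal E)>0$. The events $\{\,\norm[p]{u_k(\tfrac{\cdot}{\lambda_k})}\le\kappa\,\}$, $k\in\tLambda$, are functions of the individual $\lambda_k$ and hence independent, each has strictly positive probability by hypothesis, and $\tLambda$ is finite, so $\Prob(\mathcal E)=\prod_{k\in\tLambda}\Prob\{\,\norm[p]{u_k(\tfrac{\cdot}{\lambda_k})}\le\kappa\,\}>0$. Since $\mathcal E\sse\{\,\omega\in\Omega\mid\inf\sigma(H_\omega)\le E_0+\ve\,\}$, this gives $\Prob\{\,\omega\in\Omega\mid\inf\sigma(H_\omega)\le E_0+\ve\,\}\ge\Prob(\mathcal E)>0$, which is the assertion. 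The only step that requires any care is the first one — that $E_0=\inf\sigma(\Hper)$ is approximated to within $\ve/2$ by a smooth compactly supported function; this is classical, because $C_{\mathrm c}^\infty(\bR^d)$ is a form core for a \Schroedinger\ operator with bounded potential and the infimum of the Rayleigh quotient over a form core equals $\inf\sigma$. Everything else reduces to a finite-volume Hölder estimate together with the independence of the $\lambda_k$.
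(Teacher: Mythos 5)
Your proof is correct, and it takes a somewhat different route than the paper's. The paper proves the general version (Lemma~\ref{upperbound}) using the specific test function $\psi_{x,L}=\Psi\cdot\molly{x,L}/\norm[2]{\Psi\cdot\molly{x,L}}$ built from the periodic ground state $\Psi$ of $\Hper$, and then splits $\norm[p]{W_\omega\ifu{\Lambda_L}}$ into a finite part (over $I_R$) plus a tail, which is handled by dominated convergence using the uniform local $L^p$ bound~\eqref{eq:Wlocp} — this is necessary because the general lemma admits single site potentials without compact support. You instead take a generic test function $\psi\in C^\infty_c(\bR^d)$ (justified by the form core argument) and exploit the compact support of the $u_k$ that is explicitly available in the introduction's simplified setting: this makes $W_\omega\rstr{\supp\psi}$ depend on only finitely many $\lambda_k$, so no dominated convergence argument is needed. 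Your approach buys a more elementary and self-contained proof for the stated lemma; the paper's buys a proof that also covers the non-compactly-supported case and reuses the $\psi_{x,L}$ machinery that appears elsewhere in the paper (Lemmas~\ref{approxEF} and~\ref{BC}). One small note of style: your set $\tLambda=\{k\in\bZ^d\mid\dist(k,\Lambda)\le C_+^2\}$ correctly captures the $k$ whose translated single site potentials can intersect $\Lambda$ (since $|x-k|\le\lambda_k C_+\le C_+^2$ on $\supp u_k((\cdot-k)/\lambda_k)$), which is what you need for the finite-sum representation of $W_\omega$ on $\Lambda$.
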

This identifies $E_0$ as the overall minimum of the spectrum of the non-ergodic operator $H_\omega$.

Under the assumptions spelled out so far, our (upper) Lifshitz bound now reads as follows:
\begin{thm}\label{thm:Intro-Lif}
  There exist constants $C,\delta\ge0$ and $E'>E_0$
  such that for all $E\in\Ioc{E_0}{E'}$
  \begin{equation}
    \Prob\bigl\{\omega\in\Omega:E_1(H_\omega^{L_E})\le E\bigr\}
      \le\exp\bigl(-C(E-E_0)^{-\frac d2}\bigr)\text,
  \end{equation}
  where $L_E=\floor[\big]{\sqrt{\delta/(E-E_0)}}$.
\end{thm}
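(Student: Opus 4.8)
The plan is to deduce the bound from a \emph{deterministic} spectral lower bound for $H_\omega^{L}$ that holds whenever an atypically large fraction of the breather bumps inside $\Lambda_L:=[-L/2,L/2]^d$ is present, combined with a Chernoff estimate showing that the complementary event has probability at most $\exp(-C(E-E_0)^{-d/2})$. Write $\varepsilon:=E-E_0$, put $Q_k:=k+[-\tfrac12,\tfrac12]^d$, and call $k\in\bZ^d$ \emph{occupied} if the event $G_k:=\bigl\{\vol{\{x\in Q_k:u_k((x-k)/\lambda_k)\ge\mu\}}\ge\mu\bigr\}$ occurs. By the standing assumption $\Prob(G_k)\ge\mu$, the $G_k$ are independent (each depends only on $\lambda_k$), and on an occupied cube $W_\omega\ge\mu$ on a set of measure $\ge\mu$, since $W_\omega\ge u_k((\cdot-k)/\lambda_k)\ge0$. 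We take $H_\omega^{L}$ with periodic boundary conditions on $\Lambda_L$; since $\Vper$ is $\bZ^d$-periodic and $L\in\bN$ this is period-compatible and $\inf\sigma(\Hper^{L})=E_0$, so only the disorder can push $E_1$ above $E_0$, and that is what must be exploited.

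The deterministic heart is: there are $L_0\ge1$ and $c_*>0$, depending only on $d$, $\mu$ and $\Vper$, such that if at least $\tfrac\mu2 L^d$ of the sites in $\Lambda_L$ are occupied and $L\ge L_0$, then $E_1(H_\omega^{L})\ge E_0+c_* L^{-2}$. I would prove this in two steps. First, the ground-state substitution $\psi=u_0v$, where $u_0>0$ is the periodic ground state of $\Hper$ (bounded above and below since $\Vper\in L^\infty$): under periodic boundary conditions the boundary terms vanish and the $\Vper$-terms cancel against $-\tfrac12\Laplace(u_0^2)$, so that $E_1(H_\omega^{L})-E_0\ge\tfrac{\inf u_0^2}{\sup u_0^2}\,E_1(-\Laplace^{L,\mathrm{per}}+W_\omega)$, reducing matters to $\Vper\equiv0$. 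Second, since the occupied cubes are disjoint, $W_\omega\ge\mu\mathbf 1_S$ on $\Lambda_L$ with $S:=\{x\in\Lambda_L:W_\omega(x)\ge\mu\}$ of measure $\vol S\ge\tfrac{\mu^2}2 L^d$, and one bounds $E_1(-\Laplace^{L,\mathrm{per}}+\mu\mathbf 1_S)$ from below by an elementary Poincaré argument: for a normalised ground state $v$ with mean $\bar v$ over $\Lambda_L$ the spectral gap of $-\Laplace^{L,\mathrm{per}}$ gives $\norm[L^2(\Lambda_L)]{v-\bar v}^2\le C_P L^2 E_1$; if $E_1\le(4C_PL^2)^{-1}$ this forces $L^d\abs{\bar v}^2\ge\tfrac14$, hence $\int_S\abs v^2\ge\tfrac12\vol S\,\abs{\bar v}^2-\norm[L^2(\Lambda_L)]{v-\bar v}^2\ge\tfrac{\vol S}{8L^d}-C_PL^2E_1$, and feeding this into $\mu\int_S\abs v^2\le E_1$ together with $\vol S\ge\tfrac{\mu^2}2 L^d$ yields $E_1\ge c_* L^{-2}$ for $L\ge L_0$, while the remaining case $E_1>(4C_PL^2)^{-1}$ is only better. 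The point is that a fixed \emph{volume fraction} of $\{W_\omega\ge\mu\}$ — which is what $\tfrac\mu2 L^d$ occupied cubes provide — is needed for the sharp $L^{-2}$; a sublinear number of bumps would give a far weaker exponent.

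To conclude, the number of occupied sites in $\Lambda_{L_E}$ is a sum of $L_E^d$ independent Bernoulli variables each of mean $\ge\mu$, so a Chernoff bound for a binomial below half its mean gives $\Prob\{\text{fewer than }\tfrac\mu2 L_E^d\text{ occupied sites in }\Lambda_{L_E}\}\le\exp(-\mu L_E^d/8)$. On the complementary event, if also $L_E\ge L_0$, the deterministic bound gives $E_1(H_\omega^{L_E})\ge E_0+c_*L_E^{-2}\ge E_0+c_*\varepsilon/\delta$, which exceeds $E$ as soon as $\delta<c_*$; hence $\{E_1(H_\omega^{L_E})\le E\}$ is contained in the event of fewer than $\tfrac\mu2 L_E^d$ occupied sites, and since $L_E=\floor{\sqrt{\delta/\varepsilon}}\ge\tfrac12\sqrt{\delta/\varepsilon}$ for $\varepsilon$ small we obtain $\Prob\{E_1(H_\omega^{L_E})\le E\}\le\exp(-\mu L_E^d/8)\le\exp(-C\varepsilon^{-d/2})$ with $C:=\mu\delta^{d/2}/(8\cdot2^d)$. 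Taking $\delta<c_*$ and $E'=E_0+\varepsilon_0$ with $\varepsilon_0$ small enough that $L_E\ge L_0$ and $L_E\ge\tfrac12\sqrt{\delta/\varepsilon}$ throughout $\Ioc{E_0}{E'}$ completes the argument.

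The main obstacle is the deterministic spectral lower bound — in particular, extracting the correct $L^{-2}$ decay uniformly over all placements of the occupied sites; the resolution is that demanding a positive \emph{fraction} of occupied cubes forces $\vol S\gtrsim L^d$, which is exactly what makes the Poincaré estimate close at the right rate. Secondary technical points are: the ground-state substitution when $\Vper\not\equiv0$ (needing continuity and positivity, hence two-sided boundedness, of $u_0$ on the torus, which holds for $\Vper\in L^\infty$); the choice of periodic boundary conditions, so that the background does not already open the gap and the substitution produces no boundary term; and the fact that the usual period-compatibility and integer-rounding nuisances are automatic here because $\Vper$ is $\bZ^d$-periodic and $L_E\in\bN$.
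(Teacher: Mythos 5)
Your proposal is correct, and it reaches the theorem by a genuinely different route than the paper. Both arguments share the same scaffolding: normalize so that $E_0=\inf\sigma(\Hper)$ is the finite-volume ground state energy, handle the periodic background by a ground-state substitution $\psi=u_0v$ with the positive periodic ground state $u_0$ bounded above and below by Harnack, exploit the $L^{-2}$ spectral gap of the free finite-volume operator, choose $L_E\sim(E-E_0)^{-1/2}$, and close with an exponential concentration inequality over the $\sim L_E^d$ independent random variables inside $\Lambda_{L_E}$. The paper realizes the substitution by working with Mezincescu boundary conditions (which are designed precisely so that $\Psi\ifu{\Lambda_L}$ is the unperturbed ground state with $\inf\sigma(\Hper^{L,M})=E_0$ and gap $\gtrsim L^{-2}$), whereas you implement it directly under periodic boundary conditions with a variational argument; both work, and for the probabilistic statement alone the choice of boundary condition is immaterial, though Mezincescu is later needed in the paper for the IDS via superadditivity.

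The substantive difference is the deterministic spectral lower bound. The paper shifts the potential by $\gamma_L\sim L^{-2}$ to make it strictly positive and applies Thirring's inequality (\cref{lemma:Thirring}, \cref{corollary:Thirring}) to get the clean, explicit bound $\frac{\gamma_L}{2}S_L\le E_1(H_\omega^L)$ with $S_L=\frac1{\setsize{I_L}}\sum_kX_k$, $X_k=\int_{\supp u_{\lambda_k}}\abs\Psi^2$, and then uses Chernov/Bernstein for the $[0,1]$-valued $X_k$. You instead threshold at the level of sites (``occupied'' if $\vol\{x\in Q_k:u_k\ge\mu\}\ge\mu$), turn the problem into a dichotomy on $E_1^0:=E_1(-\Laplace^{L,\mathrm{per}}+W_\omega)$: either $E_1^0$ already exceeds $(4C_PL^2)^{-1}$, or a Poincaré estimate forces the ground state to be nearly flat and therefore to charge a fixed fraction of the set $S$ where $W_\omega\ge\mu$, yielding $E_1^0\ge\mu\int_S\abs v^2\gtrsim\mu^3/(1+\mu C_PL^2)\gtrsim L^{-2}$; then Chernoff for Bernoulli occupancy indicators finishes. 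Your route is more elementary in that it avoids Thirring and — importantly — still sidesteps Temple's inequality, whose failure for the standard breather is exactly what the paper's ``Temple vs.\ Thirring'' section is about: the Poincaré argument involves no second moment of the potential, so the obstruction does not arise. What Thirring buys the paper is a single, case-free, quantitative inequality in which the continuous random variable $S_L$ appears untruncated; what your approach buys is elementarity and closeness to the classical discrete-model Lifshitz arguments, at the modest cost of the dichotomy and the thresholding constants. Both yield the same exponent $(E-E_0)^{-d/2}$, and your proof of the deterministic step (the inequality $\abs v^2\ge\tfrac12\abs{\bar v}^2-\abs{v-\bar v}^2$, the bound $\vol S\ge\tfrac{\mu^2}{2}L^d$ from disjointness of the cubes, and the final absorption of $C_PL^2E_1^0$) is sound.
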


In the ergodic situation, the integrated density of states (IDS) exist and exhibits Lifshitz tails.

\begin{thm}
  Let the conditions of Theorem \ref{thm:Intro-Lif} hold.
  Assume additionally that the random variables~$\lambda_k$,
  $k\in\bZ^d$, are identically distributed and $u_k=u_1$ for all $k\in\bZ^d$.
  Then the IDS $N\from\bR\to\bR$ exists, and we have
  \begin{equation}
    \lim_{E\dnto E_0}\frac{\ln(-\ln(N(E)))}{\ln(E-E_0)}
      \le-\frac{d}{2}\text.
  \end{equation}
  If, in addition
  \begin{equation*}
    \exists\ \alpha_0, \eta>0\colon\forall\ \alpha\in\Icc0{\alpha_0}\colon
    \Prob\bigl\{\norm[L^\infty]{u_{\lambda_0}}\le\alpha\bigr\}
      \ge\alpha^\eta\text.
  \end{equation*}
  holds, then the IDS satisfies
  \begin{equation}
    \lim_{E\dnto E_0}\frac{\ln(-\ln(N(E)))}{\ln(E-E_0)}
      =-\frac{d}{2}\text.
  \end{equation}
  \end{thm}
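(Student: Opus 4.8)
The plan is to settle the three assertions in turn, using \cref{thm:Intro-Lif} as the only serious input. \emph{Existence of the IDS.} Under the additional hypotheses the $\lambda_k$ are i.i.d., so $W_\omega$ is $\bZ^d$-stationary, $W_{\tau_j\omega}(x)=W_\omega(x+j)$, and (the $\lambda_k$ being independent) $\bZ^d$-ergodic; since $\Vper$ is $\bZ^d$-periodic, $(H_\omega)_{\omega\in\Omega}$ is a $\bZ^d$-ergodic family of self-adjoint operators. The classical ergodic theory (Pastur--Shubin, Kirsch--Martinelli) then produces a deterministic distribution function $N$ to which the normalized finite-volume eigenvalue counting functions converge almost surely at every continuity point of $N$, and delivers the Dirichlet--Neumann sandwich
\[
  \frac1{\abs{\Lambda_L}}\,\E\bigl[N(H_\omega^{\Lambda_L,D},E)\bigr]\ \le\ N(E)\ \le\ \frac1{\abs{\Lambda_L}}\,\E\bigl[N(H_\omega^{\Lambda_L,N},E)\bigr]\qquad(L\in\bN),
\]
where $N(A,E):=\#\{\text{eigenvalues of }A\text{ below }E\}$ and the superscripts $D,N$ denote Dirichlet resp.\ Neumann restriction to $\Lambda_L$.

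\emph{Upper bound.} I would use the Neumann half of the sandwich. Fix $E\in\Ioc{E_0}{E'}$ with $E-E_0$ small and set $L=L_E$. On the complement of $\{E_1(H_\omega^{\Lambda_L,N})\le E\}$ the Neumann counting function vanishes, and on that event — because $W_\omega\ge0$, so adding $W_\omega$ only raises eigenvalues by min--max — it is bounded by $N(-\Laplace^{\Lambda_L,N}+\Vper,E)\le N(-\Laplace^{\Lambda_L,N},E_0+1+\norm[L^\infty]{\Vper})\le C\abs{\Lambda_L}$ by Weyl's law, uniformly for $E\le E_0+1$. Hence $N(E)\le C\,\Prob\{E_1(H_\omega^{\Lambda_L,N})\le E\}$, and \cref{thm:Intro-Lif} bounds the right-hand side by $C_1\exp(-C_2(E-E_0)^{-d/2})$ (should the box operator in \cref{thm:Intro-Lif} carry a different boundary condition, one first inserts the routine comparison of box operators, which shifts the threshold only by $O(L^{-2})=O(E-E_0)$). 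Taking $\ln$ twice and dividing by $\ln(E-E_0)<0$ yields $\limsup_{E\dnto E_0}\ln(-\ln N(E))/\ln(E-E_0)\le-d/2$, the first claim.

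\emph{Lower bound.} Assuming in addition the polynomial smallness hypothesis, I would use the Dirichlet half of the sandwich together with a boundary-fluctuation estimate. The periodic ground state $\psi_0$ of $\Hper$ is continuous, $\bZ^d$-periodic and bounded above and below by positive constants, and the trial function $\chi\psi_0$ with $\chi=1$ on $\Lambda_{L/2}$, $\supp\chi\sse\Lambda_L$ and $\abs{\grad\chi}\le C/L$ gives, via the IMS localization formula, $E_1(-\Laplace^{\Lambda_L,D}+\Vper)\le E_0+C_0L^{-2}$. Choosing $L=L(E):=\ceil{\sqrt{2C_0/(E-E_0)}}$ and $\beta:=(E-E_0)/2$, the event $\{W_\omega\le\beta\text{ on }\Lambda_L\}$ forces $E_1(H_\omega^{\Lambda_L,D})\le E_0+C_0L^{-2}+\beta\le E$, so the Dirichlet side of the sandwich gives $N(E)\ge\abs{\Lambda_L}^{-1}\Prob\{W_\omega\le\beta\text{ on }\Lambda_L\}$. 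Since the $k$-th bump is supported in $\ball{C_+^2}{k}$ and at most $N_0=N_0(C_+,d)$ of these meet at any point, the event that $\norm[L^\infty]{u_{\lambda_k}}\le\beta/N_0$ for each of the at most $CL^d$ indices $k$ with $\dist(k,\Lambda_L)\le C_+^2$ is contained in $\{W_\omega\le\beta\text{ on }\Lambda_L\}$; these constraints are independent, and for $E$ near $E_0$ the threshold $\beta/N_0$ is $\le\alpha_0$, so the hypothesis bounds each factor by $(\beta/N_0)^\eta$. Hence $N(E)\ge\abs{\Lambda_L}^{-1}(\beta/N_0)^{\eta CL^d}\ge\exp(-C'(E-E_0)^{-d/2}\abs{\ln(E-E_0)})$, using $L^d\le C''(E-E_0)^{-d/2}$ and absorbing the polynomial prefactor. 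Taking $\ln$ twice and dividing by $\ln(E-E_0)<0$ gives $\liminf_{E\dnto E_0}\ln(-\ln N(E))/\ln(E-E_0)\ge-d/2$; together with the upper bound and $\liminf\le\limsup$ this makes the limit exist and equal $-d/2$.

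Once $\bZ^d$-ergodicity is in place, the existence statement and the bracketing inequalities are standard, so I expect the genuine work to be elsewhere: matching (or cheaply bridging) the boundary conditions of the box operator of \cref{thm:Intro-Lif} with the Neumann restriction needed in the upper bound, and, in the lower bound, establishing the uniform estimate $E_1(-\Laplace^{\Lambda_L,D}+\Vper)\le E_0+O(L^{-2})$ and keeping track of the overlapping, independently dilated bumps. Neither is deep, but both require care with the dependence on $L$ (equivalently, on $E-E_0$).
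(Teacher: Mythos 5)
Your overall architecture — ergodicity for existence, Dirichlet/Neumann-type bracketing, the Lifshitz probability bound of \cref{thm:Intro-Lif} for the upper half, and a trial-function argument plus the polynomial smallness hypothesis for the lower half — matches the paper (which derives the theorem from \cref{thmabstract} together with \cref{thmlowerbound}). Your lower-bound argument is in substance the same as \cref{thmlowerbound}, specialised and streamlined: because the statement assumes compact support $\supp u_k\subseteq\ball{C_+}{0}$, the summable-decay bookkeeping in $\ell^\infty(L^p)$ used by \cref{thmlowerbound} collapses to the direct $L^\infty$ overlap count you give, and the $O(L^{-2})$ Dirichlet trial-state estimate you obtain is exactly \cref{lemmablubb}. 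That half is fine.

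The upper bound has a genuine gap. You propose to use the Neumann side of the bracketing, $N(E)\le\vol{\Lambda_L}^{-1}\E[n^N(E,H_\omega,\Lambda_L)]$, reduce to $\Prob\{E_1(H_\omega^{\Lambda_L,N})\le E\}$, and then invoke \cref{thm:Intro-Lif}, noting parenthetically that a possible mismatch of boundary conditions "shifts the threshold only by $O(L^{-2})$." But \cref{thm:Intro-Lif} (via \cref{thmabstract}) controls the probability with \emph{Mezincescu} boundary conditions, and when $\Vper$ is not constant the Neumann ground-state energy $E_1(\Hper^{\Lambda_L,N})$ lies \emph{below} $E_0$ by $O(L^{-1})$, not $O(L^{-2})$. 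Since the relevant energy window is $E-E_0=O(L^{-2})$, this $O(L^{-1})$ defect destroys the argument: even with $W_\omega\equiv0$ the Neumann box eigenvalue can already be below $E$, so $\Prob\{E_1(H_\omega^{\Lambda_L,N})\le E\}$ need not be small at all. This is precisely the phenomenon that Mezincescu boundary conditions are designed to repair; the paper states this explicitly in \cref{sectionboundaryconditions} ("in the case $\Vper=0$, Neumann boundary conditions work well\ldots otherwise we have to resort to Mezincescu"), and \eqref{eq:groundstates} is the fact $E_1(\Hper^{\Lambda,M})=E_0$ on which the probability estimate rests. The repair is easy and is what the paper does: since $n^M$ is also equivariant and subadditive, \eqref{eq:IDSinfL} gives $N(E)\le\E[N_L^M(E,H_\omega)]$ directly, after which \eqref{eq:NE} and \cref{thmabstract} apply without any bridge between boundary conditions. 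With that substitution your argument closes.
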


If we assume that the random variables are continuously distributed we can conclude Anderson localization for the random breather model.
Let us formulate a simple model where this holds.

\begin{example}[Standard breather model]
\label{exmp:standard-breather}
  Set $u(t,x)=\mu\ifu{tB}(x)$ for $\mu>0$ and $t\in\Icc01$
  where~$B$ denotes the ball or radius one around the origin.
  The random variables~$\lambda_k$, $k \in \bZ^d$ are i.\,i.\,d.\ and distributed according to a bounded density with support in $\Icc01$.
  Set $W_\omega(x)=\sum_{k\in\bZ^d}  u(\lambda_k,x-k)$.
\end{example}

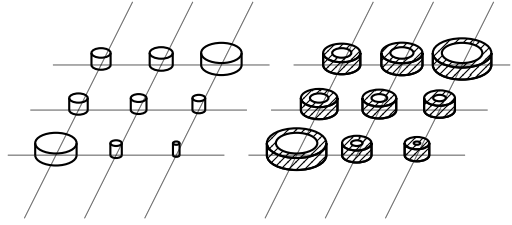
\begin{figure}\centering
\begin{tikzpicture}[scale=0.8, thick]
\pgfmathsetseed{{\number\pdfrandomseed}}

\pgfmathsetmacro{\a}{0.5};
\pgfmathsetmacro{\b}{0.75};

\pgfmathsetmacro{\d}{0.15};

\pgfmathsetmacro{\s}{0.5};

\pgfmathsetmacro{\h}{0.2};

\foreach \x in {-1,0,1}{
  \foreach \y in {-1*\b,0*\b,1*\b}{
  \pgfmathsetmacro{\r}{rand*0.16+0.20}

  \begin{scope}[xshift=0cm,yshift=0cm]
    \draw[] ({\x+\a*\y-\r},\y) arc(180:360:{\r} and {\r*\s});
    \draw[] ({\x+\a*\y-\r},\y) -- ({\x+\a*\y-\r},\y+ \h);
    \draw[] ({\x+\a*\y+\r},\y+\h) -- ({\x+\a*\y+\r},\y);
    \draw[] ({\x+\a*\y},\y+\h) ellipse ({\r} and {\r*\s});

    \draw[opacity = 0.25, ultra thin] ({\x-\a*1.8},-1.8) -- ({\x+\a*1.8},1.8);
    \draw[opacity = 0.25, ultra thin] (-1.8 + \a * \x*\b,\x*\b) -- (1.8 + \a * \x*\b,\x*\b);
  \end{scope}

    \begin{scope}[xshift=4cm,yshift=0cm]
    \draw[] ({\x+\a*\y-\r-\d},\y) arc(180:360:{\r+\d}  and {(\r+\d)*\s} );
    \draw[] ({\x+\a*\y-\r-\d},\y) -- ({\x+\a*\y-\r-\d},\y+ \h );
    \draw[] ({\x+\a*\y+\r +\d},\y+\h) -- ({\x+\a*\y+\r +\d},\y);

    \filldraw[pattern = north east lines]
      ({\x+\a*\y-\r - \d},\y)	 -> 	({\x+\a*\y-\r - \d},\y+\h)	arc(180:360:{\r+\d}  and {(\r+\d)*\s}) -- ({\x+\a*\y+\r + \d},\y) arc (360:180:{\r+\d}  and {(\r+\d)*\s});
    \filldraw[pattern = north east lines, even odd rule]
      ({\x+\a*\y},\y+\h) ellipse ({\r+\d}  and {(\r+\d)*\s} )
      ({\x+\a*\y},\y+\h) ellipse (\r cm and \r * \s cm);
    \draw[opacity = 0.25, ultra thin] ({\x-\a*1.8},-1.8) -- ({\x+\a*1.8},1.8);
    \draw[opacity = 0.25, ultra thin] (-1.8 + \a * \x*\b,\x*\b) -- (1.8 + \a * \x*\b,\x*\b);
    \end{scope}

 }}
\end{tikzpicture}
\caption{shows a realization of a standard breather potential $W_\omega$, and a second realization where all random variables are increased by the same amount.}
\end{figure}

\begin{thm}
  For $H_\omega:=\Hper+W_\omega$
  with~$W_\omega$ as in Example \ref{exmp:standard-breather},
  there exists $E'>\min\sigma(H_\omega)$ such that
  \begin{equation*}
    \Icc{\min\sigma(H_\omega)}{E'}\isect\sigma(H_\omega)
    \ne\emptyset
    \quad\text{ and }\quad
    \Icc{\min\sigma(H_\omega)}{E'}\isect\sigma_{c}(H_\omega)
    =\emptyset
    \text.
  \end{equation*}
\end{thm}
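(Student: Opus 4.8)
The statement has a soft half and a hard half; I would dispose of the soft half first. Since the single-site terms $u(\lambda_k,x-k)$ of the standard breather potential depend only on the i.\,i.\,d.\ family $(\lambda_k)_{k\in\bZ^d}$ and $\Vper$ is $\bZ^d$-periodic, the operator $H_\omega=\Hper+W_\omega$ is $\bZ^d$-ergodic; hence $\sigma(H_\omega)$ and its components $\sigma_{\mathrm{pp}},\sigma_{\mathrm{ac}},\sigma_{\mathrm{sc}}$ --- in particular $\sigma_c(H_\omega)=\sigma_{\mathrm{ac}}\cup\sigma_{\mathrm{sc}}$ --- coincide, for a.e.\ $\omega$, with non-random closed sets. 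From $W_\omega\ge0$ we have $H_\omega\ge\Hper\ge E_0:=\inf\sigma(\Hper)$, so $H_\omega$ is bounded below and $\min\sigma(H_\omega):=\inf\sigma(H_\omega)$ is a genuine (attained) minimum lying in $\sigma(H_\omega)$; thus $\min\sigma(H_\omega)\in\Icc{\min\sigma(H_\omega)}{E'}\isect\sigma(H_\omega)$ for \emph{every} $E'>\min\sigma(H_\omega)$, which already settles the first assertion. By ergodicity $\min\sigma(H_\omega)$ is a.s.\ a fixed value, which the identification lemma pins down as $E_0$; so the interval to aim at is of the form $\Icc{E_0}{E'}$, and everything of substance is in showing that the spectrum of $H_\omega$ there is a.s.\ pure point (spectral/Anderson localization).

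For this localization I would run a multi-scale analysis on $I=\Icc{E_0}{E'}$ with $E'$ close to $E_0$, feeding in two probabilistic inputs. The first is an initial length-scale estimate extracted from the Lifshitz bound: one checks that Example~\ref{exmp:standard-breather} falls under Theorem~\ref{thm:Intro-Lif} (here $C_+=1$, and its non-degeneracy and smallness hypotheses follow from the properties of the bounded density of $\lambda_k$), from whose proof one reads off, at a fixed initial scale $L_0$ and for $E$ in a window $\Icc{E_0}{E_0+c/L_0^2}$, a bound $\Prob\{E_1(H_\omega^{L_0})\le E\}\le\exp(-C' L_0^d)$ that beats every power of $L_0$; on the complementary event $E$ lies in a spectral gap of the box operator $H_\omega^{L_0}$ of width $\gtrsim L_0^{-2}$, and a Combes--Thomas estimate turns this gap into the decay of the boundary-to-center resolvent of $H_\omega^{L_0}$ that the induction needs as an anchor. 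The second input is a Wegner estimate, for which I would invoke \cite{NakicTTV-15,NakicTTV-18b}; it applies because the $\lambda_k$ are absolutely continuously distributed with bounded density and the breather single-site terms $u(\lambda,x)=\mu\,\mathbf 1_{\lambda B}(x)$ are monotone in $\lambda$ with the required covering property, and it yields the Lipschitz continuity of $E\mapsto\Prob\{\dist(E,\sigma(H_\omega^{L}))\le\eta\}$, with the volume dependence, needed for the induction step.

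Feeding these two inputs, together with the standard deterministic ingredients (a priori Combes--Thomas bounds, the geometric resolvent inequality, a polynomial bound on the number of low-lying eigenvalues of box Hamiltonians), into a multi-scale induction --- in the form of \cite{Stollmann-01}, or, to accommodate the weak initial decay, the bootstrap scheme of \cite{GerminetK-04} --- should yield, for a.e.\ $\omega$, (sub)exponential decay of every polynomially bounded generalized eigenfunction of $H_\omega$ with energy in $I$; by the Schnol-type eigenfunction-expansion argument this forces the spectral measure of $H_\omega$ restricted to $I$ to be pure point, i.e.\ $\sigma_c(H_\omega)\isect\Icc{\min\sigma(H_\omega)}{E'}=\emptyset$ for a.e.\ $\omega$, which is the second assertion (and the same $E'$ served the first). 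I expect the main obstacle to be the passage from Theorem~\ref{thm:Intro-Lif} to a serviceable initial length-scale estimate: the Lifshitz bound is quantitative only at the tied scale $L_E\asymp(E-E_0)^{-1/2}$, so at a fixed initial scale $L_0$ one only controls a spectral gap of width $\sim L_0^{-2}$, and the Combes--Thomas suppression over a box of side $L_0$ is correspondingly weak (essentially a constant factor, with a polynomially growing prefactor); one must therefore use a variant of the multi-scale scheme that can start from such a weak estimate and coordinate the choices of $E'$, of $L_0$, and of the scheme's decay and probability parameters. A secondary point needing care is the verification of the hypotheses of Theorem~\ref{thm:Intro-Lif}, and of the Wegner estimate of \cite{NakicTTV-15,NakicTTV-18b}, for the breather single-site profiles.
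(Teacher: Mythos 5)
Your proposal is correct and takes essentially the same route as the paper's Section~5: identify $E_0=\inf\sigma(\Hper)$ as the almost-sure spectral minimum, derive an initial length scale estimate from the Lifshitz bound via Combes--Thomas, combine it with the Wegner estimate of \cite{NakicTTV-15,NakicTTV-18b} (quoted as \cref{thm:wegner}), and run a multi-scale analysis; the first assertion is, as you say, immediate from $\sigma(H_\omega)$ being closed, non-empty and bounded below. One refinement is worth flagging. Your worry that at a fixed initial scale $L_0$ the Lifshitz bound only yields a spectral gap of order $L_0^{-2}$, so that Combes--Thomas suppression over a box of side $L_0$ is essentially a constant (forcing you to reach for the bootstrap MSA), is sidestepped in the paper by decoupling two scales. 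In \cref{ilse} the box has side $L=\ell^\kappa$ with $\kappa>2$: Mezincescu bracketing decomposes $\Lambda_L$ into cells of side $\ell$, the probability estimate of \cref{thmabstract} is applied on each cell (giving a gap $\gtrsim\ell^{-2}=L^{-2/\kappa}$ with probability $\ge1-2^dL^{(1-1/\kappa)d}\exp(-cL^{d/\kappa})$), but Combes--Thomas is then applied on the \emph{full} box over a distance $\delta\sim L$, so the resolvent decay is $\exp(-c'L^{1-2/\kappa})$ --- genuinely stretched-exponential. Hence a standard MSA suffices, and the paper in fact obtains the stronger conclusion of strong Hilbert--Schmidt dynamical localization on $\Icc{E_0}{E'}$.
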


Our Lifshitz bounds are also used in \cite{TaeuferV-20}
to prove Anderson and dynamical localization
for alloy type potentials with minimal support conditions.

\subsection{History}
The most prominently studied random potential (on continuum space) is the alloy type model
\begin{equation*}
  V_\omega(x)
  =\sum_{k\in\bZ^d}\lambda_k(\omega)u(x-k)
\end{equation*}
for appropriate single site potentials~$u$
and i.\,i.\,d.\ random variables~$\lambda_k$, $k\in\bZ^d$.
The linear coupling of the random variables to the translated single site potentials facilitates the analysis for such models.
Indeed, pioneering work on Anderson localization in the continuum concentrated on such models
and the literature devoted to them is vast. We do not even attempt to give an account of it but refer to monographs,
e.\,g.~\cite{Stollmann-01,Veselic-08,AizenmanW-15}.
Specifically, results on Lifshitz tails for the alloy type and other random potentials are surveyed in
\cite{KirschM-07}.

More recently attention turned to models where the influence of the random variables $\lambda_k$ on the Hamiltonian is more involved.
Such models arise naturally, e.\,g.~when studying electromagnetic Schr\"odinger operators with random magnetic fields,
cf.~e.\,g.\ \cite{Ueki-94,Ueki-00,HislopK-02,KloppNNN-03,Ueki-08,Bourgain-09,ErdoesH-12a,ErdoesH-12c,ErdoesH-12b}.
Non-linear dependence enters naturally also, if the underlying geometry is random.
This is the case for random waveguide Laplacians,
see e.\,g.~\cite{KleespiesS-00,BorisovV-11,BorisovV-13,BorisovGV-16,Borisov-17,KirschKG-18,Najar-19},
Laplace-Beltrami operators with random metrics,
cf.~e.\,g.~\cite{LenzPV-04,LenzPPV-08,LenzPPV-09},
or random displacement models,
see~e.\,g.~\cite{Klopp-93,BakerLS-08,GhribiK-10,KloppLNS-12}.

Random breather models were introduced in \cite{CombesHM-96} and a
non-void Wegner estimate was proven in \cite{CombesHN-01}.
However, regularity conditions that have been imposed in the literature on
breather potentials are tricky and sometimes lead to empty theorems,
see the discussion and Lemma A.5 in the Appendix of \cite{NakicTTV-18b}.
In the last mentioned paper Wegner estimates are proven for a
quite wide class of random breather models.

As mentioned before, Lifshitz tails for breather potentials have been studied before in \cite{KirschV-10}. The proof mimics to a large extent the proof for alloy type models
and for this requires restrictive regularity assumptions, excluding in particular
very natural scenarios, e.g.~the standard breather model spelled out in Example \ref{exmp:standard-breather}. This particularly simple and appealing model has been
treated in \cite{Veselic-07,SchumacherV-17} in one-dimensional and multi-dimensional space,
respectively.

In a parallel development, various authors started to study random operators depending on a sequence of
variables $\lambda_k$, $k \in \bZ^d$, which are not ergodic with respect to the action of the group of translations
$\bZ^d$. This includes the works
\cite{RojasMolina-12,RojasMolinaV-13,Klein-13,GerminetMRM-15,NakicTTV-18b,MuellerRM-20}.

Finally, let us compare the present paper with
\cite{KirschM-83a}. This work studies the approximability of the IDS by a macroscopic limit and
its Lifshitz behaviour at the bottom of the spectrum using the theory of Large Deviations.
The class of potentials studied there is more abstractly described than our models.
Some of the theorems spelled out in \cite{KirschM-83a} allow for potentials with long range correlations,
merely a mixing condition is required, cf.~assumption (B) there.
On the other hand, \cite{KirschM-83a} does not allow for a periodic background potential and non-ergodic models.
Theorem 4 and 5 there provide an upper Lifshitz bound the mentioned wide class of (correlated) random fields at energy $E=0$.
However, it is not established that this energy is indeed the bottom of the spectrum, which we provide in our Lemma \ref{upperbound}.

Our proof is inspired by \cite{KirschM-83a} regarding the use of Thirring's inequality.
However, our proof is much more explicit, which is for instance manifested in the use of quantitative concentration inequalities
rather than Large Deviations Theory.

The rest of the paper is organized as follows:
We define our general model in \S \ref{s:general-setup},
prove upper Lifshitz bounds for breather potentials in \S \ref{s:breather},
extend the proof to the general class of potentials we consider in \S \ref{s:reduction},
provide complementary lower bounds in \S \ref{s:lower-IDS-bound},
and complete the paper with a \S \ref{s:localization}
on localization for a class of breather models.

\section{General setup}
\label{s:general-setup}
\subsection{Random \Schroedinger\ operators and the IDS}\label{subsect:IntroIDS}

We consider \Schroedinger\ operators on $L^2(\bR^d)$
with a random, $\cL$-ergodic potential for a cocompact lattice $\cL=\M\bZ^d$
with a real invertible $d\times d$ matrix~$\M$ with positive determinant.
More precisely, we fix a measurable space~$(\Omega_0,\cA_0)$
and a jointly measurable \emph{single site potential}
$u\from\Omega_0\times\bR^d\to\bR$.
With the notiation $u_\lambda:=u(\lambda,\argmt)\from\bR^d\to\bR$,
$\lambda\in\Omega_0$, it becomes obvious that~$\Omega_0$
serves as an index set for a whole family of single site potentials.

We combine the single site potentials randomly on~$\bR^d$.
To this end, we use the canonical probability space $(\Omega,\cA,\Prob)$
with $\Omega:=\bigotimes_{\cL}\Omega_0$
and an i.\,i.\,d.\ family of random variables
$\lambda_k\from\Omega\to\Omega_0$, indexed by $k\in\cL$, via
\begin{equation}\label{e-random-potential}
  W_\omega\from\bR^d\to\bR\textq,
  W_\omega(x)=\sum_{k\in\cL}u_{\lambda_k(\omega)}(x-k)
  \qquad(\omega\in\Omega)\text.
\end{equation}

We assume that there exists $p>\max\{2,d/2\}$ such that
$W_\omega\in L_{\mathrm{loc,unif}}^p(\bR^d)$ uniformly in~$\omega$, \ie
\begin{equation}\label{eq:Wlocp}
  \adjustlimits\sup_{\omega\in\Omega}\sup_{x\in\bR^d}\norm[p]{W_\omega\ifu{\cD+x}}
  <\infty\text,
\end{equation}
where $\cD:=\M\Ioo{-\frac12}{\frac12}^d$.
By an application of the Kato--Rellich theorem,
see \eg\ \cite[Theorem~XIII.96]{ReedS-78},
for an $\cL$-periodic potential~$\Vper\in L^\infty(\bR^d,\bR)$,
the operators
\begin{equation}\label{e-Hper-Homega}
  \Hper:=-\Laplace+\Vper
  \qtextq{and}
  H_\omega:=\Hper+W_\omega
\end{equation}
are self-adjoint on the domain $\dom(\Laplace)$ of~$\Laplace$
and lower bounded uniformly in~$\omega\in\Omega$.

The joint measurability of the single site potential implies
the measurablilty of the operator family $(H_\omega)_{\omega\in\Omega}$,
cf.~\cite{Kirsch-Martinelli-1982-Crelle}.
Moreover, $(H_\omega)_{\omega\in\Omega}$
forms an \emph{ergodic family of operators} in the following sense.
There is an ergodic $\cL$-action $\vartheta\from\cL\times\Omega\to\Omega$
on $(\Omega,\cA,\Prob)$, which satisfies
\begin{equation*}
  H_{\vartheta(x,\omega)}=U(x)^{-1}H_\omega U(x)
  \qquad(x\in\cL,\omega\in\Omega)\text,
\end{equation*}
where $U\from\cL\to\cB(L^2(\bR^d))$, $(U(z)f)(x)=f(x+z)$,
is the unitary representation of~$\cL$ on $L^2(\bR^d)$ acting by translation.
For any ergodic operator family there exists a closed set $\Sigma\subseteq\bR$
and an event $\Omega'\in\cA$ of full probability, such that for all
$\omega\in\Omega'$, the spectrum of~$H_\omega$ coincides with~$\Sigma$,
cf.~\cite{Kirsch-Martinelli-1982-Crelle}.

For the definition of the integrated density of states (IDS)
$N\from\bR\to\bR$ for $(H_\omega)_{\omega\in\Omega}$ we follow
\cite{Pastur-80,Kirsch-Martinelli-1982-JPhysA}.
Denote for $L>0$ and $x\in\bR^d$
\begin{multline*}
  \Lambda_L:=\M\Ioo[\big]{-(L+\tfrac12)}{L+\tfrac12}^d\textq,
  \Lambda_L(x):=\Lambda_L+x\text{, and}\\
  \cF:=\{\Lambda_L(x)\mid x\in\cL,L\in\bN\}\text.
\end{multline*}
Neumann~($N$), Dirichlet~($D$), periodic~($P$) and Mezincescu~($M$)
boundary conditions, a specific choice of Robin boundary conditions,
see \cref{sectionboundaryconditions},
give rise to self-adjoint restrictions $H^{\Lambda,\sharp}$,
$\sharp\in\{N,D,P,M\}$, of a self-adjoint operator~$H$ on~$\bR^d$
to the region~$\Lambda\in\cF$.
Due to~\eqref{eq:Wlocp},
the Kato-Rellich Theorem, see \eg\ \cite[Theorem~X.12]{ReedS-75},
shows that the domain of selfadjointness of~$H^{\Lambda,\sharp}$
is the same as the one of~$\Laplace^{\Lambda,\sharp}$
for all $\sharp\in\{D,N,P,M\}$.
We defer detailed definitions to \cref{sectionboundaryconditions}.
We abbreviate $\Laplace^{L,\sharp}:=\Laplace^{\Lambda_L,\sharp}$
and $H^{L,\sharp}:=H^{\Lambda_L,\sharp}$.

It is well known, see \cite{Kirsch-Martinelli-1982-JPhysA,Veselic-08},
that the finite volume restrictions of~$H_\omega$, $\omega\in\Omega$,
have compact resolvents, so that their spectrum is purely discrete.
The eigenvalue counting functions
\begin{equation*}
  n^{\sharp}(E,H_\omega,\Lambda)
    :=\Tr\bigl(\ifu{\Ioc{-\infty}E}(H_\omega^{\Lambda,\sharp})\bigr)
\end{equation*}
and its normalized versions
\begin{equation*}
  N^{\sharp}(E,H_\omega,\Lambda)
    :=\vol{\Lambda}^{-1}n^{\sharp}(E,H_\omega,\Lambda)
  \text,
\end{equation*}
are thereby well defined for
$\omega\in\Omega$, $E\in\bR$, $\Lambda\in\cF$ and $\sharp\in\{N,D,P,M\}$.
Again we write briefly
$n_L^{\sharp}(E,H_\omega):=n^{\sharp}(E,H_\omega,\Lambda_L)$,
$L\in\bN$, and analogously~$N_L^\sharp$.

The eigenvalue counting functions
are \emph{equivariant},
\ie\ for all $k\in\cL$, $E\in\bR$, $\Lambda\in\cF$,
$\omega\in\Omega$ and $\sharp\in\{D,N,P,M\}$ we have
\begin{equation*}
  N^\sharp(E,H_{\vartheta(k,\omega)},\Lambda)
    =N^\sharp(E,H_\omega,\Lambda+k)\text.
\end{equation*}
Moreover, as we will see in \cref{sectionboundaryconditions},
$n^N$ and $n^M$ are \emph{subadditive},
\ie\ for any $\Lambda\in\cF$
given as a finite disjoint union $\Lambda=\Union_j\Lambda_j$
of cubes $\Lambda_j\in\cF$,
\begin{equation*}
  n^\sharp(E,H_\omega,\Lambda)
    \le\sum\nolimits_jn^\sharp(E,H_\omega,\Lambda_j)
    \qquad(\sharp\in\{N,M\})
\end{equation*}
holds true.
Together with the ergodicity of $(H_\omega)_{\omega\in\Omega}$,
the superadditive ergodic theorem of \cite{AkcogluK-81} implies
that there exists for each $E\in\bR$
an event $\Omega_E\in\cA$ of probability~$1$,
such that for all $\omega\in\Omega_E$
\begin{equation*}
  \lim_{L\to\infty}N_L^\sharp(E,H_\omega)
    =\inf_{L\in\bN}\E[N_L^\sharp(E,H_{\argmt})]
    \qquad(\sharp\in\{N,M\})\text.
\end{equation*}
Analogously, $n^D$ is superadditive,
and, w.\,l.\,o.\,g.,\ for the same event~$\Omega_E$ we have
\begin{equation*}
  \lim_{L\to\infty}N_L^D(E,H_\omega)
    =\sup_{L\in\bN}\E[N_L^D(E,H_{\argmt})]
    \qquad(\omega\in\Omega_E,E\in\bR)\text,
\end{equation*}
cf.~\cite{Kirsch-Martinelli-1982-JPhysA}.

We are now in position to define the IDS of
$(H_\omega)_{\omega\in\Omega}$ in two steps.
For $\sharp\in\{D,N,M\}$ and
$\omega\in\widetilde\Omega:=\Isect_{E\in\bQ}\Omega_E$, the function
\begin{equation*}
  \widetilde N^\sharp\from\bQ\to\bR\textq,
  \widetilde N^\sharp(E'):=\lim_{L\to\infty}N_L^\sharp(E',H_\omega)
\end{equation*}
is well-defined and non-decreasing.
We extend it to all of~$\Omega$ in such a way that is becomes independent of
$\omega\in\Omega$.
The IDS $N\from\bR\to\bR$ of~$(H_\omega)_{\omega\in\Omega}$
is the right continuous version of~$\widetilde N^\sharp$:
\begin{equation*}
  N(E):=\lim_{\bQ\ni E'\dnto E}\widetilde N^\sharp(E')
    =\inf\bigl\{\widetilde N^\sharp(E')\mid E'\in\bQ\isect\Ioo E\infty\bigr\}
    \text.
\end{equation*}

\begin{remark}[Independence of boundary conditions]
  As indicated by the notation, the IDS~$N$
  is independent of the choice of boundary conditions.
  This can be infered from
  \cite{Kirsch-Martinelli-1982-JPhysA,HundertmarkS-04} as follows.
  See also \cite{HupferLMW-01b,DoiIM-01}.
  Since Neumann and Dirichlet boundary conditions bracket
  Mezincescu boundary conditions, it suffices to show
  \begin{equation*}
    N_L^N(E,H_\omega)-N_L^D(E,H_\omega)\xto{L\to\infty}0
    \qquad\text{for a.\,a.~$\omega$ and all~$E\in\bR$.}
  \end{equation*}
  This is proved in \cite[Theorem~3.3]{Kirsch-Martinelli-1982-JPhysA}
  under the additional assumption that the Laplace transform
  \begin{equation*}
    \E\bigl[\Tr\exp(-t_0(-\Laplace^{N,\cD}+q(\Vper+W_{\argmt})))\bigr]<\infty
  \end{equation*}
  for some $q>1$, $t_0>0$.
  Here, $\Tr$ denotes the trace of operators.
  If we split $\Vper=V_+-V_-$, $V_+,V_-\ge0$, by \cite{HundertmarkS-04},
  the above condition is satisfied as soon as
  $V_++W_\omega\in L_{\mathrm{loc}}^1(\cD)$,
  $V_-$ is relatively form bounded with respect to~$-\Laplace^{N,\cD}$, and
  \begin{equation*}
    \Tr\exp(\Laplace^{N,\cD}-2V_-)<\infty\text.
  \end{equation*}
  But all this follows from $\Vper\in L^\infty(\bR^d)$, \eqref{eq:Wlocp},
  and \cite[Proposition~2.1]{Kirsch-Martinelli-1982-JPhysA}.
\end{remark}

The right continuity of~$N$ implies
\begin{equation*}
  N(E)\ge\inf_{L\in\bN}\E[N_L^\sharp(E,H_{\argmt})]
    \qquad(\sharp\in\{N,M\})
\end{equation*}
for all $E\in\bR$.
Indeed, for all $\ve>0$ we find an $E'\in\bQ\isect\Ioo E\infty$
and an $L\in\bN$ with
\begin{equation*}
  N(E)+2\ve
    \ge\widetilde N^\sharp(E')+\ve
    \ge\E[N_L^\sharp(E',H_{\argmt})]
    \ge\E[N_L^\sharp(E,H_{\argmt})]\text.
\end{equation*}

Actually, for every continuity point $E\in\bR$ of~$N$ we have
\begin{equation}\label{eq:IDSinfL}
  N(E)=\inf_{L\in\bN}\E[N_L^\sharp(E,H_{\argmt})]
    \qquad(\sharp\in\{N,M\})\text.
\end{equation}
Indeed, by continuity of~$N$ in~$E$, there exists for all
$\ve>0$ a $\delta>0$ such that for all $L\in\bN$
and $E',E''\in\bQ$ such that $E-\delta<E'<E''<E$, and $\sharp\in\{N,M\}$,
\begin{align*}
  N(E)-\ve&
    \le N(E')
    \le\widetilde N^\sharp(E'')
    \le\E[N_L^\sharp(E'',H_{\argmt})]
    \le\E[N_L^\sharp(E,H_{\argmt})]\text.
\end{align*}
By an analogous argument we have for continuity points $E\in\bR$ of~$N$
\begin{equation}\label{eq:IDSsupL}
  N(E)=\sup_{L\in\bN}\E[N_L^D(E,H_{\argmt})]\text.
\end{equation}
Note, that in \cite{BourgainK-13} the continuity of~$N$ on all of $\bR$
is proved for $d\in\{1,2,3\}$ and bounded~$\Vper$ and $W_\omega$.
For specific types of~$W_\omega$, Wegner estimates are available,
implying the continuity of the IDS,
see \eg\ \cite{Veselic-08} and references therein.
For breather potentials this is discussed specifically in
\cite{TaeuferV-15,NakicTTV-18b}.

\subsection{Boundary Conditions}\label{sectionboundaryconditions}
We have used above that the eigenvalue counting functions
are equivariant and sub- resp.\ superadditive.
These properties are inherited from the corresponding properties
of the respective operator family
$(H_\omega^{\Lambda,\sharp})_{\omega\in\Omega,\Lambda\in\cF}$.
The latter is \emph{equivariant}, if
$U(x)^{-1}H_\omega^{\Lambda+x,\sharp}U(x)
  =H_{\vartheta(x,\omega)}^{\Lambda,\sharp}$,
for all $x\in\cL$, all $\Lambda\in\cF$ and almost all~$\omega\in\Omega$.
All boundary conditions we consider lead to equivariant operator families.

With Dirichlet boundary conditions, the family
$(H_\omega^{\Lambda,D})_{\omega\in\Omega,\Lambda\in\cF}$
is \emph{subadditive}, meaning that for all disjoint unions
$\Lambda=\Union\Lambda_j\in\cF$ of cubes $\Lambda_j\in\cF$, we have
\begin{equation*}
  H_\omega^{\Lambda,D}
    \le\Directsum\nolimits H_\omega^{\Lambda_j,D}\text,
\end{equation*}
see \cite[p.~270, Proposition~4]{ReedS-78}.
\emph{Superadditivity} is defined with the opposite inequality
and applies for Neumann boundary conditions:
\begin{equation*}
  \Directsum\nolimits H_\omega^{\Lambda_j,N}
    \le H_\omega^{\Lambda,N}\text.
\end{equation*}

Equivariance of~$N^\sharp$ is implied by equivariance of~%
$(H_\omega^{\Lambda,\sharp})_{\omega\in\Omega,\Lambda\in\cF}$.
Superadditivity of $(H_\omega^{\Lambda,\sharp})$
implies subadditivity of~$n^\sharp$, and subadditivity of
$(H_\omega^{\Lambda,\sharp})$ implies superadditivity of~$n^\sharp$.
This is because the smaller operator does not have less eigenvalues
below a given threshold.

In the case~$\Vper=0$, Neumann boundary conditions work well for our purposes.
Otherwise we have to resort to Mezincescu boundary conditions.
Like Neumann boundary conditions they lead to equivariance and superadditivity.
But they additionally preserve the ground state energy of the periodic operator:
$\inf\sigma(\Hper^{\Lambda,M})=\inf\sigma(\Hper)$, $\Lambda\in\cF$.

Following \cite{Mezincescu-87,KirschW-05,KirschW-06,KirschV-10},
we define Mezincescu boundary conditions as Robin boundary conditions
with a specific function
$\rho_\Lambda\in L^\infty(\partial\Lambda,\sigma_\Lambda)$,
where $\sigma_\Lambda$ is the surface measure on the boundary~$\partial\Lambda$ of~$\Lambda$.
This means, $-\Laplace^{\Lambda,M}$
is the operator associated with the sesquilinear form
\begin{equation}\label{eq:Mezincescu-form}
  (\varphi,\psi)\mapsto\Int_\Lambda\cc{\grad\varphi(x)}\grad\psi(x)dx
    +\int_{\partial\Lambda}
      \cc{\varphi(x)}\psi(x)\rho_\Lambda(x)\,\sigma_\Lambda(\dx)
\end{equation}
with the Sobolev space~$H^1(\Lambda)$ as its form domain.
Here and in the following, we use the same name for a function on~$\Lambda$
and for its trace on~$\partial\Lambda$.
The domain of~$-\Laplace^{\Lambda,M}$ turns out to be the set of
$\varphi\in H^2(\Lambda)$ which satisfy
$\rho_\Lambda\varphi+\frac{\partial\varphi}{\partial n}=0$
on~$\partial\Lambda$,
where $\frac\partial{\partial n}$ is the outer normal derivative.
Also, on its domain, $-\Laplace^{\Lambda,M}$
acts as usual as the negative of the sum of the second derivatives.
The details for the case of Neumann boundary conditions $\rho_\Lambda=0$
can be found found in \eg\ \cite[section 10.6.2]{Schmuedgen-12}.
The general case $\rho_\Lambda\in L^\infty(\Lambda)$
can be established analogously.

Note that for all $\rho_\Lambda\in L^\infty(\partial\Lambda)$,
we have
\begin{equation}\label{eq:D_ge_M}
  H_\omega^{\Lambda,D}\ge H_\omega^{\Lambda,M}
\end{equation}
in the form sense, where
$H_\omega^{\Lambda,M}:=-\Laplace^{\Lambda,M}+\Vper+V_\omega$.
In fact, the form which defines the Dirichlet Laplace operator
is the restriction of~\eqref{eq:Mezincescu-form} to $H_0^1(\Lambda)$.

Mezincescu's choice for the function~$\rho_\Lambda$ is constructed as follows.
Note that the restriction $\Hper^{\cD,P}$ of~$\Hper$
to $\cD=\M\Ioo{-\frac12}{\frac12}^d$
with periodic boundary conditions has a positive and normalized ground state
$\Psi_\cD\in H^1(\cD)$, $\norm[2]{\Psi_\cD}=1$.
If we extend $\Psi_\cD$ periodically to~$\bR^d$, we obtain
\begin{equation}\label{eqPsi}
  \Psi\in L^\infty(\bR^d)\isect H_{\mathrm{loc}}^2(\bR^d)\text.
\end{equation}
In fact, by \cite[Theorem~B.3.5]{Simon-82c},
$\Psi$ is continuously differentiable with a H\"older continuous gradient.
Note that in the notation of \cite[(A20)]{Simon-82c},
the regularity $\Vper\in L_u^\infty=L_{\mathrm{loc,unif}}^\infty(\bR^n)$,
guarantees $\Vper\in K_d^{(\alpha)}$ for all $\alpha\in\Ioo01$,
cf.~\cite[Definition~(2) on page 467 and Remark~2 on page 468]{Simon-82c}.
Since~$\Hper$ is elliptic and has regular coefficients, Harnack's inequality,
see \cite[Theorem C.1.3]{Simon-82c}, applies:
\begin{equation}\label{eq:harnack}
  0<\Psi_-:=\min\Psi(\bR^d)\le\Psi_+:=\max\Psi(\bR^d)<\infty\text.
\end{equation}
We now define $\rho_\Lambda:=-\frac1\Psi\frac{\partial\Psi}{\partial n}$
on $\partial\Lambda$.
Equivariance of $(H_\omega^{\Lambda,M})_{\omega\in\Omega,\Lambda\in\cF}$
is clear from construction, and super-additivity
is shown in \cite[Proposition~1]{Mezincescu-87}.

Next, we argue that, for all $\Lambda\in\cF$,
\begin{equation}\label{eq:groundstates}
  E_1(\Hper^{\Lambda,P})
  =E_1(\Hper^{\Lambda,M})
  =\inf\sigma(\Hper)
  \text.
\end{equation}
By construction, $\Psi$~satisfies the eigenvalue equation
$\Hper\Psi=E_1(\Hper^{\cD,P})\Psi$.
Since~$\Psi$ is bounded, \cite[Theorem C.4.1]{Simon-82c} implies
$E_1(\Hper^{\cD,P})\in\sigma(\Hper)$.
On the other hand, $\Psi$ is positive,
so that $E_1(\Hper^{\cD,P})\le\inf\sigma(\Hper)$
by \cite[Theorem C.8.1]{Simon-82c}.
Thus we conclude $E_1(\Hper^{\cD,P})=\inf\sigma(\Hper)$.
Furthermore, for all $\Lambda\in\cF$, the function $\ifu\Lambda\Psi$
is in the domains of~$\Hper^{\Lambda,P}$ and of~$\Hper^{\Lambda,M}$
and an eigenvector with the eigenvalue $E_1(\Hper^{\cD,P})$
for both operators.
Again by positivity and \cite[Theorem C.8.1]{Simon-82c},
$\ifu\Lambda\Psi$ is the ground state of~$\Hper^{\Lambda,P}$
and of~$\Hper^{\Lambda,M}$.
That proves~\eqref{eq:groundstates}.

The following proposition is adapted from \cite[Proposition~4]{Mezincescu-87}.
\genericConstant{Cgap}%
\begin{prop}
  The spectral gap between the two lowest eigenvalues of~$\Hper^{L,M}$
  satisfies, for some $\tilde\Cgap>0$ and all $L\in\bN$,
  \begin{equation*}
    E_2(\Hper^{L,M})-E_1(\Hper^{L,M})
      \ge\tilde\Cgap/(2L+1)^2\text.
  \end{equation*}
\end{prop}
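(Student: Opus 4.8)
The plan is to reduce the claim about $\Hper^{L,M}$ to the analogous spectral gap estimate for the Neumann Laplacian $-\Laplace^{L,N}$ on $\Lambda_L$ via the ground state transformation (sometimes called the Doob $h$-transform or Perron--Frobenius conjugation) with the periodic ground state $\Psi$ from \eqref{eqPsi}. Recall that $\ifu{\Lambda_L}\Psi$ is the ground state of $\Hper^{L,M}$ with eigenvalue $E_0 := \inf\sigma(\Hper) = E_1(\Hper^{L,M})$ by \eqref{eq:groundstates}. Conjugating $\Hper^{L,M} - E_0$ by multiplication with $\Psi$ yields a nonnegative operator $A_L := \Psi^{-1}(\Hper^{L,M} - E_0)\Psi$ acting on $L^2(\Lambda_L, \Psi^2\,dx)$; because $\Psi$ is the ground state, $A_L$ annihilates the constant function $\mathbf 1$, and in fact $A_L$ is (up to the weight) a divergence-form operator $-\Psi^{-2}\divergence(\Psi^2\grad\,\cdot\,)$ with the Robin-type boundary condition built into $\rho_\Lambda$ turning into a pure Neumann condition for $A_L$. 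The key point is that $A_L$ is unitarily equivalent to $\Hper^{L,M} - E_0$, so $E_2(\Hper^{L,M}) - E_1(\Hper^{L,M})$ equals the spectral gap of $A_L$ above its lowest eigenvalue $0$, i.e.\ the bottom of the spectrum of $A_L$ restricted to the orthogonal complement of constants in $L^2(\Lambda_L,\Psi^2\,dx)$.

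\textbf{Second step:} estimate that gap from below by a weighted Poincaré inequality. For $\varphi \in H^1(\Lambda_L)$ with $\int_{\Lambda_L}\varphi\,\Psi^2\,dx = 0$ we need
\begin{equation*}
  \int_{\Lambda_L} |\grad\varphi|^2\,\Psi^2\,dx
    \ge \frac{c}{(2L+1)^2}\int_{\Lambda_L}|\varphi|^2\,\Psi^2\,dx .
\end{equation*}
Since $0 < \Psi_- \le \Psi \le \Psi_+ < \infty$ by the Harnack bound \eqref{eq:harnack}, the weight $\Psi^2$ is uniformly comparable to $1$, so this reduces, at the cost of the constant $(\Psi_-/\Psi_+)^2$, to the \emph{unweighted} Neumann Poincaré inequality on the cube $\Lambda_L$ — except that the mean-zero condition is with respect to $\Psi^2\,dx$ rather than Lebesgue measure. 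One handles this by the standard trick: for any constant $c$, $\int|\varphi - \bar\varphi_{\mathrm{Leb}}|^2\,\Psi^2 \le \int|\varphi - c|^2\,\Psi^2$ is false in general, but $\int |\varphi|^2 \Psi^2 - (\int\varphi\Psi^2)^2/\int\Psi^2 \le \int|\varphi - \bar\varphi_{\mathrm{Leb}}|^2\Psi^2 \le \Psi_+^2 \int|\varphi-\bar\varphi_{\mathrm{Leb}}|^2\,dx$, where $\bar\varphi_{\mathrm{Leb}} = |\Lambda_L|^{-1}\int_{\Lambda_L}\varphi\,dx$; and when $\int\varphi\,\Psi^2 = 0$ the left side is just $\int|\varphi|^2\Psi^2 \ge \Psi_-^2\int|\varphi|^2\,dx$. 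Combining with the classical Neumann Poincaré inequality on a cube of side $\asymp (2L+1)$, namely $\int_{\Lambda_L}|\varphi - \bar\varphi_{\mathrm{Leb}}|^2\,dx \le C_d\,(2L+1)^2\int_{\Lambda_L}|\grad\varphi|^2\,dx$, gives the desired bound with $\tilde\Cgap$ depending only on $d$, $\M$, $\Psi_-/\Psi_+$.

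\textbf{I expect the main obstacle} to be the careful bookkeeping of the ground state transformation at the level of forms and boundary conditions: one must verify that the Robin condition encoded by $\rho_\Lambda = -\Psi^{-1}\partial\Psi/\partial n$ is exactly the one that makes the conjugated form $\int_{\Lambda_L}\cc{\grad\varphi}\grad\psi\,\Psi^2\,dx$ have form domain all of $H^1(\Lambda_L)$ with no boundary term — this is precisely the content of Mezincescu's choice, and it should be extracted cleanly from the identity $\Hper\Psi = E_0\Psi$ together with an integration by parts, but it requires the $H^2_{\mathrm{loc}}$-regularity and continuous differentiability of $\Psi$ from \eqref{eqPsi} to justify the boundary computation. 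A secondary (but routine) point is tracking that the scaling of the Neumann Poincaré constant on $\Lambda_L = \M\,(-(L+\tfrac12),L+\tfrac12)^d$ is $\asymp (2L+1)^2$ uniformly in $L$, which follows by rescaling from the fixed domain $\M\,(-\tfrac12,\tfrac12)^d$ and absorbing the (fixed) distortion by $\M$ into the constant. Alternatively, one can shortcut the whole first step by quoting \cite[Proposition~4]{Mezincescu-87} and only redoing the $L$-dependence of the constant; the statement already advertises that it is \enquote{adapted from} that reference.
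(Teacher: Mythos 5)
Your proof is correct and essentially the same as the paper's: both transform $\Hper^{L,M}-E_0$ by the ground state $\Psi$ (this is exactly how the Mezincescu boundary condition $\rho_\Lambda=-\Psi^{-1}\partial_n\Psi$ collapses to a Neumann one for the conjugated operator), then invoke the Harnack bound $\Psi_-/\Psi_+$ to remove the weight $\Psi^2$, and finish with the $(2L+1)^{-2}$ scaling of the Neumann spectral gap on $\Lambda_L$. The only cosmetic difference is the mean-zero normalization: the paper tests against $\Psi^{-1}$ in the min-max characterization of $E_2$ and thereby obtains the Lebesgue constraint $\int_{\Lambda_L}f\,dx=0$ directly, whereas you orthogonalize against constants in $L^2(\Lambda_L,\Psi^2\,dx)$ and pass to the Lebesgue mean via the usual variance-comparison trick.
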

We simplify this estimate to
\begin{equation}\label{e-Mezgap}
  E_2(\Hper^{L,M})-E_1(\Hper^{L,M})
    \ge\Cgap/L^2
\end{equation}
for all $L\ge1$ with $\Cgap:=\frac1{25}\tilde\Cgap$.
\begin{proof}
  For $\sharp\in\{M,N\}$, the set
  $\cC^{L,\sharp}:=\dom(\Laplace^{L,\sharp})\isect L^\infty(\Lambda_L)$,
  is a core of~$\Laplace^{L,\sharp}$.
  The Kato-Rellich theorem, see \eg\ \cite[Theorem~X.12]{ReedS-75},
  guarantees that the core remains the same for~$\Hper^{L,\sharp}$.
  \par
  As above, we denote the ground state of~$\Hper^{L,M}$ by~$\Psi$.
  We abbreviate $E_1:=E_1(\Hper^{L,M})$.
  Using the fact that the ground state~$\Psi$
  is bounded and continuously differentiable, we calculate for all
  $f\in H^2(\Lambda_L)$
  \begin{equation*}
    \partial_n(f\Psi)+\rho_{\Lambda_L}f\Psi
    =(\partial_nf)\Psi+(\partial_n\Psi+\rho_{\Lambda_L}\Psi)f
    =(\partial_nf)\Psi\text.
  \end{equation*}
  With $0<\Psi\in H^2(\Lambda_2)\isect C^1(\Lambda_L)$,
  we conclude that for all $f\in H^2(\Lambda_L)\isect L^\infty(\Lambda_L)$
  \begin{equation*}
    f\Psi\in\cC^{L,M}
    \iff
    f\in\cC^{L,N}
  \end{equation*}
  The eigenvalue equation implies
  \begin{equation}\label{eq:eigenvalueeq}
    (\Vper-E_1)(f\Psi)=f\Laplace^{L,M}\Psi\text.
  \end{equation}
  Hence, we see that for all $f\in\cC^{L,N}$
  \begin{align*}&
    \spr{f\Psi}{(\Hper^{L,M}-E_1)(f\Psi)}\\&
    \stackrel{\text{\eqref{eq:eigenvalueeq}}}
    =\spr{f\Psi}{(-\Laplace^{L,M})(f\Psi)}-\spr{\abs f^2\Psi}{(-\Laplace^{L,M})\Psi}\\&
    \stackrel{\text{\eqref{eq:Mezincescu-form}}}
    =\int_{\Lambda_L}\bigl(\abs{\nabla(f\Psi)}^2
      -\nabla(\abs f^2\Psi)\nabla\Psi\bigr)\dx\\&
    =\int_{\Lambda_L}\Bigl(\abs{(\nabla f)\Psi+f\nabla\Psi}^2
      -\bigl((\nabla\cc f)f\Psi+\cc f(\nabla f)\Psi
      +\abs f^2\nabla\Psi\bigr)\nabla\Psi\Bigr)\dx\\&
    =\int_{\Lambda_L}\abs{\nabla f}^2\Psi^2\dx
    \text.
  \end{align*}
  The boundary terms in the step that uses~\eqref{eq:Mezincescu-form}
  cancel each other.
  \par
  We can now estimate the spectral gap:
  \begin{align*}&
    E_2(\Hper^{L,M})-E_1(\Hper^{L,M})\\&
    =\sup_{\psi\in L^2(\Lambda_L)}
      \inf_{\substack{\varphi\in\cC^{L,M}\\\spr\psi\varphi=0}}
      \frac{\spr\varphi{(\Hper^{L,M}-E_1)\varphi}}{\spr\varphi\varphi}\\&
    \stackrel{\psi:=\Psi^{-1}}
    \ge\inf_{\substack{f\in\cC^{L,N}\\\int_{\Lambda_L}f\ddx=0}}
      \frac{\spr{f\Psi}{(\Hper^{L,M}-E_1)(f\Psi)}}{\spr{f\Psi}{f\Psi}}\\&
    =\inf_{\substack{f\in\cC^{L,N}\\\int_{\Lambda_L}f\ddx=0}}
      \frac{\int_{\Lambda_L}\abs{\nabla f}^2\Psi^2\dx}{\spr{f\Psi}{f\Psi}}\\&
    \ge\frac{\Psi_-^2}{\Psi_+^2}
      \inf_{\substack{f\in\cC^{L,N}\\\int_{\Lambda_L}f\ddx=0}}
      \frac{\int_{\Lambda_L}\abs{\nabla f}^2\dx}{\spr ff}
    \text.
  \end{align*}
  The last infimum is the spectral gap of the Neumann Laplacian on~$\Lambda_L$.
  A straight forward scaling argument finishes the proof.
  We denote $\tilde x:=(2L+1)^{-1}x$
  and $\tilde f(\tilde x):=f((2L+1)\tilde x)$ and see that
  \begin{align*}
    \int_{\Lambda_L}\abs{(\nabla f)(x)}^2\ddx&
    =\int_{\Lambda_{1/2}}\abs{(\nabla f)((2L+1)\tilde x)}^2(2L+1)^d\dd\tilde x\\&
    =\int_{\Lambda_{1/2}}\frac{\abs{(\nabla\tilde f)(\tilde x)}^2}
      {(2L+1)^2}(2L+1)^d\dd\tilde x\text.
  \end{align*}
  The denominator becomes
  \begin{equation*}
    \spr ff
    =\int_{\Lambda_{1/2}}\abs{\tilde f}^2(2L+1)^d\ddx\text,
  \end{equation*}
  and the infimum satisfies
  \begin{equation*}
    \inf_{\substack{f\in\cC^{L,N}\\\int_{\Lambda_L}f\ddx=0}}
      \frac{\int_{\Lambda_L}\abs{\nabla f}^2\dx}{\spr ff}
    =\frac1{(2L+1)^2}
      \inf_{\substack{f\in\cC^{\frac12,N}\\\int_{\Lambda_{1/2}}f\ddx=0}}
      \frac{\int_{\Lambda_{1/2}}\abs{\nabla\tilde f}^2\dx}{\spr{\tilde f}{\tilde f}}
    \text.
  \end{equation*}
  This finishes the proof.
\end{proof}

\section{Lifshitz tails for the breather model}\label{s:breather}
For the breather model, we consider a specific measurable space
$\Omega_0:=\Icc01$ and a specific single site potential
$u\from\Omega_0\times\bR^d\to\bR$.
Choose a measurable set
$A\subseteq\cD=\M\Ioo{-\frac12}{\frac12}^d\subseteq\bR^d$
of positive Lebesgue measure and a coupling strength~$\mu>0$.
The single site potential is defined via
\begin{equation}\label{breatherSSP}
  u(\lambda,x)
  :=\mu\ifu{\lambda A}(x)
  \text,
\end{equation}
see \cref{fig:breather,fig:genbreather} for illustrations.
\begin{figure}
  \begin{tikzpicture}
    [baseline=0mm
    ,mylabel/.style={fill=white,inner xsep=.1mm,inner ysep=.5mm}
    ]
    \foreach \l/\f/\t/\a in {10/{(-1.4,1)}/1/east%
                            ,6.7/{(-1.7,0)}/{2/3}/east%
                            ,3.3/{(-1.5,-1)}/{1/3}/east%
                            } {
      \node [circle,draw,inner sep=\l mm,outer sep=0mm,fill=white] (n) at (0,0) {};
      \node [anchor=\a,at=\f] (t) {$\ifthenelse{\equal{\t}{1}}{A=}{}\supp u_{\t}$};
      \draw[->] (t) -- (n);
      }
    \draw [fill=black] circle (.3mm)
      node at (-1mm,-2mm) {$0$};
  \end{tikzpicture}
  \caption{Support of single site potential $u_\lambda$
    for different values of $\lambda$
    with circular base set~$A$}
  \label{fig:breather}
\end{figure}%
As in \cref{subsect:IntroIDS},
the random variables $\lambda_k\from\Omega\to\Icc01$, $k\in\cL$,
shall be independent and identically distributed.
We require~$0$ to be in the support of the distribution of~$\lambda_0$:
\begin{equation}\label{supportcondition}
  \forall\ve>0\colon\Prob\{\lambda_0\le\ve\}>0\text,
\end{equation}
and to exclude a trivial case, we assume
\begin{equation}\label{nontrivial}
  \Prob\{\lambda_0=0\}<1\text.
\end{equation}
Note that the distribution of~$\lambda_0$
may but does not have to have an atom at~$0\in\bR$.
Choosing this specific type of single site potential~\eqref{breatherSSP}
in the random potential in~\eqref{e-random-potential} and
the corresponding random \Schroedinger\ operator~$(H_\omega)_{\omega\in\Omega}$
gives rise to the \emph{random breather model}.
\begin{figure}
  \begin{tikzpicture}
    [baseline=0mm
    ,mylabel/.style={fill=white,inner xsep=.1mm,inner ysep=.5mm}
    ]
    \draw [gray] (0,0) -- (5,-2.5);
    \draw [gray] (0,0) -- (4,2);
    \draw [fill=black] circle (.3mm)
      node [label=above:$0$] {}
      node [mylabel] at (-0.1,-0.4) {$\supp u_0$};
    \foreach \l in {1,{2/3},{1/3}} {
      \draw [fill=gray] (5*\l,-2.5*\l)
        .. controls (8*\l,-2.5*\l) and (6*\l,2*\l)  .. (4*\l,2*\l)
        .. controls (6*\l,1*\l)    and (4*\l,-2*\l) .. (5*\l,-2.5*\l);
      \draw node [mylabel] at (5*\l-0.1,-2.5*\l-0.4) {$\supp u_{\l}$};
      }
    \draw node at (5.6,-1.1) {$A$};
  \end{tikzpicture}
  \caption{Support of single site potential $u_\lambda$
    for different values of $\lambda$
    with arbitrary base set~$A$}
  \label{fig:genbreather}
\end{figure}
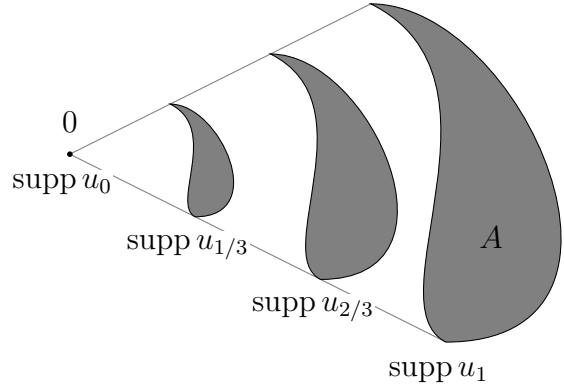%

The central result of this chapter is the following.
\genericConstant{Cfront}%
\genericConstant{Cexp}%
\begin{thm}\label{thmbreather}
  The IDS~$N$ of the random breather model with single site potentials given by
  \eqref{breatherSSP} satisfies a Lifshitz bound, \ie\ %
  $\exists\Cfront,\Cexp>0,E'>E_0\colon\forall E\in\Ioc{E_0}{E'}\colon$
  \begin{equation}\label{e-LifExp}
    N(E)\le\Cfront
    \exp\bigl(-\Cexp(E-E_0)^{-d/2}\bigr)
    \text.
  \end{equation}
\end{thm}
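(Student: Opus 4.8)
The strategy is the classical Lifshitz upper-bound scheme, adapted to the monotone (breather) structure and anchored to the Mezincescu ground state rather than to $0$. First I would reduce the statement on the IDS to a finite-volume eigenvalue bound: by \eqref{eq:IDSinfL} (applied with $\sharp = M$), it suffices to produce, for a length scale $L = L_E$ chosen as a suitable power of $(E-E_0)^{-1/2}$, an estimate of the form
\begin{equation*}
  \E\bigl[N_L^M(E,H_{\argmt})\bigr]
  = \vol{\Lambda_L}^{-1}\,\E\bigl[n_L^M(E,H_{\argmt})\bigr]
  \le \Cfront \exp\bigl(-\Cexp(E-E_0)^{-d/2}\bigr)\text,
\end{equation*}
and since $\vol{\Lambda_L}$ grows only polynomially in $L$, it is enough to bound $\E[n_L^M(E,H_\omega)] \le \Prob\{E_1(H_\omega^{L,M}) \le E\}\cdot(\text{polynomial factor})$ — i.e.\ one really only needs to control the probability that the lowest eigenvalue of the box Hamiltonian with Mezincescu boundary conditions dips below $E$. (A cheap bound on the number of low-lying eigenvalues, e.g.\ via the spectral gap \eqref{e-Mezgap} of the periodic part together with a rough perturbation argument, absorbs the polynomial factor.)

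Next comes the deterministic spectral input. On $\Lambda_L$ with Mezincescu boundary conditions we have $E_1(\Hper^{L,M}) = E_0$ with positive ground state $\Psi$, and a gap $E_2(\Hper^{L,M}) - E_0 \ge \Cgap/L^2$ by the Proposition proved above. Writing $H_\omega^{L,M} = \Hper^{L,M} + W_\omega$ with $W_\omega \ge 0$, first-order perturbation theory / the variational principle gives a lower bound
\begin{equation*}
  E_1(H_\omega^{L,M}) \ge E_0 + c\,\spr{\Psi}{W_\omega\,\Psi} - C\,L^2\,\spr{\Psi}{W_\omega\Psi}^2
\end{equation*}
or, more robustly, one uses that if $\spr{\Psi}{W_\omega\Psi}$ is not too large compared to the gap then $E_1(H_\omega^{L,M}) - E_0$ is comparable to $\spr{\Psi}{W_\omega\Psi}$; here is where I expect to invoke \emph{Thirring's inequality} (as the introduction advertises), which for a non-negative perturbation of a Schr\"odinger operator with a spectral gap yields exactly such a one-sided lower bound on the ground-state shift in terms of the expectation of the perturbation in the unperturbed ground state. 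Thus, on the event $E_1(H_\omega^{L,M}) \le E$ we are forced into $\spr{\Psi}{W_\omega\Psi} \le C(E - E_0)$ for a suitable constant, \emph{provided} $L^2(E-E_0)$ is bounded — which is arranged by the choice $L_E \asymp (E-E_0)^{-1/2}$.

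The probabilistic heart is then to show this is exponentially unlikely. Since $\Psi_- \le \Psi \le \Psi_+$ with $\Psi_- > 0$ by \eqref{eq:harnack}, we have $\spr{\Psi}{W_\omega\Psi} \ge \Psi_-^2 \int_{\Lambda_L} W_\omega \ge \Psi_-^2 \sum_{k \in \Lambda_L \cap \cL} \int_{\cD+k} W_\omega$, a sum of $\sim L^d$ independent non-negative terms, one per lattice site in the box. The event $\spr{\Psi}{W_\omega\Psi}\le C(E-E_0)$ forces all but a $(E-E_0)$-fraction of these site contributions to be small, and the "sometimes not too small" hypothesis — the condition involving $\mu$ guaranteeing $\Prob\{\vol{\{x : u_k((x-k)/\lambda_k)\ge\mu\}}\ge\mu\} \ge \mu$ — says each site contributes at least $\mu^2$ with probability at least $\mu$, independently. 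Hence the probability that the sum stays below $C(E-E_0) \le \mu^2$ (for $E$ close enough to $E_0$) is at most $(1-\mu)^{\#(\Lambda_L\cap\cL)} \le \exp(-c L_E^d) = \exp(-c'(E-E_0)^{-d/2})$. Combining this with the reduction of the first paragraph and collecting constants into $\Cfront,\Cexp$ yields \eqref{e-LifExp}. The main obstacle is the second step: getting a clean, quantitative lower bound on $E_1(H_\omega^{L,M}) - E_0$ in terms of $\spr{\Psi}{W_\omega\Psi}$ that is uniform in $L$ and valid without smallness assumptions on $W_\omega$ beyond what the gap allows — this is precisely why Thirring's inequality (rather than naive perturbation theory) is the right tool, and making its hypotheses match our setup (non-negative $W_\omega \in L^p_{\mathrm{loc,unif}}$, Mezincescu boundary conditions, the explicit gap bound) is the delicate part.
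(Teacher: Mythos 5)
Your high-level route --- reduce via \eqref{eq:IDSinfL} to controlling $\Prob\{E_1(H_\omega^{L,M})\le E\}$, use the Mezincescu ground state $\Psi$ and the gap \eqref{e-Mezgap}, invoke Thirring's inequality, choose $L_E\asymp(E-E_0)^{-1/2}$, and finish with a concentration bound over the $\sim L^d$ independent site contributions --- is exactly the paper's. The first reduction is fine, and indeed simpler than you suggest: since $W_\omega\ge0$ one has $n_L(E,H_\omega)\le n_L(E,\Hper)$, and the latter is uniformly bounded in $L$ for $E\le1$, so no ``rough perturbation argument'' on the eigenvalue count is needed.

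There is, however, a genuine gap in the two central steps. You claim Thirring gives a lower bound on the ground-state shift in terms of the \emph{arithmetic} mean $\spr\Psi{W_\omega\Psi}$, and deduce $\spr\Psi{W_\omega\Psi}\le C(E-E_0)$ on the bad event. Neither claim is correct. Thirring's inequality (\cref{lemma:Thirring,corollary:Thirring}) controls $E_1(H+V)$ from below by $E_1(H)+\spr\psi{V^{-1}\psi}^{-1}$, the \emph{harmonic} mean of $V$ in the state $\psi$, not $\spr\psi{V\psi}$; for the two-valued breather potential these differ drastically (this is precisely why Temple's inequality fails here, cf.\ the ``Temple vs.\ Thirring'' discussion). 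It also requires $V$ to be invertible, which forces the shift $V_\omega:=W_\omega+\gamma_L$ with $\gamma_L\asymp L^{-2}$ that you do not introduce. Working out the harmonic mean for the indicator-valued $W_\omega$ (\cref{lem:XSV}) gives, on $\{E_1(H_\omega^{L,M})\le E\}$, the bound $\tfrac{\gamma_L}{2}S_L\le E$ for the site-\emph{average} $S_L=\setsize{I_L}^{-1}\sum_kX_k$ with $X_k=\int_{\supp u_{\lambda_k}}\abs\Psi^2\dx$. With the optimal scale $L_E\asymp(E-E_0)^{-1/2}$ this reads $S_{L_E}\le\E[X_0]/2$, i.e.\ the average drops to half its expectation --- whereas the sum $\sum_kX_k\approx\setsize{I_{L_E}}S_{L_E}$ that controls $\spr\Psi{W_\omega\Psi}$ actually \emph{grows} like $(E-E_0)^{-d/2}$, so it is never below $C(E-E_0)$ or below a single-site threshold $\mu^2$. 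Consequently your concentration argument (``all sites small'' giving $(1-\mu)^{\setsize{I_L}}$) does not apply. What does work is a Chernov/Bernstein-type large-deviation bound: the empirical mean of $\setsize{I_L}$ independent $[0,1]$-valued variables with a uniform positive lower bound on their expectations falls below half that bound with probability $\le\exp(-cL^d)$ (\cref{LDP,Bernstein}), which with $L=L_E$ gives $\exp(-C(E-E_0)^{-d/2})$.
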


\subsection{Identification of $E_0$}
Before we prove \cref{thmbreather} in \cref{subsectionProofBreather},
we show that the support condition~\eqref{supportcondition}
guarantees that there actually is spectrum at~$E_0$.

\genericConstant{perappEF}%
\begin{lemma}\label{E0inf}
  For the random breather model with \eqref{supportcondition}, we have
  \begin{equation}\label{eq:E0inf}
    E_0=\inf\sigma(H_\omega)
      \qqtext{for\/ $\Prob$-a.\,a.~$\omega\in\Omega$.}
  \end{equation}
\end{lemma}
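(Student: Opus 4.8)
We must show $E_0 = \inf\sigma(H_\omega)$ for almost all $\omega$. One inequality is immediate: since $W_\omega \ge 0$, we have $H_\omega \ge \Hper$, hence $\inf\sigma(H_\omega) \ge \inf\sigma(\Hper) = E_0$ deterministically. So the content is the reverse inequality $\inf\sigma(H_\omega) \le E_0$ $\Prob$-a.s., which by ergodicity is equivalent to showing $E_0 \in \Sigma$, i.e.\ that with positive probability $H_\omega$ has spectrum arbitrarily close to $E_0$.

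The plan is a Weyl-sequence / trial-function argument exploiting the support condition~\eqref{supportcondition}. Fix $\varepsilon > 0$ and $L \in \bN$ large. On the box $\Lambda_L$, the periodic ground state $\ifu{\Lambda_L}\Psi$ (from~\eqref{eqPsi}--\eqref{eq:groundstates}) satisfies $\Hper^{\Lambda_L,P}(\ifu{\Lambda_L}\Psi) = E_0(\ifu{\Lambda_L}\Psi)$. I would modify $\Psi$ to a compactly supported trial function $\varphi_L$ — say $\varphi_L = \chi_L \Psi$ for a smooth cutoff $\chi_L$ supported in $\Lambda_L$ and equal to $1$ on $\Lambda_{L-1}$ — so that $\langle \varphi_L, (\Hper - E_0)\varphi_L\rangle / \|\varphi_L\|^2 \le C/L$ (the error comes only from the commutator with the cutoff on the boundary shell, whose relative volume is $O(1/L)$; here $\Psi$ is bounded above and below by~\eqref{eq:harnack}). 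Now the key probabilistic input: by~\eqref{supportcondition}, for each $k$ we have $\Prob\{\lambda_k \le \delta\} > 0$ for every $\delta > 0$, and by independence the event
\begin{equation*}
  \Omega_{L,\delta} := \{\omega : \lambda_k(\omega) \le \delta \text{ for all } k \in \cL \cap \Lambda_L\}
\end{equation*}
has positive probability. On $\Omega_{L,\delta}$, the potential $W_\omega$ restricted to $\Lambda_{L-1}$ is supported in $\bigcup_{k} (k + \delta A)$, a set of total measure $\le C_d L^d |A| \delta^d$ (using $|\cL \cap \Lambda_L| \le C_d L^d$ and $|\delta A| = \delta^d |A|$). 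Since $u_\lambda = \mu\ifu{\lambda A}$ is bounded by $\mu$, we get $\|W_\omega \varphi_L\|_2^2 \le \mu^2 \int_{\bigcup_k k+\delta A} |\varphi_L|^2 \le \mu^2 \Psi_+^2 C_d L^d |A|\delta^d$, while $\|\varphi_L\|_2^2 \ge \Psi_-^2 |\Lambda_{L-1}| \ge c_d \Psi_-^2 L^d$. Hence on $\Omega_{L,\delta}$,
\begin{equation*}
  \frac{\langle \varphi_L, (H_\omega - E_0)\varphi_L\rangle}{\|\varphi_L\|^2}
  \le \frac{C}{L} + \mu \cdot \frac{\mu \Psi_+^2 C_d |A| \delta^d L^d}{\Psi_-^2 c_d L^d}
  = \frac{C}{L} + C' \delta^d,
\end{equation*}
using $\langle\varphi_L, W_\omega\varphi_L\rangle \le \mu \int_{\bigcup k+\delta A}|\varphi_L|^2$. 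Choosing $L$ large and then $\delta$ small makes the right-hand side $< \varepsilon$.

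Therefore, for this choice of $L, \delta$, with positive probability $H_\omega$ has an element of its spectrum below $E_0 + \varepsilon$; combined with the a.s.\ bound $\inf\sigma(H_\omega) \ge E_0$ and the ergodicity-determined almost-sure spectrum $\Sigma$, we conclude $\inf\Sigma \in [E_0, E_0+\varepsilon]$ for every $\varepsilon > 0$, so $\inf\Sigma = E_0$, which is~\eqref{eq:E0inf}. The main technical obstacle is the trial-function construction: one must verify $\varphi_L = \chi_L\Psi$ lies in the operator domain (or at least the form domain) of $H_\omega^{\Lambda_L}$ — this is where the regularity~\eqref{eqPsi}, i.e.\ $\Psi \in H^2_{\mathrm{loc}}$ with H\"older gradient, and the Kato--Rellich bound from~\eqref{eq:Wlocp} are used — and to control the cutoff commutator $[\,-\Laplace, \chi_L]$ carefully enough to extract the $O(1/L)$ decay. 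A minor point: instead of smooth cutoffs one could equally well use the Mezincescu restriction $\Hper^{\Lambda_L,M}$, whose ground state energy is exactly $E_0$ by~\eqref{eq:groundstates}, and then no boundary error arises at all — this may be the cleanest route and I would likely adopt it, reducing the estimate to just the $C'\delta^d$ term.
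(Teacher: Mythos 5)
Your proof is correct, but it takes a structurally different route from the paper's. You keep the trial-function cube centered at the origin, show that the event $\Omega_{L,\delta}$ (all $\lambda_k$ in the cube at most $\delta$) has positive probability by independence and the support condition~\eqref{supportcondition}, deduce $\Prob\{\inf\sigma(H_\omega)\le E_0+\varepsilon\}>0$, and then upgrade to full probability by invoking the almost-sure constancy of the spectrum $\Sigma$ of the ergodic family (the event is translation invariant, hence $\Prob$-trivial). The paper keeps the argument pathwise instead: Lemma~\ref{BC} uses Borel--Cantelli to produce a single full-measure set $\Omega_{BC}$ such that for every $\omega\in\Omega_{BC}$ and every scale~$L$ there is \emph{some} location $x\in\cL$ at which all $\lambda_k$, $k\in I_L(x)$, are as small as desired, and the trial function $\psi_{x,L}$ is recentered there. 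The paper's version delivers the Rayleigh-quotient bound directly on $\Omega_{BC}$ and does not need the abstract ``a.s.-constant spectrum'' theorem; your version is shorter and in fact mirrors the logic of the paper's own Lemma~\ref{upperbound} in Section~\ref{s:reduction} (positive probability only, because that setting is not assumed ergodic) together with the remark after it. Both routes are valid here because the breather model of Section~\ref{s:breather} has i.i.d.\ couplings.

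One correction to your closing side remark: you cannot avoid the cutoff by using the Mezincescu ground state $\ifu{\Lambda_L}\Psi$. That function is discontinuous across $\partial\Lambda_L$ and hence is not in $H^1(\bR^d)$, the form domain of the whole-space operator $H_\omega$, so it is not an admissible trial function for $\inf\sigma(H_\omega)$. Moreover $\inf\sigma(H_\omega^{\Lambda_L,M})$ does not bound $\inf\sigma(H_\omega)$ from above (Mezincescu/Neumann-type restrictions push eigenvalues down, not up), and the identity $E_1(\Hper^{\Lambda_L,M})=E_0$ from~\eqref{eq:groundstates} concerns the restricted operator only. The cutoff commutator error --- $O(1/L)$ for your sharp transition layer, or $O(1/L^2)$ for the scaled cutoff $\chi(\cdot/L)$ the paper uses in \eqref{eq:molly} --- is therefore unavoidable. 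Apart from this aside, your main chain of estimates (deterministic lower bound from $W_\omega\ge0$, commutator bound, $O(\delta^d)$ bound for the potential term on $\Omega_{L,\delta}$, ergodicity upgrade) is sound.
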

\begin{proof}
  We have $W_\omega\ge0$ for almost all $\omega\in\Omega$.
  Hence by the min-max-principle $\inf\sigma(H_\omega)\ge E_0$.
  \par
  For the reverse inequality,
  we apply the min-max-principle with a test function to~$H_\omega$
  for a.\,a.\ $\omega\in\Omega$.
  We use a smooth cut-off function $\molly{}\in C^\infty(\bR^d,\Icc01)$
  with $\ifu\cD\le\molly{}\le\ifu{\Lambda_{1/2}}=\ifu{\M\Ioo{-1}1^d}$.
  We scale by $L\in\bN$ and shift $x\in\cL$:
  \begin{equation}\label{eq:molly}
    \molly{x,L}(y):=\molly{}\bigl((y-x)/L\bigr)
    \qquad(y\in\bR^d)\text,
  \end{equation}
  so that $\supp\molly{x,L}\subseteq\Lambda_L(x)$.
  Our test function is
  \begin{equation}\label{eq:psixL}
    \psi_{x,L}:=\Psi\cdot\molly{x,L}/\norm[2]{\Psi\cdot\molly{x,L}}
    \in\dom(\Laplace)\text,
  \end{equation}
  with $\Psi$ from \eqref{eqPsi},
  for suitable $x\in\cL$ and $L\in\bN$ to be chosen later.
  We have to bound
  \begin{equation*}
    \norm[2]{(H_\omega-E_0)\psi_{x,L}}
    \le\norm[2]{{(\Hper-E_0)\psi_{x,L}}}+\norm[2]{W_\omega\psi_{x,L}}
    \text.
  \end{equation*}
  \Cref{approxEF,BC} provide a constant $\perappEF>0$
  such that, for $\Prob$-almost all $\omega\in\Omega$
  and all $L\in\bN$, there exists an $x\in\cL$ such that
  \begin{align*}
    \norm[2]{(H_\omega-E_0)\psi_{x,L}}&
      \le(\perappEF+1)/L\text.
  \end{align*}
  This suffices to conclude
  $\inf\sigma(H_\omega)
   =\inf\limits_{\psi\in\dom(H_\omega),\norm[2]\psi=1}\spr\psi{H_\omega\psi}
   \le E_0$.
\end{proof}

The following lemma provides the non--random estimate used in \cref{E0inf}.
\begin{lemma}\label{approxEF}
  There exists a constant~$\perappEF>0$ such that,
  for all $L\in\bN$ and $x\in\cL$
  \begin{equation*}
    \norm[2]{(\Hper-E_0)\psi_{x,L}}
      \le\perappEF/L\text.
  \end{equation*}
\end{lemma}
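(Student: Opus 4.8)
The statement asserts that $\psi_{x,L}=\Psi\cdot\molly{x,L}/\norm[2]{\Psi\cdot\molly{x,L}}$ is an approximate eigenfunction of $\Hper$ at energy $E_0$, with error $O(1/L)$ uniform in the shift $x$. The plan is to exploit that $\Psi$ is an exact (generalized) eigenfunction of $\Hper$ at $E_0$, see~\eqref{eqPsi} and the computation $\Hper\Psi=E_1(\Hper^{\cD,P})\Psi=E_0\Psi$ established before~\eqref{eq:groundstates}, so that the only error comes from the localization cut-off $\molly{x,L}$ hitting the Laplacian. First I would compute, using the product (Leibniz) rule for $-\Laplace$ applied to $\Psi\cdot\molly{x,L}$,
\begin{equation*}
  (\Hper-E_0)(\Psi\molly{x,L})
  = -\Psi\,\Laplace\molly{x,L} - 2\,\grad\Psi\cdot\grad\molly{x,L}
  + \molly{x,L}\,(\Hper-E_0)\Psi
  = -\Psi\,\Laplace\molly{x,L} - 2\,\grad\Psi\cdot\grad\molly{x,L}\text,
\end{equation*}
the last term vanishing since $(\Hper-E_0)\Psi=0$. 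This manipulation is legitimate because $\Psi\in H^2_{\mathrm{loc}}(\bR^d)$ with H\"older continuous gradient by~\eqref{eqPsi} and the discussion following it, and $\molly{}\in C^\infty$.

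Next I would estimate the $L^2$-norm of the right-hand side. By the scaling~\eqref{eq:molly}, $\grad\molly{x,L}(y)=L^{-1}(\grad\molly{})((y-x)/L)$ and $\Laplace\molly{x,L}(y)=L^{-2}(\Laplace\molly{})((y-x)/L)$, and both $\grad\molly{}$ and $\Laplace\molly{}$ are supported in $\Lambda_{1/2}\sm\cD$ (since $\molly{}\equiv1$ on $\cD$ and $\molly{}\equiv0$ outside $\Lambda_{1/2}$), hence are bounded with fixed bounds. Using the global bounds $0<\Psi_-\le\Psi\le\Psi_+<\infty$ from~\eqref{eq:harnack} and the uniform bound on $\grad\Psi$, the triangle inequality gives
\begin{equation*}
  \norm[2]{(\Hper-E_0)(\Psi\molly{x,L})}
  \le \Psi_+\,\norm[2]{\Laplace\molly{x,L}} + 2\,\norm[\infty]{\grad\Psi}\,\norm[2]{\grad\molly{x,L}}
  \le \frac{c_1}{L^2}\abs{\Lambda_L}^{1/2} + \frac{c_2}{L}\abs{\Lambda_L}^{1/2}\text,
\end{equation*}
where $c_1,c_2$ depend only on $\molly{}$, $\Psi_+$ and $\norm[\infty]{\grad\Psi}$. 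Since $\abs{\Lambda_L}=\abs{\det\M}(2L+1)^d$, this is $O(L^{d/2-1})$. Meanwhile the normalization satisfies $\norm[2]{\Psi\molly{x,L}}^2\ge\Psi_-^2\abs\cD\cdot\abs{\det\M}(2L+1)^d$ is of order $L^{d/2}$ from below (using $\molly{x,L}\ge\ifu{\cD+x}$... more precisely $\molly{x,L}=1$ on a shifted copy of $\Lambda_{L-c}$ for $L$ large, giving a lower bound of order $L^{d/2}$). Dividing, the factor $\abs{\Lambda_L}^{1/2}$ cancels and we are left with a bound of order $1/L$; collecting constants yields the claimed $\perappEF/L$ with $\perappEF$ depending only on $d$, $\M$, $\molly{}$, $\Psi_-$, $\Psi_+$ and $\norm[\infty]{\grad\Psi}$, all of which are independent of $L$ and $x$.

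The only mild subtlety — and the step I would be most careful about — is the lower bound on $\norm[2]{\Psi\molly{x,L}}$: one must check it grows like $L^{d/2}$ rather than degenerating, which follows because $\molly{x,L}\equiv1$ on the set $x+L\cdot\{\molly{}=1\}\supseteq x+L\cdot\cD$, a region of volume $\asymp L^d$ on which $\Psi\ge\Psi_->0$; translation-invariance of Lebesgue measure makes this uniform in $x\in\cL$. Everything else is routine Leibniz-rule bookkeeping together with the scaling identities for $\molly{x,L}$, so no genuine obstacle arises.
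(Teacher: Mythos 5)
Your proposal follows essentially the same route as the paper: apply the Leibniz rule to $\Psi\molly{x,L}$, kill the $(\Hper-E_0)\Psi$ term, bound the surviving $-\Psi\Laplace\molly{x,L}-2\grad\Psi\cdot\grad\molly{x,L}$ via the scaling identities for $\molly{x,L}$, and divide by the $L^{d/2}$-growth of $\norm[2]{\Psi\molly{x,L}}$. The one cosmetic deviation is the H\"older split of the cross term---you place $\grad\Psi$ in $L^\infty$ (relying on the elliptic-regularity bound from \cite[Theorem~B.3.5]{Simon-82c}) and $\grad\molly{x,L}$ in $L^2$, whereas the paper does the opposite, putting $\ifu{\Lambda_L(x)}\grad\Psi$ in $L^2$ and $\grad\molly{x,L}$ in $L^\infty$, which needs only $\grad\Psi\in L^2_{\mathrm{loc}}$. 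One arithmetic slip worth fixing: $\molly{x,L}\equiv1$ precisely on $L\cD+x$, a set of volume $L^d\abs{\det\M}$ (not on $\Lambda_{L-c}(x)$, and certainly not on all of $\Lambda_L(x)$), so the crude lower bound should read $\norm[2]{\Psi\molly{x,L}}^2\ge\Psi_-^2L^d\abs{\det\M}$; your displayed $\Psi_-^2\abs\cD\,\abs{\det\M}(2L+1)^d$ carries a spurious extra factor $\abs\cD$ and uses the wrong box size. Cleaner still---and this is what the paper's~\eqref{eq:lowerPsimolly} does---is to exploit $\cL$-periodicity of~$\Psi$ directly: $\norm[2]{\Psi\molly{x,L}}^2\ge\int_{L\cD+x}\abs{\Psi}^2=L^d\norm[2]{\Psi\ifu\cD}^2=L^d$, no $\Psi_-$ needed. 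With that correction your argument is sound.
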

\begin{proof}
  We use $\Hper\Psi=E_0\Psi$ for
  \begin{align*}
    \norm[2]{(\Hper-E_0)(\Psi\molly{x,L})}&
      =\norm[2]{2\nabla\Psi\cdot\nabla\molly{x,L}+\Psi\Laplace\molly{x,L}}\\&
      \le2\norm[2]{\ifu{\Lambda_L(x)}\nabla\Psi}\norm[\infty]{\nabla\molly{x,L}}
        +\norm[\infty]{\Psi}\norm[2]{\Laplace\molly{x,L}}\text.
  \end{align*}
  A short calculation shows the $x$-independent bound
  \begin{equation*}
    \norm[\infty]{\nabla\molly{x,L}}
      =L^{-1}\norm[\infty]{\nabla\molly{}}
    \qtextq{and}
    \norm[2]{\Laplace\molly{x,L}}
      =L^{-2+(d/2)}\norm[2]{\Laplace\molly{}}
    \text.
  \end{equation*}
  Furthermore, we have
  \begin{equation*}
    \norm[2]{\ifu{\Lambda_L(x)}\nabla\Psi}
      =(2L+1)^{d/2}\norm[2]{\ifu\cD\nabla\Psi}
  \end{equation*}
  and
  \begin{equation}\label{eq:lowerPsimolly}
    \norm[2]{\Psi\molly{x,L}}
      \ge
      L^{d/2}\norm[2]{\Psi\ifu\cD}
      =L^{d/2}
    \text.
  \end{equation}
  We combine this to get
  \begin{equation*}
    \norm[2]{(\Hper-E_0)\psi_{x,L}}
      \le\frac
          {2(2+L^{-1})^{d/2}
            \norm[2]{\ifu\cD\nabla\Psi}\norm[\infty]{\nabla\molly{}}
          +\Psi_+\norm[2]{\Laplace\molly{}}/L}
          L
  \end{equation*}
  and choose $\perappEF:=2\cdot3^{d/2}\norm[2]{\ifu\cD\nabla\Psi}
    \norm[\infty]{\nabla\molly{}}+\Psi_+\norm[2]{\Laplace\molly{}}$.
\end{proof}

\Cref{BC} deals with the random part in the estimate from \cref{E0inf}.
\begin{lemma}\label{BC}
  There exists a set $\Omega_{BC}\in\cA$
  of full probability $\Prob(\Omega_{BC})=1$,
  such that for all $\omega\in\Omega_{BC}$, $\ve>0$ and $L\in\bN$,
  there exists $x\in\cL$ satisfying
  \begin{equation*}
    \norm[2]{W_\omega\psi_{x,L}}\le\ve\text.
  \end{equation*}
\end{lemma}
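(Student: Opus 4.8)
\emph{Proof plan.}
The plan is to exhibit, $\Prob$-almost surely, arbitrarily large boxes on which the realisation $W_\omega$ is negligible against the normalised test function $\psi_{x,L}$. The crucial point is that making $\lambda_k$ small does \emph{not} make the bump $u_{\lambda_k}=\mu\ifu{\lambda_k A}$ small in sup-norm --- it keeps the value $\mu$ --- but it shrinks its \emph{support}, since $\vol{\lambda_k A}=\lambda_k^d\vol A$. Because $A\sse\cD$ and $\cD$ is a fundamental domain for $\cL$, the translates $k+\lambda_k A$, $k\in\cL$, are pairwise disjoint, so $0\le W_\omega\le\mu$ everywhere. For $x=\M m\in\cL$ one checks that $(k+\cD)\isect\Lambda_L(x)\ne\emptyset$ precisely for $k$ in the cube $Q_m^L:=\M\bigl(m+\{-L,\dots,L\}^d\bigr)$, which consists of $(2L+1)^d$ lattice points. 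Hence, whenever $\lambda_k(\omega)\le\delta$ for all $k\in Q_m^L$, the set $\{y\in\Lambda_L(x):W_\omega(y)\ne0\}$ is covered by the disjoint union $\Union_{k\in Q_m^L}(k+\lambda_k A)$ and so has measure at most $(2L+1)^d\delta^d\vol A$.

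Combining this with the pointwise bound $\norm[\infty]{\psi_{x,L}}^2\le\Psi_+^2/\norm[2]{\Psi\molly{x,L}}^2\le\Psi_+^2/L^d$ (using $0\le\molly{}\le1$, $\abs\Psi\le\Psi_+$ from \eqref{eq:harnack}, and \eqref{eq:lowerPsimolly}), one gets
\begin{equation*}
  \norm[2]{W_\omega\psi_{x,L}}^2
  \le\frac{\mu^2\Psi_+^2}{L^d}\,(2L+1)^d\delta^d\vol A
  \le 3^d\mu^2\Psi_+^2\,\vol A\,\delta^d
\end{equation*}
for every $L\in\bN$, since $(2L+1)/L\le3$. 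As $\mu>0$, $\Psi_+>0$ and $\vol A>0$ by hypothesis, the choice $\delta(\ve):=\bigl(\ve^2/(3^d\mu^2\Psi_+^2\vol A)\bigr)^{1/d}$ --- which does not depend on $L$ --- then forces $\norm[2]{W_\omega\psi_{x,L}}\le\ve$ as soon as all $\lambda_k$ with $k\in Q_m^L$ are $\le\delta(\ve)$.

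It remains to secure such a box almost surely, and here the probabilistic input enters. Fix $L\in\bN$ and $\delta>0$. By \eqref{supportcondition}, $p_\delta:=\Prob\{\lambda_0\le\delta\}>0$, so by independence the event $G_m^{L,\delta}:=\{\lambda_k\le\delta\text{ for all }k\in Q_m^L\}$ has probability $p_\delta^{(2L+1)^d}>0$. Restricting $m$ to the sublattice $(2L+1)\bZ^d$ makes the cubes $Q_m^L$ --- and hence the events $G_m^{L,\delta}$ --- pairwise disjoint and independent, and as there are infinitely many of them, $\sum_m\Prob(G_m^{L,\delta})=\infty$; the second Borel--Cantelli lemma shows that $\Prob$-a.s.\ infinitely many of them occur. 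Let $\Omega_{L,\delta}\in\cA$ be this full-probability event. Finally put $\Omega_{BC}:=\Isect_{L\in\bN}\Isect_{m\in\bN}\Omega_{L,1/m}$, a countable intersection of events of full probability, so $\Prob(\Omega_{BC})=1$. Given $\omega\in\Omega_{BC}$, $\ve>0$ and $L\in\bN$, pick $m\in\bN$ with $1/m\le\delta(\ve)$; then $\omega\in\Omega_{L,1/m}$, so some cube $Q_{m_0}^L$ has all of its $\lambda_k(\omega)\le1/m\le\delta(\ve)$, and the displayed estimate yields $\norm[2]{W_\omega\psi_{x,L}}\le\ve$ for $x:=\M m_0$.

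The only genuine difficulty is the first step: recognising that the support condition \eqref{supportcondition} must be exploited through the \emph{shrinking support} of the bumps (together with the $L^{-d}$ gain coming from normalising $\psi_{x,L}$ over a region of volume $\sim L^d$), and not through their height, which stays $\mu$. Once the estimate of the first two paragraphs is in place, the rest is a routine Borel--Cantelli argument resting on independence and \eqref{supportcondition}.
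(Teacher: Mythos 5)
Your proposal is correct and follows essentially the same strategy as the paper's proof: use the support condition and independence to invoke (second) Borel--Cantelli and find, for each $L$ and each shrinkage threshold, an $x\in\cL$ on whose $L$-box all $\lambda_k$ are small; then observe that small $\lambda_k$ shrinks the \emph{support} of $u_{\lambda_k}$ (not its height), giving $\norm[2]{W_\omega\psi_{x,L}}\le\Psi_+\mu\sqrt{\vol A}\,(3\delta)^{d/2}$ uniformly in $L$ via the normalization $\norm[2]{\Psi\molly{x,L}}\ge L^{d/2}$. The paper bounds $\norm[2]{W_\omega\ifu{\Lambda_L(x)}}$ directly rather than splitting off $\norm[\infty]{W_\omega}$ and the support measure, but both routes yield the identical estimate and the same choice of threshold.
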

\begin{proof}
  Let $I_L:=\cL\isect\Lambda_L$ and $I_L(x):=x+I_L$.
  For all $\alpha>0$ and $L\in\bN$,
  the Borel--Cantelli lemma provides us with a set $\Omega_{\alpha,L}\in\cA$
  of full measure, such that for all $\omega\in\Omega_{\alpha,L}$,
  there exists $x=x_{\alpha,L,\omega}\in\cL$ such that
  \begin{equation}\label{eq:BC}
    \sup_{k\in I_L(x)}\lambda_k(\omega)\le\alpha\text.
  \end{equation}
  We let $\Omega_{BC}
    :=\Isect_{\alpha\in\bQ,\alpha>0}\Isect_{L\in\bN}\Omega_{\alpha,L}\in\cA$
  and note that $\Prob(\Omega_{BC})=1$,
  and that~\eqref{eq:BC} holds for all $\alpha>0$, $L\in\bN$,
  and $\omega\in\Omega_{BC}$.
  \par
  Using~\eqref{eq:BC}, we calculate
  \begin{equation*}
    \norm[2]{W_\omega\ifu{\Lambda_L(x)}}^2
    \le\mu^2\alpha^d\vol A\setsize{I_L}
  \end{equation*}
  and, together with~\eqref{eq:lowerPsimolly},
  \begin{align*}
    \norm[2]{W_\omega\psi_{x,L}}&
      \le\frac{\norm[\infty]\Psi}{\norm[2]{\Psi\molly{x,L}}}
        \norm[2]{W_\omega\ifu{\Lambda_L(x)}}\\&
      \le\frac{\Psi_+\mu\sqrt{\alpha^d\vol A(2L+1)^d}}{L^{d/2}}
      \le\Psi_+\mu\sqrt{\vol A}(3\alpha)^{d/2}
      \text.
  \end{align*}
  Choosing
  $\alpha:=\frac13\bigl(\ve/(\Psi_+\mu\sqrt{\vol A})\bigr)^{2/d}>0$
  finishes the proof.
\end{proof}

\subsection{Temple vs.\ Thirring}

In \cite{KirschV-10} a Lifshitz bound was proven
for a restricted class of breather potentials.
The method of proof used there is not applicable
to the potential~\eqref{breatherSSP}, at least not directly,
because of the use of Temple's inequality~\cite{Temple-28}.
Let us explain why:
Provided that~$\psi$ is a normalized vector in the domain
of a lower bounded self-adjoint operator~$H$ with
$E_1(H)=\inf\sigma(H)$ being a simple eigenvalue and
$E_2(H)=\inf(\sigma(H)\setminus\{E_1(H)\})$
being an eigenvalue, and $\nu\in\bR$ a number such that $\inf\sigma(H)\le\spr\psi{H\psi}<\nu\le E_2(H)$,
then Temple's inequality states that the lowest eigenvalue~$E_1(H)$
of~$H$ is lower bounded by
\[
E_1(H)\ge
\spr \psi {H \psi} - \frac{\norm{ H \psi }^2 -(\spr \psi {H \psi})^2}{\nu  -\spr \psi {H \psi}}\text.
\]
Here $\spr \psi {H \psi}$ can be considered as the first order approximation to $E_1(H)$.
Indeed, it is a true \emph{upper bound}. To obtain a lower bound one has to subtract a normalization of the second order correction
$\norm{ H \psi }^2 -(\spr \psi {H \psi})^2$.
This expression is non-negative, since it is a variance.
In our application, considering eigenvalues close to zero,
$\spr \psi {H \psi}$ is small, consequently is $(\spr \psi {H \psi})^2$
quadratically small and thus negligible.
If the single site potential $u(\lambda,x)= \ifu{\lambda A}(x)$
is an indicator function of~$\lambda A$, we have
\begin{equation*}
  \spr\psi{u(\lambda,\cdot)^2\psi}
  =\spr\psi{u(\lambda,\cdot)\psi}\text.
\end{equation*}
If the translates $k+\lambda A, k \in \cL$,
do not overlap for any allowed value of~$\lambda$, \eg\ if is~$A$ is small,
then we have for the resulting breather potential~$W_\omega$
in~\eqref{e-random-potential} again
\begin{equation*}
  \spr\psi{W_\omega^2\psi}
  =\spr\psi{W_\omega\psi}\text.
\end{equation*}
The natural choice of a test function~$\psi$
is the ground state of $\Hper^{L,M}$ with eigenvalue $E_1(\Hper^{L,M})=0$.
Then we have
\begin{equation*}
  \norm{H_\omega^{L,M}\psi}
  =\spr\psi{H_\omega^{L,M}\psi}\text.
\end{equation*}
Hence the second moment is equal to the first one
and cannot be considered as small correction.
Note that the difference $\nu-\spr\psi{H_\omega^{L,M}\psi}$
is bounded by the gap between the first two eigenvalues of~$H_\omega^{L,M}$,
typically of order~$L^{-2}$.
Thus, dividing by this number actually makes the correction term even larger.

It turns out that Thirring's inequality \cite[\nopp3.5.32]{Thirring-94}
is better adapted to the model under consideration.
It was introduced in the context of random operators in \cite{KirschM-83a},
but then abandoned in later literature in favor of Temple's inequality.
For the readers convenience we reprove Thirring's inequality here.

\begin{lemma}[Thirring]\label{lemma:Thirring}
  Let~$V$ be an invertible, positive operator on the Hilbert space~$\cH$,
  $P\from\cH\to P(\cH)\subseteq\cH$~an orthogonal projection and suppose,
  that the operator $PV^{-1}P^*\in\cB(P\cH)$ is invertible.
  Then
  \begin{equation*}
    P^*(PV^{-1}P^*)^{-1}P
      \le V\text.
  \end{equation*}
  In consequence, if~$H\from D(H)\to\cH$ is a self-adjoint operator on~$\cH$,
  bounded from below, and
  \begin{equation*}
    E_1(H)\le E_2(H)\le E_3(H)\le\dotsb
  \end{equation*}
  is the sequence of eigenvalues of~$H$ below $\inf\sigma_\mathrm{ess}(H)$,
  counted with multiplicity, then for all $n\in\bN$
  \begin{equation*}
    E_n\bigl(H+P^*(PV^{-1}P^*)^{-1}P\bigr)\le E_n(H+V)\text.
  \end{equation*}
\end{lemma}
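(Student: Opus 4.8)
The plan is to prove the operator inequality first and then read off the eigenvalue comparison from the min-max principle. Write $A:=PV^{-1}P^*\in\cB(P\cH)$ for the compression, which is invertible by hypothesis, and recall that $V$ being positive and invertible means $V^{1/2}$ is self-adjoint with bounded inverse $V^{-1/2}$. The heart of the matter is a Cauchy--Schwarz estimate in the variable $\phi$: for every $\psi\in\dom(V^{1/2})$ and every $\phi\in P\cH$,
\[
  \spr{P\psi}{\phi}=\spr{\psi}{P^*\phi}=\spr{V^{1/2}\psi}{V^{-1/2}P^*\phi}
  \quad\Longrightarrow\quad
  \abs{\spr{P\psi}{\phi}}^2\le\spr{\psi}{V\psi}\,\spr{\phi}{A\phi}.
\]
First I would substitute $\phi:=A^{-1}P\psi\in P\cH$; then $\spr{P\psi}{\phi}=\spr{\phi}{A\phi}=\spr{\psi}{P^*A^{-1}P\psi}\ge0$, so the displayed inequality collapses to $\spr{\psi}{P^*A^{-1}P\psi}^2\le\spr{\psi}{V\psi}\,\spr{\psi}{P^*A^{-1}P\psi}$. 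Dividing out (the case where the left factor vanishes being trivial, and the case $\psi\notin\dom(V^{1/2})$ giving $\spr{\psi}{V\psi}=+\infty$) yields $\spr{\psi}{P^*A^{-1}P\psi}\le\spr{\psi}{V\psi}$ for all $\psi$, which is exactly $P^*(PV^{-1}P^*)^{-1}P\le V$ in the form sense.

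For the consequence, observe that $P^*A^{-1}P$ is a bounded non-negative operator, so $H+P^*A^{-1}P$ is self-adjoint and bounded below on $\dom(H)$, with the same form domain as $H$, which contains the form domain of $H+V$; the inequality just proved then gives $H+P^*A^{-1}P\le H+V$ as quadratic forms. Now I would apply the min-max principle at the level of forms: for a self-adjoint operator $T$ bounded below, $E_n(T)$ equals the infimum over $n$-dimensional subspaces $\M$ of its form domain of $\sup_{0\ne\psi\in\M}\spr{\psi}{T\psi}/\spr{\psi}{\psi}$, and this is the $n$-th eigenvalue of $T$ below $\inf\sigma_\mathrm{ess}(T)$, counted with multiplicity, as long as such an eigenvalue exists, and $\inf\sigma_\mathrm{ess}(T)$ otherwise. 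Since every $n$-dimensional subspace of the form domain of $H+V$ is also one of the form domain of $H+P^*A^{-1}P$, and on it the Rayleigh quotient of $H+V$ dominates that of $H+P^*A^{-1}P$, passing to infima gives $E_n(H+P^*A^{-1}P)\le E_n(H+V)$ for all $n\in\bN$.

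I expect no serious obstacle here: the argument is entirely the elementary Cauchy--Schwarz trick for the compression $PV^{-1}P^*$. The only points requiring a little care are the domain bookkeeping in the Cauchy--Schwarz step (settled by noting the inequality is vacuous off the form domain of $V$, and that $V^{-1/2}$ maps $\cH$ into $\dom(V^{1/2})$), and phrasing the comparison at the level of quadratic forms rather than operators, since $V$ need not be bounded. The routine verifications I would omit are that the form sum $H+V$ is well defined and bounded below, and that the min-max formula quoted above indeed computes the discrete spectrum below $\inf\sigma_\mathrm{ess}$.
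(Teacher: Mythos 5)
Your proof is correct, and the operator-inequality half takes a genuinely different route from the paper's. The paper sets $Q:=V^{-1/2}P^*(PV^{-1}P^*)^{-1}PV^{-1/2}$, checks in one line that $Q^2=Q=Q^*$ so that $Q$ is an orthogonal projection with $Q\le\Id$, and reads off $P^*(PV^{-1}P^*)^{-1}P=V^{1/2}QV^{1/2}\le V$. Your Cauchy--Schwarz estimate with the extremizing choice $\phi=(PV^{-1}P^*)^{-1}P\psi$ reaches the same inequality variationally; unfolded, it is precisely the statement $\norm{QV^{1/2}\psi}\le\norm{V^{1/2}\psi}$, so the two arguments are dual presentations of one fact. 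The paper's version is algebraically slicker, yours is more elementary and makes the optimal test vector explicit. For the eigenvalue comparison your phrasing in terms of form domains is slightly more careful than the paper's, which nominally takes subspaces of $D(H)$ in both min-max expressions; when $V$ is unbounded one must observe that this still recovers $E_n(H+V)$ because $D(H)\isect Q(V)$ is a form core for $H+V$, a point your form-domain formulation sidesteps. In the paper's application $V$ is bounded and the distinction evaporates, but for the lemma in the generality stated your bookkeeping is the cleaner one.
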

\begin{proof}
  Let $Q:=V^{-\frac12}P^*(PV^{-1}P^*)^{-1}PV^{-\frac12}$.
  Note that because of
  \begin{equation*}
    Q^2
      =V^{-\frac12}P^*(PV^{-1}P^*)^{-1}PV^{-1}P^*(PV^{-1}P^*)^{-1}PV^{-\frac12}
      =Q=Q^*\text,
  \end{equation*}
  $Q$~is itself an orthogonal projection.  Therefore $Q\le\Id[\cH]$,
  \ie\ $\spr\psi{Q\psi}\le\spr\psi\psi$ for all $\psi\in\cH$.
  This directly implies
  \begin{equation*}
    P^*(PV^{-1}P^*)^{-1}P=V^{\frac12}QV^{\frac12}\le V\text.
  \end{equation*}
  By the min-max-principle, see \eg\ \cite[\nopp3.5.21]{Thirring-94},
  for all $n\in\bN$
  \begin{align*}&
    E_n\bigl(H+P^*(PV^{-1}P^*)^{-1}P\bigr)\\&\quad
      =\adjustlimits\inf_{\substack{\cH'\subseteq D(H)\\\dim(\cH')=n}}
        \sup_{\substack{\psi\in\cH'\\\norm\psi=1}}
        \spr\psi{(H+P^*(PV^{-1}P^*)^{-1}P)\psi}\\&\quad
      \le\adjustlimits\inf_{\substack{\cH'\subseteq D(H)\\\dim(\cH')=n}}
        \sup_{\substack{\psi\in\cH'\\\norm\psi=1}}
        \spr\psi{(H+V)\psi}
      =E_n(H+V)
    \text.\qedhere
  \end{align*}
\end{proof}

We actually need only the following special case of \cref{lemma:Thirring}.
\begin{cor}\label{corollary:Thirring}
  Let $\psi\in\cH$ be a normalized ground state of~$H$,
  \ie\ $\norm\psi=1$ and $H\psi=E_1(H)\psi$,
  and $P=P_{\psi}$ the orthogonal projection onto
  $\operatorname{span}\{\psi\}\subseteq\cH$.
  Then
  \begin{equation*}
    \min\{E_1(H)+\spr\psi{V^{-1}\psi}^{-1},E_2(H)\}
      \le E_1(H+V)\text.
  \end{equation*}
\end{cor}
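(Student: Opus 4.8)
The plan is to apply Lemma~\ref{lemma:Thirring} with $n=1$ and with $P=P_\psi$ the rank-one orthogonal projection onto $\operatorname{span}\{\psi\}$. First I would identify the operator $P^*(PV^{-1}P^*)^{-1}P$ in this special case. Since $P$ has rank one, $PV^{-1}P^*$ is the operator on the one-dimensional space $P\cH=\operatorname{span}\{\psi\}$ given by multiplication by the scalar $\spr\psi{V^{-1}\psi}$; note this scalar is strictly positive because $V$ is positive and invertible, so $PV^{-1}P^*$ is indeed invertible on $P\cH$. Hence its inverse is multiplication by $\spr\psi{V^{-1}\psi}^{-1}$, and therefore
\begin{equation*}
  P^*(PV^{-1}P^*)^{-1}P=\spr\psi{V^{-1}\psi}^{-1}\,P_\psi\text.
\end{equation*}

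Next I would feed this into the eigenvalue inequality from Lemma~\ref{lemma:Thirring}: for $n=1$,
\begin{equation*}
  E_1\bigl(H+\spr\psi{V^{-1}\psi}^{-1}P_\psi\bigr)\le E_1(H+V)\text.
\end{equation*}
It remains to compute the lowest eigenvalue of $H+cP_\psi$ where $c:=\spr\psi{V^{-1}\psi}^{-1}>0$ and $\psi$ is the normalized ground state of $H$. The key observation is that $\psi$ is an eigenvector of $H+cP_\psi$ with eigenvalue $E_1(H)+c$, since $H\psi=E_1(H)\psi$ and $P_\psi\psi=\psi$; moreover $H+cP_\psi$ leaves invariant the orthogonal complement $\{\psi\}^\perp$, on which it acts as $H$ restricted to $\{\psi\}^\perp$, whose bottom eigenvalue (or infimum of spectrum) is $E_2(H)$. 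Consequently the spectrum of $H+cP_\psi$ is the union of $\{E_1(H)+c\}$ with $\sigma(H\rstr{\{\psi\}^\perp})$, so its minimum is $\min\{E_1(H)+c,\,E_2(H)\}$. Combining the two displays gives exactly
\begin{equation*}
  \min\{E_1(H)+\spr\psi{V^{-1}\psi}^{-1},E_2(H)\}\le E_1(H+V)\text,
\end{equation*}
as claimed.

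The only mild subtlety to attend to is the rigorous splitting of $H+cP_\psi$ along $\operatorname{span}\{\psi\}\oplus\{\psi\}^\perp$: one should check that $P_\psi$ is a bounded self-adjoint perturbation (it is, with norm $1$), so that the form domain and the reducing-subspace decomposition behave as expected, and that ``$E_2(H)$'' is correctly interpreted as $\inf\bigl(\sigma(H)\setminus\{E_1(H)\}\bigr)$ — matching the convention in Lemma~\ref{lemma:Thirring} — regardless of whether $E_1(H)$ is simple or whether there is discrete spectrum above it. I do not expect any real obstacle here; the argument is essentially a one-line specialization of Lemma~\ref{lemma:Thirring} together with an elementary diagonalization of a rank-one perturbation on the ground-state line.
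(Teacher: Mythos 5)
Your proof is correct and takes essentially the same approach as the paper: both first identify $P^*(PV^{-1}P^*)^{-1}P=\spr\psi{V^{-1}\psi}^{-1}P_\psi$ and then lower-bound $E_1(H+cP_\psi)$ by $\min\{E_1(H)+c,E_2(H)\}$. Where the paper decomposes an arbitrary unit vector $\varphi=\alpha\psi+\psi^\perp$ and observes that the quadratic form of $H+cP_\psi$ is affine in $\abs\alpha^2$, you invoke the reducing-subspace block structure along $\operatorname{span}\{\psi\}\oplus\{\psi\}^\perp$; these are the same computation in different packaging. One small slip in your closing remark: the convention in \cref{lemma:Thirring} is min-max eigenvalues counted with multiplicity, not $\inf\bigl(\sigma(H)\setminus\{E_1(H)\}\bigr)$ (these differ when the ground state is degenerate), but your key identity $\inf\sigma\bigl(H\rstr{\{\psi\}^\perp}\bigr)=E_2(H)$ holds precisely in the counting-with-multiplicity sense, so the argument goes through unchanged.
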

\begin{proof}
  Since $P\cH$ is one dimensional,
  \begin{equation*}
    PV^{-1}P^*
      =\spr{P\psi}{PV^{-1}P^*P\psi}
      =\spr{P^*P\psi}{V^{-1}P^*P\psi}
      =\spr\psi{V^{-1}\psi}\text,
  \end{equation*}
  where the scalar on the right hand side
  is interpreted as multiplication operator on $P\cH$.
  We use the min-max-principle to show
  \begin{equation*}
    E_1(H+P^*(PV^{-1}P^*)^{-1}P)
      \ge\min\{E_1(H)+\spr\psi{V^{-1}\psi}^{-1},E_2(H)\}\text.
  \end{equation*}
  Then \cref{corollary:Thirring} follows from \cref{lemma:Thirring}.

  In order to apply the min-max-principle,
  we decompose the arbitrary vector~$\varphi\in\cH$
  of unit length~$\norm\varphi=1$ into $\varphi=\alpha\psi+\psi^\bot$
  with $\alpha\in\bC$, $\abs\alpha\le1$, and~$\psi^\bot$ orthogonal to~$\psi$.
  Self-adjointness of~$H$ gives us $\spr\psi{H\psi^\bot}=0$.
  In addition we know
  $\spr{\psi^\bot}{H\psi^\bot}
    \ge E_2(H)\norm{\psi^\bot}^2
    =E_2(H)(1-\abs\alpha^2)$.
  We now see that
  \begin{align*}&
    \spr\varphi{(H+P^*(PV^{-1}P^*)^{-1}P)\varphi}\\&\quad
      =\abs\alpha^2\spr\psi{H\psi}+\abs\alpha^2\spr\psi{V^{-1}\psi}^{-1}
        +\spr{\psi^\bot}{H\psi^\bot}\\&\quad
      \ge\abs\alpha^2\bigl(E_1(H)+\spr\psi{V^{-1}\psi}^{-1}\bigr)
        +(1-\abs\alpha^2)E_2(H)
    \text.
  \end{align*}
  The last expression is affine linear in $\abs\alpha^2\in\Icc01$,
  so the minimum is realized for $\abs\alpha^2\in\{0,1\}$.
\end{proof}

\subsection{Proof of Lifshitz asymptotics for the breather model}
\label{subsectionProofBreather}
\genericConstant{Cld}%
\begin{proof}[Proof of \cref{thmbreather}]
  Recall the fundamental domain $\cD=\M\Ioo{-\frac12}{\frac12}^d$
  of the lattice~$\cL=\M\bZ^d$
  and $\Lambda_L:=\M\Ioo[\big]{-(L+\frac12)}{L+\frac12}^d$.
  We define a relevant index set $I_L:=\cL\isect\Lambda_L$ for $L\in\bN$
  and note that~$\Lambda_L$
  is the interior of the closure of the open set
  $I_L+\cD=\Union_{k\in I_L}(k+\cD)$.
  We therefore have $\vol{\Lambda_L}=\setsize{I_L}\cdot\vol\cD$,
  where $\vol{\argmt}$ denotes Lebesgue measure and
  $\setsize{}$ the counting measure.
  Since we will only use Mezincescu boundary conditions in this proof,
  we drop~$M$ from the notation for finite volume restrictions of operators:
  $H^L:=H^{L,M}=H^{\Lambda_L,M}$.

  By adding a constant to the periodic potential~$\Vper$,
  we can without loss of generality simplify our notation and assume
  \begin{equation}\label{E0=0}
    E_0
      =0\text.
  \end{equation}
  We also need to show~\eqref{e-LifExp} only for points~$E$
  of continuity of $N$,
  since~$N$ is monotone and the right hand side of~\eqref{e-LifExp}
  is continuous in~$E$.

  We single out the following properties of the single site potential:
  \begin{equation}\label{eq:OBdA}
    u_{\lambda_k}=\mu\ifu{\supp u_{\lambda_k}}\le\mu\ifu{\cDc}
    \qtextq{$\Prob\times\lambda^d$-a.\,e., and}
    \E\bigl[\vol{\supp u_{\lambda_k}}\bigr]>0
    \text,
  \end{equation}
  where the first conditions holds almost everywhere
  with respect to the product measure on $\Omega\times\bR^d$
  of~$\Prob$ and Lebesgue measure~$\lambda^d$.
  These are the properties of~$u$ we will actually use in this proof.
  This abstraction will be useful in \cref{thmabstract},
  where we can recycle large parts from here.
  Note that we do not require precise information about
  $\supp u_\lambda=\{x\in\bR^d\mid u_\lambda(x)=\mu\}$,
  its suffices to know that~$u$ only assumes the two values~$0$ and~$\mu$,
  is supported in~$\cDc$,
  and that the expected Lebesgue measure of the support of~$u$ is positive.

  For non--negative single site potentials it is well known that,
  using \eqref{eq:IDSinfL},
  \begin{equation}\label{eq:NE}
    \begin{aligned}
      N(E)&
        \le\E[N_L(E,H_{\argmt})]
        =\E\bigl[\ifu{\{E_1(H_{\argmt}^L)\le E\}}N_L(E,H_{\argmt})\bigr]\\&
        \le\E\bigl[\ifu{\{E_1(H_{\argmt}^L)\le E\}}N_L(E,\Hper)\bigr]\\&
        \le N_L(E,\Hper)\Prob\{\omega\colon E_1(H_\omega^L)\le E\}
    \end{aligned}
  \end{equation}
  for $L\in\bN$ and points $E\in\bR$ of continuity of~$N$.
  Since $N_L(E,\Hper)$ is uniformly bounded in $L\in\bN$
  and $E\le1$, see \cite[Prop.~3.1]{Kirsch-Martinelli-1982-JPhysA},
  it is sufficient to derive an exponential bound on the probability
  that the first eigenvalue~$E_1(H_\omega^L)$ of~$H_\omega^L$
  does not exceed~$E$ for a suitably chosen~$L=L_E$.

  In order to apply \cref{corollary:Thirring} of Thirring's inequality,
  we need the potential to be strictly positive.
  Therefore we split the potential in the following way
  \begin{equation*}
    H_0^L:=-\Delta^L+\Vper-\gamma_L\quad\text{and}\quad
    V_\omega:=W_\omega+\gamma_L
  \end{equation*}
  with $\gamma_L:=\Cgap/(2L^2)$ and $\Cgap$ from~\eqref{e-Mezgap}.
  This shift by $\gamma_L$ scales like the gap between the first
  and the second eigenvalue of~$(-\Laplace+\Vper)^L$, cf.~\eqref{e-Mezgap}.

  Due to the normalization~\eqref{E0=0},
  the ground state energy of~$H_0^L$ is $E_1(H_0^L)=-\gamma_L$.
  Furthermore, by \eqref{e-Mezgap},
  \begin{equation}\label{eq:E2}
    E_2(H_0^L)
      \ge\frac{\Cgap}{L^2}-\gamma_L
      =\gamma_L\qquad(L\in\bN)\text.
  \end{equation}

  Recall from~\eqref{eqPsi}
  the $\cL$-periodic function~$\Psi\from\bR^d\to\Ioo0\infty$.
  As pointed out in \cref{sectionboundaryconditions},
  the ground state of~$H_0^L$ is given by
  $\Psi_L:=\ifu{\Lambda_L}\Psi/\sqrt{\setsize{I_L}}$,
  now properly normalized.
  We define for all $k\in\cL$ and $L\in\bN$
  the random variables $X_k,S_L\from\Omega\to\Icc01$ via
  \begin{equation}\label{XS}
    X_k(\omega):=\int_{\supp u_{\lambda_k(\omega)}}\abs{\Psi(x)}^2\,\dx
    \qtextq{and}
    S_L:=\frac1{\setsize{I_L}}\sum_{k\in I_L}X_k
    \text.
  \end{equation}
  The values of~$X_k$ and~$S_L$ are in~$\Icc01$
  because of $\norm[2]{\Psi\ifu{\cDc}}=1$.
  From $\Psi(x)\ge\Psi_->0$ and \eqref{eq:OBdA} we have $\E[X_0]>0$.
  We define the crucial length~$\hat L_E$:
  \begin{equation*}
    \hat L_E:=\floor*{\sqrt{\Cgap/(2E)}}
  \end{equation*}
  and see, using~\eqref{eq:E2}, that for all $L\in\bN$, $L\le\hat L_E$,
  \begin{equation}\label{eq:gamma-sandwich}
    E\le\frac{\Cgap}{2\hat L_E^2}
      =\gamma_{\hat L_E}
      \le\gamma_L
      \le E_2(H_0^L)\text.
  \end{equation}
  On the event $\{\omega\colon E_1(H_\omega^{\hat L})\le E\}$,
  \cref{lem:XSV} below, \eqref{eq:gamma-sandwich},
  and Thirring's inequality~\cref{corollary:Thirring}
  imply for all $L\in\bN$, $L_0\le L\le\hat L_E$,
  \begin{equation*}
    \frac{\gamma_L}2S_L(\omega)
      \le E_1(H_0^L)+\spr{\Psi_L}{V_\omega^{-1}\Psi_L}^{-1}
      \le E_1(H_\omega^L)
      \le E
      \le E_2(H_0^L)\text.
  \end{equation*}
  Let $L_E:=\floor[\big]{\sqrt{\Cgap\E[X_0]/(8E)}}$.
  We use $X_0\le1$ to check that for~$E$ small enough, $L_0\le L_E\le\hat L_E$.
  Hence, since $\E[X_0]=\E[S_L]$ is constant in~$L$,
  \begin{align*}
    \Prob\{\omega\colon E_1(H_\omega^{L_E})\le E\}&
      \le\Prob\bigl\{\tfrac{\gamma_{L_E}}2S_{L_E}\le E\bigr\}
      \le\Prob\{S_{L_E}\le\tfrac12\E[S_{L_E}]\}\text.
  \end{align*}

  Finally, observe that the random variables~$X_k$, $k\in\cL$,
  are independent.
  Chernov's bound, see \cref{LDP},
  estimates the last probability by $\e^{-\Cld(2L_E+1)^d}$
  with some positive constant~$\Cld$,
  since $\setsize{I_{L_E}}=(2L_E+1)^d$.
  Continuing the estimate~\eqref{eq:NE},
  we see from the definition of~$L_E$, that
  \begin{align*}
    N(E)&
    \le\Cfront\exp\bigl(-\Cld(2L_E+1)^d\bigr)\\&
    \le\Cfront\exp\bigl(-\Cld\bigl(2\sqrt{\Cgap\E[X_0]/(8E)}-1\bigr)^d\bigr)\\&
    =\Cfront\exp\bigl(-\Cld\bigl(2\sqrt{\Cgap\E[X_0]/8}-\sqrt E\bigr)^d
      E^{-d/2}\bigr)\\&
    \le\Cfront\exp\bigl(-\Cexp E^{-d/2}\bigr)
  \end{align*}
  for~$E\le\Cgap\E[X_0]/8$,
  where $\Cexp:=\Cld\bigl(\sqrt{\Cgap\E[X_0]/8}\bigr)^d$.
  We reached~\eqref{e-LifExp} and finished the proof of \cref{thmbreather}.
\end{proof}

\begin{lemma}\label{lem:XSV}
  For all $L\in\bN$,
  $L\ge L_0:=\ceil*{\sqrt{\frac{\Cgap}{2\mu}}}$,
  \begin{equation*}
    \frac{\gamma_L}2S_L(\omega)\le
      E_1(H_0^L)+\spr{\Psi_L}{V_\omega^{-1}\Psi_L}^{-1}\text.
  \end{equation*}
\end{lemma}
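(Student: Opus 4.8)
The plan is to unpack the two sides of the claimed inequality and show the left-hand side is a lower bound for $\spr{\Psi_L}{V_\omega^{-1}\Psi_L}^{-1}$ after accounting for $E_1(H_0^L)=-\gamma_L$; equivalently, it suffices to prove
\begin{equation*}
  \spr{\Psi_L}{V_\omega^{-1}\Psi_L}
    \le\frac{1}{\gamma_L\bigl(\tfrac12 S_L(\omega)+1\bigr)}\text.
\end{equation*}
First I would write out $V_\omega=W_\omega+\gamma_L$ and use the pointwise bound $W_\omega\le\mu\ifu{\cDc\text{-translates}}$ coming from~\eqref{eq:OBdA}: on the support of $u_{\lambda_k}$ (which sits inside $k+\cDc$, and these translates are pairwise disjoint up to null sets) the potential $V_\omega$ equals at least $\gamma_L$ and at most $\mu+\gamma_L$, while off all the supports $V_\omega=\gamma_L$ exactly. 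Hence $V_\omega^{-1}$ is bounded below by $\frac1{\mu+\gamma_L}$ on the union $\Union_{k}\supp u_{\lambda_k}$ and equals $\frac1{\gamma_L}$ elsewhere. Writing the expectation of $V_\omega^{-1}$ against $\abs{\Psi_L}^2$ and splitting the integral over $\Lambda_L$ into the part inside $\Union_{k\in I_L}\supp u_{\lambda_k}$ and its complement, and recalling $\Psi_L=\ifu{\Lambda_L}\Psi/\sqrt{\setsize{I_L}}$ with $\norm[2]{\Psi\ifu\cDc}=1$, the integral over the complement contributes $\frac1{\gamma_L}\bigl(1-S_L(\omega)\bigr)$ (using the definition $S_L=\frac1{\setsize{I_L}}\sum_{k\in I_L}X_k$ and $X_k=\int_{\supp u_{\lambda_k}}\abs\Psi^2$), and the integral over the support part contributes at most $\frac1{\gamma_L}S_L(\omega)$ trivially — but that only gives the bound $\frac1{\gamma_L}$, which is not enough.

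To get the improvement I would instead keep the better constant on the support: the support part contributes at most $\frac{1}{\mu+\gamma_L}S_L(\omega)$. So
\begin{equation*}
  \spr{\Psi_L}{V_\omega^{-1}\Psi_L}
    \le\frac{1-S_L(\omega)}{\gamma_L}+\frac{S_L(\omega)}{\mu+\gamma_L}
    =\frac{1}{\gamma_L}-S_L(\omega)\Bigl(\frac1{\gamma_L}-\frac1{\mu+\gamma_L}\Bigr)
    =\frac{1}{\gamma_L}-\frac{\mu\,S_L(\omega)}{\gamma_L(\mu+\gamma_L)}\text.
\end{equation*}
Now I would use the hypothesis $L\ge L_0=\ceil*{\sqrt{\Cgap/(2\mu)}}$, which is exactly designed so that $\gamma_L=\Cgap/(2L^2)\le\mu$, hence $\mu+\gamma_L\le2\mu$ and $\frac{\mu}{\mu+\gamma_L}\ge\frac12$. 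Plugging in gives $\spr{\Psi_L}{V_\omega^{-1}\Psi_L}\le\frac1{\gamma_L}\bigl(1-\tfrac12 S_L(\omega)\bigr)$. Inverting (the right side is positive since $S_L\le1$) and using $\frac1{1-t}\ge1+t$ for $t\in[0,1)$ with $t=\tfrac12 S_L(\omega)$, we obtain $\spr{\Psi_L}{V_\omega^{-1}\Psi_L}^{-1}\ge\gamma_L\bigl(1+\tfrac12 S_L(\omega)\bigr)=\gamma_L+\tfrac{\gamma_L}2 S_L(\omega)$. Subtracting $\gamma_L=-E_1(H_0^L)$ from both sides yields precisely $\tfrac{\gamma_L}2 S_L(\omega)\le E_1(H_0^L)+\spr{\Psi_L}{V_\omega^{-1}\Psi_L}^{-1}$, which is the assertion.

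The main obstacle, and the only genuinely delicate point, is justifying that the translated supports $k+\supp u_{\lambda_k(\omega)}$ for $k\in\cL$ are pairwise disjoint up to a Lebesgue-null set, so that the integral of $\abs{\Psi_L}^2 V_\omega^{-1}$ splits cleanly into a ``near a support'' piece and a ``far from all supports'' piece without double-counting; this is immediate from $\supp u_{\lambda_k}\subseteq\cDc=\M\Ioo{-\tfrac12}{\tfrac12}^d$ in~\eqref{eq:OBdA}, since the half-open translates of the open cube $\cDc$ tile $\bR^d$ up to the lattice hyperplanes which have measure zero. A secondary bookkeeping point is that $V_\omega^{-1}$ should be read as the multiplication operator by $1/V_\omega$, which is well-defined and bounded because $V_\omega\ge\gamma_L>0$ everywhere; then $\spr{\Psi_L}{V_\omega^{-1}\Psi_L}=\int_{\Lambda_L}\abs{\Psi_L(x)}^2/V_\omega(x)\,\dx$ and all the manipulations above are just this scalar integral. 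Everything else is the elementary inequality $\frac1{1-t}\ge1+t$ and the choice of $L_0$.
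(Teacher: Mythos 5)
Your proof is correct and is essentially the same as the paper's: both split $\spr{\Psi_L}{V_\omega^{-1}\Psi_L}$ over the translates $\cDc+k$, use that the breather potential takes exactly the two values $0$ and $\mu$ so $V_\omega\in\{\gamma_L,\mu+\gamma_L\}$, arrive at the same expression $\tfrac{1-S_L}{\gamma_L}+\tfrac{S_L}{\mu+\gamma_L}$, and then invoke $L\ge L_0\Rightarrow\gamma_L\le\mu$. The only cosmetic difference is that the paper simplifies the resulting scalar exactly before applying a single inequality at the end, whereas you bound $\tfrac{\mu}{\mu+\gamma_L}\ge\tfrac12$ first and then use $\tfrac1{1-t}\ge1+t$ to invert; both routes yield the identical final estimate.
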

\begin{proof}[Proof of \cref{lem:XSV}]\label{proofXSV}
  As $V_\omega$ does not vanish,
  $V_\omega^{-1}$ is well-defined as a multiplication operator.
  By construction, we have
  \begin{align*}
    \spr{\Psi_L}{V_\omega^{-1}\Psi_L}&
      =\int\frac{\abs{\Psi_L(x)}^2}{V_\omega(x)}\,\dx
      =\frac1{\setsize{I_L}}
        \int_{\Lambda_L}\frac{\abs{\Psi(x)}^2}{V_\omega(x)}\,\dx\\&
      =\frac1{\setsize{I_L}}\sum_{k\in I_L}
        \int_{\cDc+k}\frac{\abs{\Psi(x)}^2}{V_\omega(x)}\,\dx\text.
  \end{align*}
  Using $\norm[2]{\Psi\ifu{\cDc}}^2=1$, we rewrite the summand,
  introducing~$X_k$:
  \begin{align*}
    \int_{\cDc+k}\frac{\abs{\Psi(x)}^2}{V_\omega(x)}\,\dx&
    =\int_{\cDc}\frac{\abs{\Psi(x)}^2}{u_{\lambda_k(\omega)}(x)+\gamma_L}\,\dx\\&
    =\frac{X_k(\omega)}{\mu+\gamma_L}+\frac{1-X_k(\omega)}{\gamma_L}
    =\frac1{\gamma_L}\frac{\mu+\gamma_L-\mu X_k(\omega)}{\mu+\gamma_L}\text.
  \end{align*}
  Averaging over~$k\in I_L$ gives
  \begin{align*}
    \spr{\Psi_L}{V_\omega^{-1}\Psi_L}&
      =\frac{\mu+\gamma_L-\mu S_L(\omega)}{(\mu+\gamma_L)\gamma_L}\text.
  \end{align*}
  The inequality $L\ge L_0$ implies $\gamma_L\le\mu$.
  Hence, using $S_L\ge0$, too,
  \begin{align*}
    E_1(H_0^L)+\spr{\Psi_L}{V_\omega^{-1}\Psi_L}^{-1}&
      =\gamma_L
        \frac{\mu S_L(\omega)}
             {\mu+\gamma_L-\mu S_L(\omega)}
      \ge\frac{\gamma_L}2S_L(\omega)\text.
      \qedhere
  \end{align*}
\end{proof}

\begin{lemma}\label{LDP}
  Given a sequence~$X_k$, $k\in\bN$,
  of non--negative i.\,i.\,d.\ random variables with
  $0<\E[X_1]<\infty$, let $S_n:=\frac1n\sum_{k=1}^nX_k$.
  Then there exists $\Cld>0$, determined by the law of~$X_1$, with
  \begin{equation*}
    \Prob\{S_n\le\E[S_n]/2\}
      \le\e^{-\Cld n}\text.
  \end{equation*}
\end{lemma}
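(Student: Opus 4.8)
The plan is to apply the standard exponential Markov (Chernoff) inequality to the random variable $-S_n$, exploiting only the moment generating function of $X_1$ near the origin. First I would fix notation: write $m:=\E[X_1]\in\Ioo0\infty$ and, for $t\ge0$, set $\varphi(t):=\E[\e^{-tX_1}]$. Since $X_1\ge0$ we have $\e^{-tX_1}\le1$, so $\varphi(t)\le1$ is finite for every $t\ge0$; moreover $\varphi$ is smooth on $\Ioo0\infty$ with $\varphi(0)=1$ and $\varphi'(0)=-m<0$ by dominated convergence (the dominating function is $X_1$ for the difference quotient, or one simply uses $0\le(1-\e^{-tx})/t\le x$). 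Hence for all sufficiently small $t>0$ we have $\varphi(t)<\e^{-tm/2}$; fix one such $t_0>0$ and set $c:=\tfrac{t_0 m}{2}-\ln\varphi(t_0)>0$.

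Next I would run the Chernoff estimate. Since the $X_k$ are i.i.d., and the event $\{S_n\le m/2\}$ equals $\{\sum_{k=1}^n X_k\le nm/2\}=\{\e^{-t_0\sum_k X_k}\ge\e^{-t_0 nm/2}\}$, Markov's inequality gives
\begin{equation*}
  \Prob\{S_n\le\E[S_n]/2\}
    \le\e^{t_0 nm/2}\,\E\Bigl[\e^{-t_0\sum_{k=1}^nX_k}\Bigr]
    =\e^{t_0 nm/2}\,\varphi(t_0)^n
    =\bigl(\e^{t_0 m/2}\varphi(t_0)\bigr)^n
    =\e^{-cn}\text.
\end{equation*}
Here we used $\E[S_n]=m$ (which holds since the $X_k$ are identically distributed with mean $m$) and the product structure of the exponential of a sum of independent variables. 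Setting $\Cld:=c>0$ finishes the proof.

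The only point requiring a small argument is the existence of $t_0$ with $\varphi(t_0)<\e^{-t_0 m/2}$, i.e.\ that $g(t):=\ln\varphi(t)+tm/2$ is negative for some $t>0$. This follows from $g(0)=0$ together with $g'(0)=\varphi'(0)+m/2=-m+m/2=-m/2<0$, so $g$ is strictly decreasing at the origin and hence negative just to the right of it; the differentiability of $\varphi$ at $0$ with $\varphi'(0)=-m$ is the one place where dominated convergence (with dominating function $X_1\in L^1$) is invoked. I expect this to be the only mild obstacle; everything else is the textbook Chernoff argument. Note that the constant $\Cld$ depends only on the law of $X_1$ (through $\varphi$ and $m$), as claimed, and in particular is independent of $n$ — which is exactly what the proof of \cref{thmbreather} needs when applying it to the i.i.d.\ family $X_k$ of \eqref{XS} with $n=\setsize{I_{L_E}}=(2L_E+1)^d$.
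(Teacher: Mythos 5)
Your proof is correct and follows essentially the same Chernoff argument as the paper: you work with $\varphi(t)=\E[\e^{-tX_1}]$ while the paper uses $M(t)=\E[\exp(t(\E[X_0]-2X_0))]=\e^{tm}\varphi(2t)$, but these are just reparameterizations of the same quantity, and in both cases the key step is the negative one-sided derivative at $t=0$. You also add a clean justification of the differentiation under the expectation (dominated convergence via $0\le(1-\e^{-tx})/t\le x$), which the paper leaves implicit.
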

\begin{proof}
  Observe that for all non--negative numbers~$t\ge0$ by independence
  \begin{align*}
    \Prob\bigl\{S_n\le\tfrac12\E[S_n]\bigr\}&
      =\E\bigl[\ifu{\{\exp(nt(\E[S_n]-2S_n))\ge1\}}\bigr]\\&
      \le\E\bigl[\exp\bigl(nt(\E[S_n]-2S_n)\bigr)\bigr]\\&
      =\prod_{k=1}^n\E\bigl[\exp\bigl(t(\E[X_k]-2X_k)\bigr)\bigr]
      \text.
  \end{align*}
  The identical distribution of the random variables~$X_k$ implies
  \begin{align*}
    \Prob\bigl\{S_n\le\tfrac12\E[S_n]\bigr\}&
      \le\bigl(\E\bigl[\exp\bigl(t(\E[X_1]-2X_1)\bigr)\bigr]\bigr)^n
      =\exp\bigl(n\log M(t)\bigr)\text,
  \end{align*}
  employing the moment generating function
  \begin{equation*}
    M(t):=\E\bigl[\exp\bigl(t(\E[X_0]-2X_0)\bigr)\bigr]
    \quad(t\in\bR)
  \end{equation*}
  of the random variable $\E[X_0]-2X_0$.
  \par
  Note, that $M(0)=1$ and $M'(0)=\E\bigl[\E[X_0]-2X_0\bigr]=-\E[X_0]<0$.
  Therefore we find $s>0$ with $M(s)<1$,
  which proves the claim with $\Cld:=\abs{\log M(s)}>0$.
\end{proof}

\section{Reduction of the general case to the breather model}\label{s:reduction}

In the present section we reduce a far more general situation
to the setting~\eqref{eq:OBdA}.
This shows Lifshitz tails for a broad family of random potentials.
In the generalization we keep the independence of the single site potentials,
but we do not require them to be identically distributed any more.
Instead, we use a less restrictive way to express the necessary regularity.

\begin{definition}\label{def:non-degenerate}
  Let $\lambda_k\from\Omega\to\Omega_0$, $k\in\cL$,
  be a family of random variables on the probability space
  $(\Omega,\cA,\Prob)$, $u\from\Omega_0\times\bR^d\to\Ico0\infty$
  a function and $\mu>0$.
  The function~$u$ is called \emph{$\mu$-non--degenerate},
  if it is jointly measurable and
  \begin{equation}\label{eq:non-degenerate}
    \inf_{k\in\cL}\Prob\bigl\{
      \vol{\{x\in\cDc\mid u_{\lambda_k}(x)\ge\mu\}}\ge\mu
    \bigr\}
      \ge\mu\text.
  \end{equation}
  We say that~$u$ is \emph{non--degenerate}, if there exists a $\mu>0$
  such that $u$ is $\mu$-non--degenerate.
\end{definition}
\Cref{Enon-degenerate} gives a simple criterion for non--degeneracy.
In the stationary case it corresponds to
the assumption in Theorem 4 of \cite{KirschM-83a}.
The criterion allows to see easily that the single site potential
of the breather model~\eqref{breatherSSP} is non--degenerate.

\begin{lemma}\label{Enon-degenerate}
  The function~$u$ is non--degenerate if and only if
  \begin{equation*}
    \inf_{k\in\cL}\E\Int_{\cDc}\min\{u_{\lambda_k}(x),1\}dx>0\text.
  \end{equation*}
Furthermore, under the additional assumption that $u_{\lambda_k}$, $k\in\cL$,
are identically distributed, non--degeneracy is equivalent to
\begin{equation*}
  \Prob\bigl\{\omega\in\Omega\colon\ifu{\cDc}u_{\lambda_k}\not\equiv0\bigr\}
  >0\text.
\end{equation*}
\end{lemma}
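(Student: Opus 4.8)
The plan is to prove the first (main) equivalence by a direct first--moment computation and then read off the second equivalence as an immediate consequence. Abbreviate $Y_k:=\int_{\cDc}\min\{u_{\lambda_k}(x),1\}\,\dx$. By joint measurability of $u$ the integrand is jointly measurable in $(\omega,x)$, so Tonelli's theorem makes $\omega\mapsto Y_k(\omega)$ a random variable with values in $[0,\vol{\cDc}]$; the same remark shows the event in \eqref{eq:non-degenerate} is measurable. Thus the lemma amounts to showing that $u$ is non--degenerate $\iff\inf_{k\in\cL}\E[Y_k]>0$.

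$(\Rightarrow)$ If $u$ is $\mu$-non--degenerate, I may assume $\mu\le1$, since shrinking $\mu$ only enlarges the event in \eqref{eq:non-degenerate}. On that event the set $\{x\in\cDc:u_{\lambda_k}(x)\ge\mu\}$ has measure at least $\mu$, and there $\min\{u_{\lambda_k},1\}\ge\mu$, so $Y_k\ge\mu^2$ on that event; taking expectations and using that the event has probability at least $\mu$ gives $\E[Y_k]\ge\mu^3$ for every $k$, hence $\inf_k\E[Y_k]>0$.

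$(\Leftarrow)$ Put $c:=\inf_k\E[Y_k]>0$ and note $c\le\vol{\cDc}$ because $Y_k\le\vol{\cDc}$. Splitting $\E[Y_k]$ over $\{Y_k<c/2\}$ and its complement, and using $Y_k\le\vol{\cDc}$ on the latter, yields $\Prob\{Y_k\ge c/2\}\ge c/(2\vol{\cDc})$ for all $k$. On the event $\{Y_k\ge c/2\}$, splitting $\int_{\cDc}\min\{u_{\lambda_k},1\}\,\dx$ at the level $\delta:=c/(4\vol{\cDc})$ (which is $<1$, so $\{\min\{u_{\lambda_k},1\}\ge\delta\}=\{u_{\lambda_k}\ge\delta\}$) and bounding the integrand above by $1$ gives $\vol\{x\in\cDc:u_{\lambda_k}(x)\ge\delta\}\ge c/4$ on that event. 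Setting $\mu:=\min\{\delta,\,c/4,\,c/(2\vol{\cDc})\}$ and using monotonicity in the threshold, one gets $\{Y_k\ge c/2\}\subseteq\{\vol\{x\in\cDc:u_{\lambda_k}(x)\ge\mu\}\ge\mu\}$ for every $k$, whence $u$ is $\mu$-non--degenerate.

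For the last assertion, under the identical--distribution hypothesis $\E[Y_k]$ is independent of $k$, so by the first part $u$ is non--degenerate $\iff\E[Y_0]>0\iff\Prob\{Y_0>0\}>0$; and $\{Y_0>0\}$ coincides $\Prob$-almost surely with $\{\vol\{x\in\cDc:u_{\lambda_0}(x)>0\}>0\}=\{\ifu{\cDc}u_{\lambda_0}\not\equiv0\}$, since $\min\{u_{\lambda_0},1\}$ is non--negative and vanishes exactly where $u_{\lambda_0}$ does. I do not expect a genuine obstacle here; the only points that need care are the joint--measurability argument for $Y_k$ and the bookkeeping of constants, so that the single $\mu$ with $\mu\le\delta$, $\mu\le c/4$ and $\mu\le c/(2\vol{\cDc})$ serves simultaneously in all three occurrences of the threshold in \eqref{eq:non-degenerate}, uniformly in $k\in\cL$.
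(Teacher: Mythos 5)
Your proof is correct and uses essentially the same first--moment splitting as the paper; the only cosmetic difference is that you prove the converse direction directly by exhibiting an explicit $\mu=\min\{\delta,c/4,c/(2\vol\cD)\}$, whereas the paper argues in the contrapositive (for every $\mu$ the non--degeneracy probability being $<\mu$ forces $\E[Y_k]<(2\vol\cD+1)\mu$ for some $k$, so the infimum is $0$). You also spell out the last assertion in the i.\,i.\,d.\ case, which the paper treats as an immediate consequence and does not write out.
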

\begin{proof}
  Assume that there exists $\mu\in\Ioc01$ with
  \begin{equation*}
    \inf_{k\in\cL}\Prob\bigl\{
      \vol{\{x\in\cDc\mid u_{\lambda_k}(x)\ge\mu\}}\ge\mu
    \bigr\}
      \ge\mu\text.
  \end{equation*}
  Since $\mu\le1$, for all $k\in\cL$, we have using \v Ceby\v sev's inequality
  \begin{align*}
    \E\Int_{\cDc}\min\{u_{\lambda_k}(x),1\}dx&
    \ge\E\Int_{\cDc}\mu\ifu{\{x\in\cDc\mid u_{\lambda_k}(x)\ge\mu\}}dx\\&
    =\mu\E\vol{\{x\in\cDc\mid u_{\lambda_k}(x)\ge\mu\}}\\&
    \ge\mu^2\Prob\bigl\{\vol{\{x\in\cDc\mid
      u_{\lambda_k}(x)\ge\mu\}}\ge\mu\bigr\}\\&
    \ge\mu^3
    >0\text.
  \end{align*}
  For the other direction, assume that for all $\mu\in\Ioc01$,
  \begin{equation*}
    \inf_{k\in\cL}\Prob\bigl\{
      \vol{\{x\in\cDc\mid u_{\lambda_k}(x)\ge\mu\}}\ge\mu
    \bigr\}
      <\mu\text,
  \end{equation*}
  Then there exists $k\in\cL$ such that
  $\Prob\bigl\{
    \vol{\{x\in\cDc\mid u_{\lambda_k}(x)\ge\mu\}}\ge\mu\bigr\}<\mu$.
  Then, we deduce
  \begin{align*}
    \E\Int_{\cDc}\min\{u_{\lambda_k}(x),1\}dx&
    \le\E\bigl[\mu\vol\cDc
      +\vol{\{x\in\cDc\mid u_{\lambda_k}(x)\ge\mu\}}\bigr]\\&
    \le\mu\vol\cDc+\mu
      +\vol\cDc\Prob\{\vol{\{x\in\cDc\mid u_{\lambda_k}(x)\ge\mu\}}\ge\mu\}\\&
    <(2\vol\cDc+1)\mu\text.
  \end{align*}
  This proves the claimed equivalence.
\end{proof}

Since we no longer assume that the variables~$\lambda_k$
are identically distributed, the random operator family
will no longer be ergodic and the IDS may not be well-defined.
We therefore state the conclusion in the following theorem
directly on the probabilities of low eigenvalues
for finite volume restrictions of a suitable scale.
We will again consider only Mezincescu boundary conditions and
suppress the superscript~${}^M$.
\genericConstant{CexpII}%
\begin{thm}\label{thmabstract}
  Assume that $\Vper\in L^\infty(\bR^d,\bR)$,
  $\lambda_k\from\Omega\to\Omega_0$, $k\in\cL$,
  are independent random variables, that
  $u\from\Omega_0\times\bR^d\to\Ico0\infty$
  is non--degenerate, and that~$(H_\omega)$
  is defined by~\eqref{e-random-potential} and~\eqref{e-Hper-Homega}
  with $E_0:=\inf\sigma(\Hper)$.
  Then there exist $\CexpII,\delta\ge0$ and $E'>E_0$
  such that for all $E\in\Ioc{E_0}{E'}$
  \begin{equation}\label{expdecay}
    \Prob\bigl\{\omega\in\Omega:E_1(H_\omega^{L_E})\le E\bigr\}
      \le\exp\bigl(-\CexpII(E-E_0)^{-\frac d2}\bigr)\text,
  \end{equation}
  where $L_E=\floor[\big]{\sqrt{\delta/(E-E_0)}}$.
\end{thm}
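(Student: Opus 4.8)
The plan is to deduce \cref{thmabstract} from the \emph{proof} of \cref{thmbreather} rather than from its statement: we dominate the given random potential from below by a genuine (two--valued) breather potential $\tilde W_\omega\le W_\omega$, and then observe that the argument of \cref{subsectionProofBreather} used the single site potential only through the two properties collected in~\eqref{eq:OBdA}, which now hold for $\tilde W_\omega$ \emph{uniformly in $k\in\cL$}. As there, we first add a constant to~$\Vper$ so that $E_0=\inf\sigma(\Hper)=0$; it then suffices to bound $\Prob\{E_1(H_\omega^{L})\le E\}$ for small $E>0$, and the substitution $E\mapsto E-E_0$ recovers the general statement.

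Fix $\mu>0$ for which $u$ is $\mu$-non--degenerate, \ie\ \eqref{eq:non-degenerate} holds. For $\lambda\in\Omega_0$ put $A(\lambda):=\{x\in\cDc\mid u_\lambda(x)\ge\mu\}$; joint measurability of~$u$ makes $(\lambda,x)\mapsto\ifu{A(\lambda)}(x)$ jointly measurable. Set $\tilde u(\lambda,x):=\mu\ifu{A(\lambda)}(x)$, $\tilde W_\omega(x):=\sum_{k\in\cL}\tilde u(\lambda_k(\omega),x-k)$, and $\tilde H_\omega:=\Hper+\tilde W_\omega$. Since $A(\lambda)\subseteq\cDc$, the translated supports $A(\lambda_k)+k$ are pairwise disjoint, and $\tilde u(\lambda,\argmt)\le u_\lambda$ pointwise, so $0\le\tilde W_\omega\le W_\omega$; as $\tilde W_\omega$ is bounded, $\tilde H_\omega^{L,M}$ is well defined and $\tilde H_\omega^{L,M}\le H_\omega^{L,M}$ in the form sense (both forms have form domain $H^1(\Lambda_L)$ by~\eqref{eq:Wlocp}). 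The min-max principle gives $E_1(\tilde H_\omega^{L})\le E_1(H_\omega^{L})$, hence
\begin{equation*}
  \Prob\bigl\{E_1(H_\omega^{L})\le E\bigr\}\le\Prob\bigl\{E_1(\tilde H_\omega^{L})\le E\bigr\}\text.
\end{equation*}
Moreover, by \v Ceby\v sev's inequality exactly as in \cref{Enon-degenerate}, $\inf_{k\in\cL}\E[\vol A(\lambda_k)]\ge\mu^2>0$. Thus $\tilde u(\lambda_k,\argmt)=\mu\ifu{\supp\tilde u(\lambda_k,\argmt)}\le\mu\ifu\cDc$ a.\,e.\ and $\E[\vol\supp\tilde u(\lambda_k,\argmt)]\ge\mu^2$ for \emph{every} $k$: this is~\eqref{eq:OBdA} with a uniform lower bound replacing the single positivity statement, which is all the breather proof really needs.

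Now I would run \cref{subsectionProofBreather} for $\tilde H_\omega$ verbatim. Split $\tilde H_0^L:=-\Laplace^L+\Vper-\gamma_L$ and $\tilde V_\omega:=\tilde W_\omega+\gamma_L$ with $\gamma_L:=\Cgap/(2L^2)$; the ground state $\Psi_L$ of $\tilde H_0^L$ and the identities $E_1(\tilde H_0^L)=-\gamma_L$, $E_2(\tilde H_0^L)\ge\gamma_L$ (from~\eqref{e-Mezgap}, as in~\eqref{eq:E2}) are unchanged since they involve only~$\Hper$. With $X_k(\omega):=\int_{A(\lambda_k(\omega))}\abs{\Psi(x)}^2\,\dx\in\Icc01$ and $S_L:=\setsize{I_L}^{-1}\sum_{k\in I_L}X_k$, \cref{lem:XSV} applies word for word (it used only that each single site potential is $\{0,\mu\}$-valued and supported in $\cDc$, together with periodicity of~$\Psi$), so on $\{E_1(\tilde H_\omega^{L})\le E\}$ with $L\le\hat L_E:=\floor{\sqrt{\Cgap/(2E)}}$ — which forces $E<\gamma_L\le E_2(\tilde H_0^L)$ — \cref{corollary:Thirring} yields $\tfrac{\gamma_L}{2}S_L(\omega)\le E$, \ie\ $S_L(\omega)\le 4EL^2/\Cgap$. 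Put $c_0:=\Psi_-^2\mu^2\in(0,1]$, so $\E[X_k]\ge\Psi_-^2\E[\vol A(\lambda_k)]\ge c_0$ for all $k$, and set $L_E:=\floor{\sqrt{\Cgap c_0/(8E)}}$; for $E$ small one has $L_0\le L_E\le\hat L_E$ and $4EL_E^2/\Cgap\le c_0/2\le\tfrac12\E[S_{L_E}]$. Since the $X_k$, $k\in\cL$, are independent and $\Icc01$-valued with means $\ge c_0$, the exponential-moment (Chernov-type) estimate of \cref{LDP}, in its routine non-i.i.d.\ form (Hoeffding's inequality), gives $\Prob\{S_{L_E}\le c_0/2\}\le\exp\bigl(-\tfrac12c_0^2(2L_E+1)^d\bigr)$. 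Chaining with the reduction above and $(2L_E+1)^d\ge(2\sqrt{\delta/E}-1)^d\ge(\delta/E)^{d/2}$ for $E\le\delta:=\Cgap c_0/8$ yields~\eqref{expdecay} with $L_E=\floor{\sqrt{\delta/(E-E_0)}}$ and $\CexpII=\tfrac12c_0^2\delta^{d/2}$ after undoing the normalization $E_0=0$.

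The two genuinely new points — and essentially the only work — are: \emph{(i)} manufacturing the dominating breather potential $\tilde W_\omega$ with \emph{independent} and \emph{uniformly} non-trivial single site parts out of a merely non--degenerate $u$ (a measurability/bookkeeping check plus the \v Ceby\v sev estimate of \cref{Enon-degenerate}); and \emph{(ii)} upgrading the i.i.d.\ large--deviation bound \cref{LDP} to independent, bounded, non--identically distributed summands with uniformly positive means. I expect (ii) to be the place where one must be slightly careful, though it is standard (Hoeffding); and the conceptual content of (i) is exactly that~\eqref{eq:OBdA} was isolated in \cref{s:breather} for this purpose. Everything else — the form domination, the Thirring comparison via \cref{corollary:Thirring}, and the computation of \cref{lem:XSV} — transfers unchanged.
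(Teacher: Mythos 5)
Your proposal is correct and takes essentially the same route as the paper's own proof of \cref{thmabstract}: the truncated potential $\tilde u(\lambda,\cdot)=\mu\ifu{A(\lambda)}$ is exactly the image of $u_\lambda$ under the cut--off operator $A_\mu$ of \eqref{abschneideoperator}, the form domination $\tilde H_\omega^L\le H_\omega^L$ and the uniform lower bound $\E[X_k]\ge\Psi_-^2\mu^2$ are the same reduction, and the concentration step you label ``Hoeffding'' is precisely the paper's \cref{Bernstein} (same exponential--moment argument, merely with a slightly different numerical constant in the exponent). Nothing is missing; you have in effect reconstructed \cref{s:reduction}.
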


\begin{remark}
  In the case of an ergodic operator family~$(H_\omega)_{\omega\in\Omega}$,
  the IDS~$N$ exists, and a direct consequence of \cref{thmabstract}
  is Lifshitz behavior at~$E_0$:
  \begin{equation*}
    \ex\Cfront,\CexpII>0,E'>E_0\colon\fa E\in\Ioc{E_0}{E'}\colon
    N(E)\le\Cfront\exp\bigl(-\CexpII(E-E_0)^{-\frac d2}\bigr)
    \text.
  \end{equation*}
\end{remark}

\begin{example}[General breather model]
  Let $\Omega_0:=\Ico0\infty$ and $u_1\in L^\infty(\bR^d)$
  be non--negative, non--vanishing and compactly supported,
  and define for $\lambda\ge0$ the breather type single site potential
  \begin{equation*}
    u\from\Ico0\infty\times\bR^d\to\Ico0\infty\,\text,\quad
    u(\lambda,x):=\begin{cases}
      u_1(x/\lambda)    &(\lambda>0)\\
      0                 &(\lambda=0)\,\text.
    \end{cases}
  \end{equation*}
  Further let $\lambda_k$, $k\in\cL$, be independent and
  identically distributed non--negative random variables.
  Then, the family~$(H_\omega)_{\omega\in\Omega}$
  is ergodic, and consequently the IDS~$N$
  is well-defined and shows Lifshitz behavior as in \cref{thmbreather}.
\end{example}

\begin{proof}[Proof of \cref{thmabstract}]
  As in \cref{subsectionProofBreather} we use the fundamental domain
  $\cDc=\M\Ioo{-\frac12}{\frac12}^d$,
  the cube $\Lambda_L=\M\Ioo{-(L+\frac12)}{L+\frac12}^d$,
  and the index set $I_L=\cL\isect\Lambda_L$ with $L\in\bN$.
  The superscript~${}^L$ denotes the restriction of operators to~$\Lambda_L$
  with Mezincescu boundary conditions, cf.\ \cref{sectionboundaryconditions}.
  Without loss of generality we assume that $E_0=\inf\sigma(\Hper)=0$.
  \par
  We will reduce the model to a random
  \Schroedinger\ operator with simplified single site potential
  satisfying properties described in~\eqref{eq:OBdA}.
  For this purpose we employ the cut--off operator
  \begin{equation}\label{abschneideoperator}
    A_\mu\from L^p(\bR^d,\Ico0\infty)\to L^p(\bR^d,\{0,\mu\})\textq,
    A_\mu u:=\mu\ifu{\{q\in\cDc\mid u(q)\ge\mu\}}
  \end{equation}
  for a $\mu>0$ such that~$u$ is $\mu$-non--degenerate.
  The non--linear operator~$A_\mu$ is weakly measurable,
  since it maps measurable functions~$u$ to measurable images
  $\tilde u:=A_\mu u=\mu\cdot(\ifu{\Ico\mu\infty}\circ u)\ifu\cDc$.
  The random \Schroedinger\ operator
  \begin{equation*}
    \tilde H_\omega
      :=-\Laplace+\Vper+\tilde V_\omega
    \qtextq{with}
    \tilde V_\omega:=\sum_{k\in\cL}\tilde u_{\lambda_k(\omega)}(\argmt-k)
  \end{equation*}
  has bounded potential and is thus well defined on the domain of~$-\Laplace$.
  \par
  Since $\tilde u\le u$ and thereby $\tilde V_\omega\le V_\omega$, we have
  $\tilde H_\omega\le H_\omega$
  for $\Prob$-almost all~$\omega\in\Omega$.
  And since Mezincescu boundary conditions depend on~$\Vper$,
  and the periodic background is the same for $(\tilde H_\omega)$ and
  $(H_\omega)$, for all the finite volume restrictions
  \begin{equation*}
    \tilde H_\omega^L\le H_\omega^L
  \end{equation*}
  for $\Prob$-a.\,a.~$\omega\in\Omega$ and all $L\in\bN$.
  It therefore suffices to show the upper bound of the theorem for
  $\tilde H_\omega$ instead of~$H_\omega$.
  By the non--degeneracy of the single site potential,
  we henceforth reduce ourselves without loss of generality to the situation%
  ~\eqref{eq:OBdA} and skip from now on the $\tilde\ $ in the notation.
  \par
  The random variables~$X_k$, $k\in\cL$, defined in~\eqref{XS},
  are now no longer identically distributed.
  However, due to~\eqref{eq:non-degenerate},
  their expectations still share a positive infimum:
  \begin{equation*}
    1\ge\E[X_k]
      =\E\int_{\supp u_{\lambda_k}}\abs{\Psi(x)}^2\,\dx
      \ge\Psi_-^2\E\vol{\supp u_{\lambda_k}}
      \ge\Psi_-^2\mu^2
      =:\beta
      >0\text.
  \end{equation*}
  Of course, the averages~$S_L$, see~\eqref{XS}, satisfy $\E[S_L]\ge\beta$, too.
  We adapt the definition of~$L_E$, substituting~$\beta$ for~$\E[X_0]$:
  \begin{equation*}
    L_E:=\floor*{\sqrt{\frac{\Cgap\beta}{8E}}}\text{, \ie\ }
    \delta:=\frac{\Cgap\beta}8\text.
  \end{equation*}
  With this change we can adapt a large part of the proof of
  \cref{thmbreather}.
  In particular, we get for all $E>0$ small enough
  \begin{equation*}
    \Prob\{\omega\colon E_1(H_\omega^{L_E})\le E\}
      \le\Prob\{S_{L_E}\le\beta/2\}\text.
  \end{equation*}
  \Cref{LDP} is replaced by the Bernstein inequality \cref{Bernstein}
  and gives
  \begin{align*}
    \Prob\{\omega\colon E_1(H_\omega^{L_E})\le E\}&
      \le\Prob\{S_{L_E}\le\tfrac\beta2\}\\&
      \le\exp\bigl(-\tfrac{\beta^2}{16}\setsize{I_{L_E}}\bigr)
      \le\exp\bigl(-\CexpII E^{-d/2}\bigr)
  \end{align*}
  with $\CexpII:=\beta^2\delta^{d/2}/16$.
\end{proof}

\begin{lemma}[cf.\ \cite{Schwarzenberger-12}]\label{Bernstein}
  Assume $\beta\in\Ioc01$ and that the independent random variables
  $X_k\from\Omega\to\Icc01$ satisfy
  $\E[X_k]\ge\beta$, $k\in\{1,\dotsc,n\}$.
  Then we have, for $S_n:=\frac1n\sum_{k=1}^nX_k$,
  \begin{equation*}
    \Prob\bigl\{S_n\le\tfrac\beta2\bigr\}
      \le\exp\bigl(-\tfrac{\beta^2}{16}n\bigr)\text.
  \end{equation*}
\end{lemma}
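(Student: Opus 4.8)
The plan is to run the exponential Chebyshev (Chernov) argument, exactly as in the proof of \cref{LDP}, but this time exploiting the a~priori bound $X_k\in\Icc01$ so that the resulting rate constant depends only on~$\beta$ and not on the individual laws of the~$X_k$. First, for any $t>0$, independence gives
\begin{equation*}
  \Prob\bigl\{S_n\le\tfrac\beta2\bigr\}
    =\Prob\bigl\{\e^{-tnS_n}\ge\e^{-tn\beta/2}\bigr\}
    \le\e^{tn\beta/2}\prod_{k=1}^n\E\bigl[\e^{-tX_k}\bigr]\text.
\end{equation*}

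Next I would bound each moment generating factor. Since $x\mapsto\e^{-tx}$ is convex, it lies below its chord on the interval~$\Icc01$, so $\e^{-tX_k}\le1-(1-\e^{-t})X_k$ pointwise; taking expectations, using $\E[X_k]\ge\beta$ and then $1-u\le\e^{-u}$, one obtains $\E[\e^{-tX_k}]\le\exp(-\beta(1-\e^{-t}))$. Multiplying the $n$ factors and combining with the first display yields
\begin{equation*}
  \Prob\bigl\{S_n\le\tfrac\beta2\bigr\}
    \le\exp\bigl(-n\beta\bigl(1-\e^{-t}-\tfrac t2\bigr)\bigr)\text.
\end{equation*}
It then remains to optimize over~$t$: the maximizer of $1-\e^{-t}-t/2$ is $t=\ln2$, which gives the value $\tfrac12(1-\ln2)$, hence $\Prob\{S_n\le\beta/2\}\le\exp(-\tfrac{1-\ln2}2\beta n)$. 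Since $\beta\in\Icc01$ implies $\beta\ge\beta^2$, and $\tfrac{1-\ln2}2>\tfrac1{16}$, the right-hand side is bounded by $\exp(-\tfrac{\beta^2}{16}n)$, which is the assertion.

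I do not expect a genuine obstacle here: the proof is a textbook concentration estimate, and the only point requiring a little care is the chord (convexity) bound for $\e^{-tx}$ on~$\Icc01$ — this is precisely what makes the estimate \emph{uniform} over the distributions of the~$X_k$ — together with being sufficiently generous with the numerical constant ($1/16$ instead of the optimal $(1-\ln2)/2$) so that the discrepancy between $\beta$ and $\beta^2$ is absorbed via $\beta\le1$. One could alternatively just quote \cite{Schwarzenberger-12}.
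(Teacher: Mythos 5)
Your proof is correct and runs the same Chernoff--Markov strategy as the paper, but with a different moment-generating-function bound. The paper centers $Y_k:=X_k-\E[X_k]\in\Icc{-1}1$ and bounds $\E[\e^{hY_k}]\le\e^{h^2}$ for $\abs h\le\tfrac12$ via a crude Taylor-series estimate, then optimizes Markov's inequality at $t=\beta n/4$ to land directly on $\exp(-\beta^2n/16)$. You instead work with the uncentered $X_k$ and bound $\E[\e^{-tX_k}]$ by the chord (convexity) inequality $\e^{-tx}\le1-(1-\e^{-t})x$ on $\Icc01$, which yields the intermediate estimate $\Prob\{S_n\le\beta/2\}\le\exp\bigl(-n\beta(1-\e^{-t}-t/2)\bigr)$ and, after optimizing at $t=\ln2$, the rate $\exp\bigl(-\tfrac{1-\ln2}2\beta n\bigr)$. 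That bound is genuinely tighter for small $\beta$ (linear rather than quadratic in $\beta$ in the exponent); you then deliberately weaken it using $\beta\ge\beta^2$ and $\tfrac{1-\ln2}2>\tfrac1{16}$ to recover the stated constant. Both arguments are elementary and self-contained: yours buys a sharper exponent at no extra cost, while the paper's power-series route is the more standard Bernstein-type estimate and produces $1/16$ without any post hoc rounding. Either version serves equally well downstream, where only the qualitative form $\exp(-cn)$ matters.
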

\begin{proof}
  Let
  \begin{equation*}
    Y_k:=X_k-\E[X_k]\in\Icc{-1}1\quad(k\in\bN)\text.
  \end{equation*}
  The exponential moments of~$Y_k$ are, for $\abs h\le1/2$, bounded by
  \begin{align*}
    \E[\exp(hY_k)]&
      =\sum_{m=0}^\infty\frac{h^m\E[Y_k^m]}{m!}
      \le1+\frac{h^2}2\sum_{m=0}^\infty\abs h^m
      \le1+h^2
      \le\e^{h^2}
    \text.
  \end{align*}
  Therefore we have by independence
  \begin{align*}
    \E\bigl[\exp\bigl(t(S_n-\E S_n)\bigr)\bigr]&
      =\prod_{k=1}^n
        \E\bigl[\exp\bigl(\tfrac tnY_k\bigr)\bigr]
      \le\exp\bigl(t^2/n\bigr)
  \end{align*}
  for all $\abs t\le\frac n2$.
  Now we employ Markov's inequality with a parameter
  $t\in\Icc0{\tfrac n2}$:
  \begin{align*}
    \Prob\{S_n\le\tfrac\beta2\}&
      \le\Prob\{S_n-\E S_n\le-\tfrac\beta2\}\\&
      =\Prob\{\exp(-t(S_n-\E S_n))\ge\exp(\beta t/2)\}\\&
      \le\exp(-\beta t/2)\E[\exp(-t(S_n-\E S_n))]\\&
      \le\exp\bigl(t^2/n-\beta t/2\bigr)\text.
  \end{align*}
  The lemma follows with the choice
  $t=\tfrac\beta4n\in\Icc0{\tfrac n2}$.
\end{proof}

We now present conditions that guarantee that the \Schroedinger\ operators
have indeed spectrum at~$E_0$ with positive probability.
\begin{lemma}\label{upperbound}
  Fix $p>\max\{2,d/2\}$.
  Let $\lambda_k\from\Omega\to\Omega_0$, $k\in\cL$,
  be a family of independent random variables on the probability space
  $(\Omega,\cA,\Prob)$ and $u\from\Omega_0\times\bR^d\to\Ico0\infty$ measurable.
  We define~$W_\omega$ by~\eqref{e-random-potential},
  assume~\eqref{eq:Wlocp} and that, for all $\kappa>0$ and all $k\in\cL$,
  we have
  \begin{equation}\label{eq:0insupp}
    \Prob\{\norm[p]{u_{\lambda_k}}\le\kappa\}>0\text.
  \end{equation}
  Then, the operator~$H_\omega$ defined by \eqref{e-Hper-Homega}
  satisfies for all $\varepsilon>0$
  \begin{equation*}
    \Prob\{\omega\in\Omega:\inf\sigma(H_\omega)\le E_0+\varepsilon\}>0\text.
  \end{equation*}
\end{lemma}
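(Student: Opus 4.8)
Fix $\varepsilon>0$. The plan is to exhibit an event of positive probability on which $H_\omega$ admits a single normalised trial state of energy at most $E_0+\varepsilon$; the claim then follows from the min-max principle. As in the proof of \cref{E0inf} we may add a constant to $\Vper$ and assume $E_0=0$, and we use the normalised trial function $\psi_{0,L}\in\dom(\Laplace)=\dom(H_\omega)$ of \eqref{eq:psixL}, centred at the origin. By \cref{approxEF}, $\spr{\psi_{0,L}}{\Hper\psi_{0,L}}\le\norm[2]{(\Hper-E_0)\psi_{0,L}}\le\perappEF/L$, so I would first fix $L\in\bN$ large with $\perappEF/L\le\varepsilon/2$. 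Since $W_\omega\ge0$ it then suffices to produce a positive-probability event on which $\spr{\psi_{0,L}}{W_\omega\psi_{0,L}}\le\varepsilon/2$.

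To control the potential term I would split $W_\omega=W_\omega'+W_\omega''$ along a radius $R>L$ still to be chosen, with $W_\omega':=\sum_{k\in I_R}u_{\lambda_k}(\argmt-k)$ the near part indexed by $I_R=\cL\isect\Lambda_R$ and $W_\omega'':=\sum_{k\in\cL\sm I_R}u_{\lambda_k}(\argmt-k)$ the far part; both are non-negative, so $\spr{\psi_{0,L}}{W_\omega\psi_{0,L}}=\spr{\psi_{0,L}}{W_\omega'\psi_{0,L}}+\spr{\psi_{0,L}}{W_\omega''\psi_{0,L}}$. On the event $B:=\bigl\{\norm[p]{u_{\lambda_k}}\le\kappa\text{ for all }k\in I_R\bigr\}$, the triangle inequality in $L^p$ on each unit cell gives $\norm[p]{W_\omega'\ifu{\cD+j}}\le\setsize{I_R}\kappa=(2R+1)^d\kappa$ for every $j\in I_L$, and combining this with $\norm[\infty]{\psi_{0,L}}\le\Psi_+L^{-d/2}$ (from \eqref{eq:lowerPsimolly} and $\norm[\infty]{\Psi}=\Psi_+$) and Hölder's inequality cell by cell yields $\spr{\psi_{0,L}}{W_\omega'\psi_{0,L}}\le c\,(2R+1)^d\kappa$ for a constant $c$ depending only on $d$, $p$, $\Psi_+$, $\cD$. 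By \eqref{eq:0insupp}, $B$ is a \emph{finite} intersection of events of positive probability, hence $\Prob(B)>0$, and $B$ depends only on $\{\lambda_k:k\in I_R\}$.

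For the far part I would invoke \eqref{eq:Wlocp}, which guarantees $W_\omega=\sum_ku_{\lambda_k}(\argmt-k)\in L^1_{\mathrm{loc}}(\bR^d)$ for every $\omega$; since the summands are non-negative this forces $W_\omega''\dnto0$ pointwise a.e.\ as $R\to\infty$, and dominated convergence against the integrable function $W_\omega\abs{\psi_{0,L}}^2$ gives $\spr{\psi_{0,L}}{W_\omega''\psi_{0,L}}\to0$ for every $\omega$. Hence the events $D_R:=\bigl\{\spr{\psi_{0,L}}{W_\omega''\psi_{0,L}}\le\varepsilon/4\bigr\}$ increase to all of $\Omega$ as $R\to\infty$, so I would fix $R>L$ with $\Prob(D_R)\ge\tfrac12$; this $D_R$ depends only on $\{\lambda_k:k\in\cL\sm I_R\}$. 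With $R$ now fixed I would choose $\kappa>0$ small enough that $c\,(2R+1)^d\kappa\le\varepsilon/4$. Because $B$ and $D_R$ are determined by disjoint families of the independent variables $\lambda_k$, they are independent, so $\Prob(B\isect D_R)=\Prob(B)\Prob(D_R)>0$; and on $B\isect D_R$ we obtain $\spr{\psi_{0,L}}{H_\omega\psi_{0,L}}\le\varepsilon/2+\varepsilon/4+\varepsilon/4=\varepsilon$, whence $\inf\sigma(H_\omega)\le\varepsilon=E_0+\varepsilon$ by min-max.

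The step I expect to be the main obstacle is the far part $W_\omega''$. Since the $\lambda_k$ need not be identically distributed and the single-site potentials need not be compactly supported, controlling the $\lambda_k$ on a single finite box --- which is all the product structure buys with positive probability --- does not by itself make $W_\omega$ small near the origin: some additional input is needed to tame the contributions of distant sites. The right input is \eqref{eq:Wlocp}, which makes the monotone series defining $W_\omega$ genuinely convergent, hence its tail negligible; the argument then hinges on marrying this a.s.\ tail decay with the independence of near and far sites, so that both hypotheses of the lemma are genuinely used.
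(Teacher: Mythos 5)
Your proposal is correct and follows essentially the same route as the paper's proof: the variational characterisation with the trial function $\psi_{x,L}$, a near/far split of $W_\omega$ at radius $R$, dominated convergence driven by \eqref{eq:Wlocp} for the far part, and hypothesis \eqref{eq:0insupp} together with independence over the finite index set $I_R$ for the near part. The one place you go beyond the paper is in making the probability argument airtight: the paper writes ``We choose $R$ so large that [the far part is small],'' which, read literally via pointwise dominated convergence, would allow $R$ to depend on $\omega$, and then the near-part event of positive probability would live on a random index set. Your fix --- observe that the events $D_R$ increase as $R\to\infty$ with $\Prob(D_R)\to1$, fix a deterministic $R$ with $\Prob(D_R)\ge\tfrac12$, and then use that $B$ and $D_R$ are measurable with respect to disjoint (hence independent) sub-$\sigma$-algebras generated by the $\lambda_k$ --- is the correct way to close this gap and is clearly the intended reading of the paper's argument; it is good that you flagged it explicitly as the crux of the proof.
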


If an ergodicity assumption holds
the spectrum of~$H_\omega$ is almost surely constant,
and positive probability implies in fact full probability.

\genericConstant{psiconst}
\begin{proof}
  Fix $\varepsilon>0$.
  We use the variational characterization
  $\inf\sigma(H_\omega)=\inf\limits_{\norm[2]\psi=1}\spr\psi{H_\omega\psi}$
  and choose the vector~$\psi:=\psi_{x,L}$ from \eqref{eq:psixL}.
  We know that $\spr{\psi_{x,L}}{\Hper\psi_{x,L}}\to E_0$ as $L\to\infty$.
  Thus, there exists an~$L\in\bN$ such that
  \begin{equation*}
    \abs{\spr{\psi_{x,L}}{\Hper\psi_{x,L}}}
    <E_0+\varepsilon/3\text.
  \end{equation*}
  We obtain
  \begin{equation*}
    \abs{\spr{\psi_{x,L}}{H_\omega\psi_{x,L}}}
    \le E_0+\varepsilon/3+\abs{\spr{\psi_{x,L}}{W_\omega\psi_{x,L}}}\text.
  \end{equation*}
  We need to estimate the remaining term
  \begin{equation*}
    \abs{\spr{\psi_{x,L}}{W_\omega\psi_{x,L}}}
    \le\norm[q]{\psi_{x,L}}\norm[p]{W_\omega\psi_{x,L}}
    \le\psiconst\norm[p]{W_\omega\ifu{\Lambda_L}}
    \text,
  \end{equation*}
  where $\psiconst:=\norm[q]{\psi_{x,L}}\Psi_+/\norm[2]{\Psi\molly{x,L}}$,
  see~\eqref{eq:psixL}.
  To this end, we introduce a parameter $R\in\bN$ which we choose soon
  and split $\ifu{\Lambda_L}=\sum_{j\in I_L}\ifu{\cDc+j}$:
  \begin{equation*}
    \norm[p]{W_\omega\ifu{\Lambda_L}}
    \le\sum_{k\in I_R}\norm[p]{u_{\lambda_k(\omega)}}
    +\sum_{j\in I_L}\Bignorm[p]{\sum_{k\in\cL\setminus I_R}
      u_{\lambda_k(\omega)}(\argmt-k)\ifu{\cDc+j}}\text.
  \end{equation*}
  By~\eqref{eq:Wlocp}, we can use~$W_\omega$
  as dominating function to see that, for all $j\in\cL$,
  \begin{equation*}
    \Bignorm[p]{\sum_{k\in\cL\setminus I_R}
      u_{\lambda_k(\omega)}(\argmt-k)\ifu{\cDc+j}}
    \xto{R\to\infty}0
  \end{equation*}
  by dominated convergence.
  We choose~$R$ so large that
  \begin{equation*}
    \sum_{j\in I_L}\Bignorm[p]{\sum_{k\in\cL\setminus I_R}
      u_{\lambda_k(\omega)}(\argmt-k)\ifu{\cDc+j}}
    \le\frac\varepsilon{3\psiconst}\text.
  \end{equation*}
  All we have left to do is to make sure that
  \begin{equation*}
    \sum_{k\in I_R}\norm[p]{u_{\lambda_k(\omega)}}
    \le\frac\varepsilon{3\psiconst}\text,
  \end{equation*}
  too.
  By assumption~\eqref{eq:0insupp},
  this happens with positive probability.
\end{proof}

\section{Complementary lower bound on the IDS}
\label{s:lower-IDS-bound}
For an ergodic operator family $(H_\omega)_{\omega\in\Omega}$
the IDS~$N$ exists and in the setting of \cref{thmabstract}
exhibits a Lifshitz bound at~$E_0$.
Here we complement the statement with a lower bound.
If we strive only for the exponent in the exponent,
we have by l'H\^opital's rule
\begin{multline*}
  \limsup_{E\dnto E_0}\frac{\ln\bigl(-\ln(N(E))\bigr)}{\ln(E-E_0)}
    \le\lim_{E\dnto E_0}
      \frac{\ln\bigl(\Cexp(E-E_0)^{-\frac d2}\bigr)}
           {\ln(E-E_0)}\\
    =\lim_{E\dnto E_0}
      \frac{(E-E_0)(-\frac d2\Cexp(E-E_0)^{-\frac d2-1})}
           {\Cexp(E-E_0)^{-\frac d2}}
    =-\frac d2\text.
\end{multline*}
Under moderate additional conditions, \cref{thmlowerbound}
shows that the limit of the logarithms actually exists and equals~$-d/2$.
We use the norm
\begin{equation*}
  \norm[\ell^\infty(L^p)]{f}
  :=\sup_{k\in\cL}\norm[p,\cD+k]f\text.
\end{equation*}

\begin{thm}\label{thmlowerbound}
  Let $(H_\omega)_{\omega\in\Omega}$ be a random operator as in
  \eqref{e-random-potential}--\eqref{e-Hper-Homega}
  Assume that the random variables~$\lambda_k$, $k\in\cL$, are i.\,i.\,d.,
  and that the single site potential has a summable decay:
  \begin{equation}\label{decay}
    \exists C,\epsilon>0\colon\forall k\in\cL\colon
    \norm[p,\cD+k]{u_{\lambda_0}}
      \le C\bigl(1+\norm k\bigr)^{-(d+\epsilon)}\qtext{a.\,s.,}
  \end{equation}
  and low values of the single site potential are not too improbable:
  \begin{equation}\label{poly}
    \exists\alpha_0,\eta>0\colon\forall\alpha\in\Icc0{\alpha_0}\colon
    \Prob\bigl\{\norm[\ell^\infty(L^p)]{u_{\lambda_0}}\le\alpha\bigr\}
      \ge\alpha^\eta\text.
  \end{equation}
  Then the IDS $N\from\bR\to\bR$ exists by ergodicity, and we have
  \begin{equation}\label{liminf}
    \liminf_{E\dnto E_0}\frac{\ln(-\ln(N(E)))}{\ln(E-E_0)}
      \ge-\frac d{\min\{2,\epsilon\}}\text.
  \end{equation}
\end{thm}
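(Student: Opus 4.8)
The plan is to prove the complementary lower bound
\begin{equation*}
  N(E)\ge\exp\bigl(-C(E-E_0)^{-d/\min\{2,\epsilon\}}\,\abs{\ln(E-E_0)}\bigr)
  \qquad(E-E_0\text{ small}),
\end{equation*}
from which \eqref{liminf} follows at once, the logarithmic factor being swallowed by the outer logarithm in $\ln(-\ln N(E))$. By \eqref{eq:IDSsupL} (valid at continuity points of~$N$, hence by monotonicity of~$N$ sufficient for all~$E$) we have, for any $L\in\bN$,
\begin{equation*}
  N(E)\ge\E[N_L^D(E,H_{\argmt})]
    \ge\abs{\Lambda_L}^{-1}\Prob\{E_1(H_\omega^{L,D})\le E\},
\end{equation*}
so it suffices to exhibit a scale $L=L_E$ and an event of sufficiently large probability on which $E_1(H_\omega^{L,D})\le E$.

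As test function I would take $\psi_L:=\Psi\phi_L/\norm[2]{\Psi\phi_L}\in H_0^1(\Lambda_L)$, where $\Psi$ is the periodic ground state from \eqref{eqPsi} (so $\Hper\Psi=E_0\Psi$ and $0<\Psi_-\le\Psi\le\Psi_+$ by \eqref{eq:harnack}) and $\phi_L$ is a positive function in $H_0^1(\Lambda_L)$ adapted to the parallelepiped~$\Lambda_L$ (a product of cosines), normalised so that $\norm[\infty]{\phi_L}=1$, $\norm[2]{\phi_L}^2\asymp L^d$ and $\norm[2]{\grad\phi_L}^2\asymp L^{d-2}$. Since $\phi_L$ vanishes on $\partial\Lambda_L$, integrating by parts with $\Hper\Psi=E_0\Psi$ gives the ground-state substitution identity
\begin{equation*}
  \int_{\Lambda_L}\bigl(\abs{\grad(\Psi\phi_L)}^2+\Vper\Psi^2\phi_L^2\bigr)\dx
    =E_0\int_{\Lambda_L}\Psi^2\phi_L^2\dx+\int_{\Lambda_L}\Psi^2\abs{\grad\phi_L}^2\dx,
\end{equation*}
whence $\spr{\psi_L}{(\Hper^{L,D}-E_0)\psi_L}\le(\Psi_+/\Psi_-)^2\,\norm[2]{\grad\phi_L}^2/\norm[2]{\phi_L}^2\le CL^{-2}$; this is the step where a bare cut-off function would only yield $L^{-1}$ and thus the wrong Lifshitz exponent. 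For the random part, H\"older's inequality together with $\norm[\infty]{\psi_L}\le C\Psi_+\Psi_-^{-1}L^{-d/2}$ gives $\spr{\psi_L}{W_\omega\psi_L}\le C\norm[p]{W_\omega\ifu{\Lambda_L}}L^{-d/p}$. Now split $W_\omega\ifu{\Lambda_L}$ into the contributions of sites $k\in I_{2L}$ and of sites $k\in\cL\sm I_{2L}$. For the far sites $\dist(k,\Lambda_L)\gtrsim\norm k$, so \eqref{decay} yields $\norm[p]{\sum_{k\notin I_{2L}}u_{\lambda_k(\omega)}(\argmt-k)\ifu{\Lambda_L}}\le CL^{d/p}\sum_{\norm k>2L}\norm k^{-(d+\epsilon)}\le CL^{d/p-\epsilon}$. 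On the event $\Omega_{L,\alpha}:=\{\omega\colon\norm[\ell^\infty(L^p)]{u_{\lambda_k(\omega)}}\le\alpha\ \text{for all}\ k\in I_{2L}\}$, combining the bound $\norm[p,\cD+j]{u_{\lambda_k}}\le\alpha$ with the almost sure decay \eqref{decay} and summing the minima in~$\ell^p$ gives $\norm[p]{u_{\lambda_k}}\le C\alpha^{\theta}$ with $\theta:=1-\tfrac d{p(d+\epsilon)}\in\Ioo01$, so the near sites contribute at most $CL^d\alpha^{\theta}$. Altogether, on~$\Omega_{L,\alpha}$,
\begin{equation*}
  E_1(H_\omega^{L,D})\le\spr{\psi_L}{H_\omega^{L,D}\psi_L}
    \le E_0+CL^{-2}+CL^{-\epsilon}+CL^{d(1-1/p)}\alpha^{\theta}.
\end{equation*}

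Put $m:=\min\{2,\epsilon\}$ and choose $L=L_E:=\ceil{K(E-E_0)^{-1/m}}$ with $K$ a large enough absolute constant so that $CL_E^{-2}+CL_E^{-\epsilon}\le\tfrac12(E-E_0)$, and $\alpha=\alpha_E:=\bigl(\tfrac{E-E_0}{2CL_E^{d(1-1/p)}}\bigr)^{1/\theta}$, which is a fixed positive power of $E-E_0$ and hence lies in $\Icc0{\alpha_0}$ once $E-E_0$ is small. Then the right-hand side above is $\le E$, i.e.\ $E_1(H_\omega^{L_E,D})\le E$ on $\Omega_{L_E,\alpha_E}$. By independence of the~$\lambda_k$ and \eqref{poly},
\begin{equation*}
  \Prob(\Omega_{L_E,\alpha_E})\ge\alpha_E^{\,\eta\setsize{I_{2L_E}}}
    =\alpha_E^{\,\eta(4L_E+1)^d},
\end{equation*}
hence $N(E)\ge\abs{\Lambda_{L_E}}^{-1}\alpha_E^{\eta(4L_E+1)^d}$. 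Taking $-\ln$ and inserting $L_E\asymp(E-E_0)^{-1/m}$ and $\ln(1/\alpha_E)=O(\abs{\ln(E-E_0)})$ yields $-\ln N(E)\le C(E-E_0)^{-d/m}\abs{\ln(E-E_0)}$ for $E$ near~$E_0$; applying~$\ln$ once more gives $\ln(-\ln N(E))\le\tfrac dm\abs{\ln(E-E_0)}+\ln\abs{\ln(E-E_0)}+O(1)$, and dividing by $\ln(E-E_0)<0$ and letting $E\dnto E_0$ produces $\liminf_{E\dnto E_0}\frac{\ln(-\ln N(E))}{\ln(E-E_0)}\ge-d/m$, which is \eqref{liminf}.

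The main obstacle is the joint calibration of the two scales rather than any isolated estimate: $L_E$ must be large enough that both the kinetic contribution $\sim L_E^{-2}$ and the uncontrollable potential tail $\sim L_E^{-\epsilon}$ stay below $E-E_0$, while $\alpha_E$ must be small enough (a power of $E-E_0$) to kill the bulk of the potential yet not so small that the product over the $\asymp L_E^d$ sites in \eqref{poly} decays faster than the target $\exp(-C(E-E_0)^{-d/m}\abs{\ln(E-E_0)})$. Correctly tracking the interpolation exponent~$\theta$ obtained by playing \eqref{poly} against \eqref{decay}, and verifying that the stray factor $\abs{\ln(E-E_0)}$ is absorbed by the outer logarithm in \eqref{liminf} --- which is exactly why the theorem is phrased via $\ln(-\ln N)$ and not via a clean stretched exponential --- is where care is needed. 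A secondary but essential point is the ground-state substitution identity above: it is what makes the Dirichlet Rayleigh quotient of the periodic part decay like $L^{-2}$, so that the exponent is $d/2$ (and not $d$) in the regime $\epsilon\ge2$.
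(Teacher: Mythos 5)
Your argument is correct and follows essentially the same route as the paper: lower-bound $N(E)$ by a probability that $E_1(H_\omega^{L,D})$ is small via \eqref{eq:IDSsupL} and Chebyshev, use the ground-state-substitution identity with the periodic ground state $\Psi$ to get the crucial $L^{-2}$ control of the Dirichlet Rayleigh quotient (this is precisely Lemma~\ref{lemmablubb}), apply H\"older and a near/far split using \eqref{decay} for the random part, and close with \eqref{poly}, independence, and the double-logarithm limit. The only real difference is organizational: the paper keeps two separate scales $L_E\asymp(E-E_0)^{-1/2}$ and a buffer $R_E\asymp(E-E_0)^{-1/\epsilon}$ and bounds per-cell norms $\norm[p,\cD+k]{W_\omega}$ via Remark~\ref{remarkdecay}, whereas you merge both into a single $L_E\asymp(E-E_0)^{-1/\min\{2,\epsilon\}}$ and absorb the decay through the interpolation exponent $\theta=1-\tfrac{d}{p(d+\epsilon)}$; both calibrations yield the same effective box size $\asymp(E-E_0)^{-d/\min\{2,\epsilon\}}$ and the same logarithmic loss, swallowed by the outer logarithm.
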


\genericConstant{Cdecay}%
\begin{remark}\label{remarkdecay}
  The decay condition~\eqref{decay} has the following consequence.
  There exists $\Cdecay\ge0$
  such that for almost all $\omega\in\Omega$ and $R>0$
  \begin{align*}
    \norm[p,\cD]{W_\omega}&
      =\Bignorm[p,\cD]{\sum_{k\in\cL}u_{\lambda_k(\omega)}(\argmt-k)}
      \le\sum_{k\in\cL}\norm[p,\cD+k]{u_{\lambda_k(\omega)}}\\&
      \le\frac{\Cdecay}{\epsilon R^\epsilon}+\sum_{k\in I_R}
        \norm[\ell^\infty(L^p)]{u_{\lambda_k(\omega)}}\text.
  \end{align*}
  We can hereby control the norm of~$W_\omega$ on~$\cD$
  with the norms of the single site potentials in a box of side length~$2R+1$,
  where~$R$ is determined by the allowed error.
\end{remark}

\begin{remark}
  The requirement~\eqref{poly}
  is a quantitative version of~\eqref{supportcondition}.
  Note also, that in the case $\epsilon\ge2$
  the limit exists and is equal to~$-d/2$.
  In specific models (alloy type with long range single site potentials)
  one can derive upper bounds on the IDS which match the bound~\eqref{liminf}
  also in the case $\epsilon\in\Ioo02$,
  cf.~\cite{Pastur-77,KirschM-07,Mezincescu-87}
\end{remark}

\begin{proof}[Proof of \cref{thmlowerbound}]
The proof follows the line of \cite{KirschS-86}.
We denote the restriction of $H_\omega$ to $\Lambda_L$
with Dirichlet boundary condition by $H_\omega^{L,D}$.
By \eqref{eq:IDSsupL} and \v Ceby\v sev's inequality we see that
\begin{align}
  N(E)&
    \ge\E\bigl[N_L^D(E,H_{\argmt})\bigr]
    =\vol{\Lambda_L}^{-1}\E\bigl[n_L^D(E,H_{\argmt})\bigr]\notag\\&
    \ge\vol{\Lambda_L}^{-1}\Prob\bigl\{\omega\in\Omega\colon
      n_L^D(E,H_\omega)\ge1\bigr\}\notag\\&
    =\vol{\Lambda_L}^{-1}\Prob\bigl\{\omega\in\Omega\colon
      \inf\sigma(H_\omega^{L,D})\le E\bigr\}\text.
  \label{Cebycev}
\end{align}
For all $\varphi\in\dom(H_\omega^{L,D})\setminus\{0\}$
and a.\,a.\ $\omega\in\Omega$, we have
\begin{equation*}
  \inf\sigma(H_\omega^{L,D})
    \le\frac{\spr\varphi{H_\omega^{L,D}\varphi}}
            {\norm\varphi^2}\text.
\end{equation*}
To continue estimate \eqref{Cebycev},
we use a smoothly truncated Version $\varphi=\molly L\Psi$
of the periodic solution $\Psi\from\bR^d\to\bR$ of
$(-\Laplace+\Vper)\Psi=0$ with $\norm[2,\cD]\Psi=1$, see~\eqref{eqPsi}.
Here, $\molly L=\molly{}(\argmt/L)$ is the function from~\eqref{eq:molly},
\ie\ $\molly{}\in C^\infty(\bR^d,\Icc01)$ such that
$\molly{}\rstr{\cD}=1$ and $\supp(\molly{})\sse\Lambda_{1/2}=\M\Ioo{-1}1^d$.

\genericConstant{blubb}%
\begin{lemma}\label{lemmablubb}
  There exists a constant~$\blubb>0$
  such that for a.\,a.~$\omega\in\Omega$ and all $L\in\bN$
  \begin{equation*}
    \frac{\spr{\molly L\Psi}{H_\omega^{L,D}(\molly L\Psi)}}
         {\norm[2]{\molly L\Psi}^2}-E_0
    \le\frac{\spr{\molly L\Psi}{W_\omega\molly L\Psi}}
            {\norm[2]{\molly L\Psi}^2}
      +\frac{\blubb}{L^2}\text.
  \end{equation*}
\end{lemma}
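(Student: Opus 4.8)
The plan is to split off the deterministic part, reduce the claim to an estimate for the ``periodic defect'' of the trial state $\molly L\Psi$, and then gain the factor $L^{-2}$ from the ground state transformation. Since $\molly{}$ is smooth with $\supp\molly{}\subseteq\Lambda_{1/2}=\M\Ioo{-1}1^d$, the rescaled cut-off $\molly L=\molly{0,L}=\molly{}(\argmt/L)$ is supported in $\M\Ioo{-L}{L}^d$, whose closure is contained in $\Lambda_L$. Hence, by~\eqref{eqPsi} and~\eqref{eq:Wlocp}, $\molly L\Psi\in H^2(\Lambda_L)\isect H_0^1(\Lambda_L)=\dom(-\Laplace^{L,D})=\dom(H_\omega^{L,D})$, and its form value under $H_\omega^{L,D}$ agrees (after extension by~$0$) with the one under $\Hper+W_\omega$ on $\bR^d$. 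Writing $\Hper+W_\omega=(\Hper-E_0)+E_0+W_\omega$, this yields the identity
\begin{equation*}
  \frac{\spr{\molly L\Psi}{H_\omega^{L,D}(\molly L\Psi)}}{\norm[2]{\molly L\Psi}^2}-E_0
  =\frac{\spr{\molly L\Psi}{(\Hper-E_0)(\molly L\Psi)}}{\norm[2]{\molly L\Psi}^2}
    +\frac{\spr{\molly L\Psi}{W_\omega\molly L\Psi}}{\norm[2]{\molly L\Psi}^2}\text,
\end{equation*}
so the lemma reduces to proving $\spr{\molly L\Psi}{(\Hper-E_0)(\molly L\Psi)}\le\blubb L^{-2}\norm[2]{\molly L\Psi}^2$, an assertion that involves no randomness at all.

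The decisive step is the ground state transformation. By the discussion around~\eqref{eqPsi} and~\eqref{eq:groundstates}, $\Psi$ is the bounded, $\cL$-periodic, continuously differentiable ground state of $\Hper$, so $\Hper\Psi=E_0\Psi$ and the Leibniz rule gives, exactly as in the proof of \cref{approxEF},
\begin{equation*}
  (\Hper-E_0)(\molly L\Psi)=-(\Laplace\molly L)\Psi-2\nabla\molly L\cdot\nabla\Psi\text.
\end{equation*}
I would pair this with $\molly L\Psi$ and integrate by parts in the term containing $\Laplace\molly L$; there is no boundary contribution because $\molly L$ is compactly supported in the interior of $\Lambda_L$, and the two resulting terms involving $\nabla\molly L\cdot\nabla\Psi$ cancel, leaving
\begin{equation*}
  \spr{\molly L\Psi}{(\Hper-E_0)(\molly L\Psi)}=\int_{\bR^d}\abs{\nabla\molly L}^2\Psi^2\dx\text.
\end{equation*}
This identity is the crux of the argument: bounding the left-hand side crudely by Cauchy--Schwarz and \cref{approxEF} would only yield an $L^{-1}$ rate, whereas turning the first-order factor $\nabla\molly L$ into a squared one produces, after rescaling, the required extra power of~$L^{-1}$.

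It then remains to insert elementary scaling estimates. Using $\Psi(x)^2\le\Psi_+^2$ from~\eqref{eq:harnack} together with $\nabla\molly L(x)=L^{-1}(\nabla\molly{})(x/L)$,
\begin{equation*}
  \int_{\bR^d}\abs{\nabla\molly L}^2\Psi^2\dx
  \le\Psi_+^2\int_{\bR^d}\abs{\nabla\molly L}^2\dx
  =\Psi_+^2L^{d-2}\norm[2]{\nabla\molly{}}^2\text,
\end{equation*}
while~\eqref{eq:lowerPsimolly} (specialized to $x=0$) gives $\norm[2]{\molly L\Psi}^2\ge L^d$. Dividing proves the lemma with $\blubb:=\Psi_+^2\norm[2]{\nabla\molly{}}^2$. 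I do not expect a genuine obstacle here: the one place where care is needed is the ground state representation of the defect term---going through Cauchy--Schwarz loses a whole power of~$L$---and everything else is pure scaling. In particular the constant $\blubb$ is deterministic, and the inequality holds for every $\omega$ for which $W_\omega$ satisfies~\eqref{eq:Wlocp}, i.e.\ for $\Prob$-a.a.\ $\omega$, which is what the phrase ``a.\,a.\ $\omega$'' refers to.
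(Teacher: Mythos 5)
Your proof is correct and follows the same route as the paper's: reduce to the deterministic statement, apply the ground state transformation $(\Hper-E_0)(\molly L\Psi)=-(\Laplace\molly L)\Psi-2\nabla\molly L\cdot\nabla\Psi$, integrate the $\Laplace\molly L$ term by parts so the cross terms cancel to leave $\norm[2]{\Psi\nabla\molly L}^2$, and then rescale. The only cosmetic difference is the lower bound on the normalization: you invoke~\eqref{eq:lowerPsimolly} to get $\norm[2]{\molly L\Psi}^2\ge L^d$, while the paper bounds $\norm[2]{\molly L\Psi}^2\ge L^d\norm[2]{\molly{}}^2\Psi_-^2$, yielding a different but equally valid constant $\blubb$; your closing observation that Cauchy--Schwarz plus \cref{approxEF} would only give $L^{-1}$ even matches the remark the paper makes immediately after the lemma.
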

\begin{proof}
  It suffices to show
  \begin{equation*}
    \frac{\spr{\molly L\Psi}{(\Hper^{L,D}-E_0)(\molly L\Psi)}}
         {\norm[2]{\molly L\Psi}^2}
      \le\frac{\blubb}{L^2}\text.
  \end{equation*}
  We use $(-\Laplace+\Vper)\Psi=E_0\Psi$,
  the fact that $\Psi$ is real-valued and
  $\Psi(\bR^d)=\Icc{\Psi_-}{\Psi_+}\subseteq\Ioo0\infty$,
  cf.~\eqref{eq:harnack}:
  \begin{align*}
    \delta\etdef
     \spr{\molly L\Psi}{(-\Laplace+\Vper)(\molly L\Psi)}
        -E_0\norm[2]{\molly L\Psi}^2\\&
      =-\spr{\molly L\Psi}
            {(\Laplace\molly L)\Psi+2(\nabla\molly L)\nabla\Psi}\\&
      =-\spr{\molly L\Psi^2}{\Laplace\molly L}
        -2\spr{\molly L\Psi\nabla\Psi}{\nabla\molly L}\text.
  \end{align*}
  Now partial integration gives
  \begin{align*}
    \delta&=\spr{\nabla(\molly L\Psi^2)}{\nabla\molly L}
        -2\spr{\molly L\Psi\nabla\Psi}{\nabla\molly L}\\&
      =\spr{(\nabla\molly L)\Psi^2}{\nabla\molly L}
      =\norm[2]{\Psi\nabla\molly L}^2\text.
  \end{align*}
  Finally, the rescaling produces the needed factor:
  \begin{equation*}
    \delta
    \le\norm[2]{\nabla\molly L}^2\Psi_+^2
    =L^{-2}\norm[2]{\bigl((\nabla\molly{})({}\argmt/L)\bigr)}^2\Psi_+^2
    =L^{d-2}\norm[2]{\nabla\molly{}}^2\Psi_+^2
    \text.
  \end{equation*}
  Combined with
  \begin{align*}
    \norm[2]{\molly L\Psi}^2&
      \ge\norm[2]{\molly L}^2\Psi_-^2
      =L^d\norm[2]{\molly{}}^2\Psi_-^2\text,
  \end{align*}
  we see that
  $\blubb:=\bigl(\frac{\norm[2]{\nabla\molly{}}\Psi_+}
                   {\norm[2]{\molly{}}\Psi_-}\bigr)^2$
  is a valid choice.
\end{proof}
\begin{remark}
  Note, that \cref{lemmablubb} does neither follow from nor imply
  \cref{approxEF}, since
  \begin{equation*}
    \frac{\spr{\molly L\Psi}{(\Hper^{L,D}-E_0)(\molly L\Psi)}}
         {\norm[2]{\molly L\Psi}^2}
      \le\frac{\norm[2]{(\Hper-E_0)(\molly L\Psi)}}{\norm[2]{\molly L\Psi}}
      \le\frac\perappEF L
  \end{equation*}
  is not strong enough, while
  $\norm[2]{(\Hper-E_0)(\molly L\Psi)}$
  cannot be controlled by
  $\spr{\molly L\Psi}{(\Hper^{L,D}-E_0)(\molly L\Psi)}$.
\end{remark}

\Cref{lemmablubb} and the choice $L_E:=\ceil{\sqrt{2\blubb/(E-E_0)}}$
allow us to continue the estimate~\eqref{Cebycev}:
\begin{align}
  N(E)&
    \ge\vol{\Lambda_{L_E}}^{-1}
      \Prob\bigl\{\omega\colon \inf\sigma(H_\omega^{L_E,D})\le E\bigr\}\notag\\&
    \ge\vol{\Lambda_{L_E}}^{-1}\Prob\Bigl\{\omega\colon
      \frac{\spr{\molly{L_E}\Psi}{W_\omega\molly{L_E}\Psi}}
           {\norm[2]{\molly{L_E}\Psi}^2}
      +\frac{\blubb}{L_E^2}\le E-E_0\Bigr\}\notag\\&
    \ge\vol{\Lambda_{L_E}}^{-1}\Prob\Bigl\{\omega\colon
      \frac{\norm[1,\Lambda_{L_E}]{W_\omega\Psi^2}}
           {\norm[2]{\molly{L_E}\Psi}^2}
      \le\frac{E-E_0}2\Bigr\}\text.\label{downtoW}
\end{align}
Next we break~$\Lambda_{L_E}$
into copies of the fundamental domain~$\cD=\M\Ioo{-\frac12}{\frac12}^d$,
using $p,q\in\Icc1\infty$, $\frac1p+\frac1q=1$:
\begin{align*}&
  \frac{\norm[1,\Lambda_{L_E}]{W_\omega\Psi^2}}
       {\norm[2]{\molly{L_E}\Psi}^2}
  \le\frac{\norm[q,\Lambda_{L_E}]{\Psi^2}\norm[p,\Lambda_{L_E}]{W_\omega}}
          {L^d\vol{\cD}\Psi_-^2}\\&
  =\frac{(2L+1)^d}{L^d\vol\cD\Psi_-^2}
    \biggl(\frac1{\setsize{I_{L_E}}}
      \int_{\Lambda_{L_E}}\Psi(x)^{2q}\,\dx\biggr)^{\frac1q}
    \biggl(\frac1{\setsize{I_{L_E}}}
      \int_{\Lambda_{L_E}}W_\omega(x)^p\,\dx\biggr)^{\frac1p}\\&
  \le\frac{3^d\norm[q,\cD]{\Psi^2}}{\vol\cD\Psi_-^2}
    \biggl(\frac1{\setsize{I_{L_E}}}\sum_{k\in I_{L_E}}
      \int_{\cD+k}W_\omega(x)^p\,\dx\biggr)^{\frac1p}\text,
\end{align*}
for $\Prob$-a.\,a.~$\omega\in\Omega$, $L\in\bN$.
We return to inequality~\eqref{downtoW} and use
\genericConstant{Cpsi}%
$\Cpsi:=\frac{\vol\cD\Psi_-^2}{2\cdot3^d\norm[q,\cD]{\Psi^2}}$:
\begin{equation}\label{downtonormW}
  \begin{split}
    N(E)&
      \ge\vol{\Lambda_{L_E}}^{-1}\Prob\Bigl\{\omega\colon
        \Bigl(\frac1{\setsize{I_{L_E}}}\sum_{k\in I_{L_E}}
          \norm[\cD+k]{W_\omega(x)}^p\Bigr)^{\frac1p}
        \le(E-E_0)\Cpsi\Bigr\}\\&
      \ge\vol{\Lambda_{L_E}}^{-1}\Prob\bigl\{\omega\colon
        \fa k\in I_{L_E}\colon\norm[p,\cD+k]{W_\omega}\
        \le(E-E_0)\Cpsi\bigr\}\text.
  \end{split}
\end{equation}

The next step is to reduce the condition on $W_\omega$
to a condition the single site potentials~$u_{\lambda_k(\omega)}$,
using the decay estimate from \cref{remarkdecay}.
To guarantee $\norm[p,\cD+k]{W_\omega}\le(E-E_0)\Cpsi$ for all $k\in I_{L_E}$,
it suffices to establish
\begin{equation*}
  \frac{\Cdecay}{\epsilon R^\epsilon}
    \le\frac{(E-E_0)\Cpsi}2\qtextq{and}
  \norm[\ell^\infty(L^p)]{u_{\lambda_k(\omega)}}
    \le\frac{(E-E_0)\Cpsi}{2\setsize{I_{L_E+R}}}
\end{equation*}
for all $k\in I_{L_E+R}$.
The first condition is met for all
\begin{equation*}
  R\ge R_E:=\ceil[\Big]{\Bigl(
           \frac{2\Cdecay}{(E-E_0)\Cpsi\epsilon}
         \Bigr)^{1/\epsilon}}\text.
\end{equation*}

To establish the second condition, we use the equivalence relation
``$a_E\sim b_E$ as $E\dnto E_0$''
defined as $\lim_{E\dnto E_0}a_E/b_E\in\Ioo0\infty$.
Observe that, as $E\dnto E_0$,
\begin{equation*}
  L_E+R_E
    \sim(E-E_0)^{-1/2}+(E-E_0)^{-1/\epsilon}
    \sim(E-E_0)^{-1/\bar\epsilon}\text,
\end{equation*}
where we let $\bar\epsilon:=\min\{2,\epsilon\}$.
In view of the second condition, we note that
\begin{equation*}
  \frac{(E-E_0)\Cpsi}{2\setsize{I_{L_E+R_E}}}
  =\frac{(E-E_0)\Cpsi}{2(2(L_E+R_E)+1)^d}
  \sim\frac{E-E_0}{(E-E_0)^{-d/\bar\epsilon}}
  =(E-E_0)^{1+\frac d{\bar\epsilon}}
  \text.
\end{equation*}
Therefore, the second condition with $R=R_E$ is implied by
\genericConstant{Cssp}%
\begin{equation*}
  \norm[\ell^\infty(L^p)]{u_{\lambda_k(\omega)}}
    \le\Cssp(E-E_0)^{1+(d/\bar\epsilon)}
\end{equation*}
with a suitable $\Cssp>0$ and $E-E_0$ small enough.

We continue~\eqref{downtonormW}, using the independence of~$\lambda_k$
and~\eqref{poly}:
\begin{align*}
  N(E)&
    \ge\vol{\Lambda_{L_E}}^{-1}\Prob\bigl\{\omega\colon
    \fa k\in I_{L_E}\colon\norm[p,\cD+k]{W_\omega}\le(E-E_0)\Cpsi\bigr\}\\&
    \ge\vol{\Lambda_{L_E}}^{-1}\prod\nolimits_{k\in I_{L_E+R_E}}
      \Prob\bigl\{\norm[\ell^\infty(L^p)]{u_{\lambda_k}}
        \le\Cssp(E-E_0)^{1+(d/\bar\epsilon)}\bigr\}\\&
    \ge\vol{\Lambda_{L_E}}^{-1}\bigl(\Cssp(E-E_0)^{1+(d/\bar\epsilon)}
          \bigr)^{\eta\,\setsize{I_{L_E+R_E}}}\text,
\end{align*}
By assumption, the estimate for the probability works for $E-E_0<\alpha_0$.
\genericConstant{Cilere}%
\genericConstant{Clambda}%
With $\Cilere,\Clambda>0$ such that
$\setsize{I_{L_E+R_E}}\le\Cilere(E-E_0)^{-d/\bar\epsilon}$
and $\vol{\Lambda_{L_E}}^{-1}\ge \Clambda(E-E_0)^{d/2}$ we see
\begin{equation*}
  N(E)
    \ge\Clambda(E-E_0)^{d/2}
        \bigl(\Cssp(E-E_0)^{1+\frac d{\bar\epsilon}}
          \bigr)^{\eta\Cilere(E-E_0)^{-d/\bar\epsilon}}
    \text.
\end{equation*}
We isolate the topmost exponent by
\begin{multline*}
  \frac{\ln\bigl(-\ln(N(E))\bigr)}{\ln(E-E_0)}\ge\\
    \frac{\ln\bigl(-\ln(\Clambda(E-E_0)^{d/2})
     -\eta\Cilere(E-E_0)^{d/\bar\epsilon}
        \ln\bigl(\Cssp(E-E_0)^{1+\frac d{\bar\epsilon}}\bigr)
      \bigr)}{\ln(E-E_0)}\\
    \xto{E\dnto E_0}-\frac d{\bar\epsilon}\text,
\end{multline*}
as l'H\^opital's rule shows.
Note that $\ln(E-E_0)<0$ for $E-E_0<1$.
\end{proof}

\section{Initial length scale estimate and Anderson localization}
\label{s:localization}
We state and prove an initial length scale estimate.
Such estimates serve as base in an induction scheme
called multi-scale analysis to prove localization,
cf.~\cite{Stollmann-01,GerminetK-04}.
An initial length scale estimate follows from our main result,
i.\,e.\ low probability for low eigenvalues, \cref{thmabstract},
by a Combes--Thomas estimate, \cref{combesthomas}.
We therefore state the initial length scale estimate as a Corollary to
\cref{thmabstract}.

In this section,
we have to deal with Dirichlet and Mezincescu boundary conditions,
and we denote them explicitly again.
We further denote the distance between two sets ${B},{\tilde B}\subseteq\bR^d$ by
\begin{equation*}
  \dist({B},{\tilde B}):=\inf\bigl\{\abs{x-y}\bigm|x\in {A},y\in {\tilde B}\bigr\}\text.
\end{equation*}
\genericConstant{CTone}%
\genericConstant{CTtwo}%
We will use the following Combes--Thomas estimate,
as found \eg\ in \cite[Theorem~2.4.1 and Remark~2.4.3]{Stollmann-01},
with Dirichlet boundary conditions and adapt\-ed to our needs:
\begin{lemma}\label{combesthomas}
  Let $\omega\in\Omega$ and $E^+\in\bR$.
  Let further~$p>\max\{2,d/2\}$, $A$ a symmetric, positive definite matrix,
  and $V\in L_{\mathrm{loc,unif}}^p$.
  We consider the operator~$H:=-\divergence A\grad+V$
  and let $M_V\ge\sup_{x\in\bR^d}\norm[p]{V\ifu{[x,x+1]^d}}$.
  Then there exist $\CTone=\CTone(M_V,A,E^+)$
  and $\CTtwo=\CTtwo(M_V,A,E^+)$
  such that the conditions
  \begin{enumerate}[(i),nosep]
    \item $\Lambda\subseteq\bR^d$ an open cube,
      ${B},{\tilde B}\in\cB(\Lambda)$
      s.\,t.\ $\delta:=\dist({B},{\tilde B})>0$, and
    \item $E<E_1(H_\omega^{\Lambda,D})\le E^+$,
  \end{enumerate}
  imply the estimate
  \begin{equation*}
    \norm{\ifu {B}(E-H_\omega^{\Lambda,D})^{-1}\ifu {\tilde B}}
      \le\frac{\CTone}{E_1(H_\omega^{\Lambda,D})-E}
        \exp\bigl(-\CTtwo(E_1(H_\omega^{\Lambda,D})-E)\delta\bigr)\text.
  \end{equation*}
\end{lemma}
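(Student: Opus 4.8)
The plan is to run the standard Combes--Thomas deformation, following \cite[Theorem~2.4.1 and Remark~2.4.3]{Stollmann-01} and adapting the bookkeeping to the divergence--form operator and to the $L^p$-control~$M_V$ instead of an $L^\infty$ bound.

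First I set $\eta:=E_1(H_\omega^{\Lambda,D})-E>0$; by condition~(ii) and the variational principle $E\notin\sigma(H_\omega^{\Lambda,D})$ and $\norm{(E-H_\omega^{\Lambda,D})^{-1}}=\eta^{-1}$. I fix the Lipschitz weight $\phi(x):=\min\{\dist(x,\tilde B),\delta\}$, which has $\norm[\infty]{\grad\phi}\le1$, vanishes on $\tilde B$, and equals $\delta$ on $B$ (every point of~$B$ lies at distance $\ge\dist(B,\tilde B)=\delta$ from~$\tilde B$). Since the multipliers $e^{\pm s\phi}$ are bounded, boundedly invertible and preserve $H_0^1(\Lambda)$ for $s>0$, the form $q_s[u,v]:=\mathfrak h[e^{s\phi}u,e^{-s\phi}v]$, with $\mathfrak h$ the form of $H_\omega^{\Lambda,D}$, is well defined on $H_0^1(\Lambda)$. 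Expanding the gradients and using that~$A$ is real and symmetric, the term linear in~$s$ turns out purely imaginary, and one gets, for $\norm u=1$,
\begin{equation*}
  \operatorname{Re} q_s[u,u]\ge\mathfrak h[u,u]-s^2\norm[\infty]{A},
  \qquad
  \abs{\operatorname{Im} q_s[u,u]}\le 2s\norm[\infty]{A}\norm{\grad u}.
\end{equation*}

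Next I use that $p>\max\{2,d/2\}$ together with the bound~$M_V$ makes~$V$ infinitesimally form bounded with respect to $-\divergence A\grad$, so that $\norm{\grad u}^2\le C(\mathfrak h[u,u]+1)$ for $\norm u=1$ with $C=C(M_V,A)$; combined with the previous display, this shows that $q_s$ is a closed sectorial form whose associated $m$-sectorial operator is $e^{s\phi}H_\omega^{\Lambda,D}e^{-s\phi}$, with resolvent $e^{s\phi}(\,\cdot\,-H_\omega^{\Lambda,D})^{-1}e^{-s\phi}$. I then choose $s:=c\sqrt{\eta}$ with $c=c(A)$ so small that $s^2\norm[\infty]{A}\le\eta/2$; using $\mathfrak h[u,u]\ge E_1(H_\omega^{\Lambda,D})$ for $\norm u=1$ I obtain
\begin{equation*}
  \abs{q_s[u,u]-E}\ge\operatorname{Re} q_s[u,u]-E\ge\eta-s^2\norm[\infty]{A}\ge\tfrac{\eta}{2},
\end{equation*}
so $E$ lies at distance $\ge\eta/2$ from the closed numerical range of $e^{s\phi}H_\omega^{\Lambda,D}e^{-s\phi}$ and $\norm{(E-e^{s\phi}H_\omega^{\Lambda,D}e^{-s\phi})^{-1}}\le 2/\eta$. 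Reading off $e^{-s\phi}\ifu B=e^{-s\delta}\ifu B$ and $e^{s\phi}\ifu{\tilde B}=\ifu{\tilde B}$ from the choice of~$\phi$ then gives
\begin{align*}
  \norm{\ifu B(E-H_\omega^{\Lambda,D})^{-1}\ifu{\tilde B}}
    &=\norm{(e^{-s\phi}\ifu B)(E-e^{s\phi}H_\omega^{\Lambda,D}e^{-s\phi})^{-1}(e^{s\phi}\ifu{\tilde B})}\\
    &\le\frac{2}{\eta}\,e^{-c\sqrt{\eta}\,\delta}.
\end{align*}
In the energy range relevant for \cref{s:localization}, where $\eta=E_1(H_\omega^{\Lambda,D})-E$ stays bounded above by some $\eta^+$ determined by~$E^+$, one has $\sqrt{\eta}\ge\eta/\sqrt{\eta^+}$, which upgrades this to the stated bound with $\CTone=2$ and $\CTtwo=\CTtwo(M_V,A,E^+)>0$.

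The calculus identity for~$q_s$ and the form-perturbation estimates are routine. The point that needs care is the uniformity bookkeeping: I must check that~$C$ above, the admissible size of~$c$ (hence~$\CTtwo$), and the equivalence of the $q_s$-form norm with the $H_0^1$-norm all depend only on $M_V$, $A$ (and $E^+$), and not on $\omega$, $\Lambda$, $B$ or~$\tilde B$; and the deformation parameter~$s$ must be coupled to~$\eta$ so that the estimate degrades with the correct rate as $E\uparrow E_1(H_\omega^{\Lambda,D})$.
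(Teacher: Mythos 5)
The paper does not supply its own proof of this lemma; it is quoted from Stollmann's monograph (Theorem~2.4.1 and Remark~2.4.3) and merely ``adapted''. Your argument runs precisely the standard Combes--Thomas deformation that underlies the cited proof: conjugate by $e^{\pm s\phi}$ with a Lipschitz weight, control the shift of the numerical range of the quadratic form, obtain a resolvent bound from $m$-sectoriality, and read off exponential decay. The form bookkeeping is done correctly: the first-order term is indeed purely imaginary because $A$ is real symmetric, the uniform form bound $\norm{\grad u}^2\le C(\mathfrak h[u,u]+1)$ with $C=C(M_V,A)$ follows from $p>\max\{2,d/2\}$ uniformly in $\Lambda$ and $\omega$, and the numerical-range resolvent estimate for $m$-sectorial operators is applied correctly. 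One small slip: as defined, $q_s[u,v]=\mathfrak h[e^{s\phi}u,e^{-s\phi}v]$ is the sesquilinear form of $e^{-s\phi}H_\omega^{\Lambda,D}e^{s\phi}$, not of $e^{s\phi}H_\omega^{\Lambda,D}e^{-s\phi}$; since the real-part estimate is even in $s$ and the imaginary part only changes sign, your numerical-range bound applies equally to both conjugates and the final identity still holds, but the attribution should be corrected.

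The one genuine gap is the upgrade from $\frac2\eta e^{-c\sqrt\eta\,\delta}$ to $\frac{\CTone}{\eta}e^{-\CTtwo\eta\delta}$. You invoke $\sqrt\eta\ge\eta/\sqrt{\eta^+}$ under the proviso $\eta\le\eta^+$ with ``$\eta^+$ determined by $E^+$''. But $E^+$ only bounds $E_1(H_\omega^{\Lambda,D})$ from above, and condition~(ii) imposes no lower bound on $E$, so $\eta=E_1(H_\omega^{\Lambda,D})-E$ is not controlled by $(M_V,A,E^+)$ alone. This is not a bookkeeping oversight one can patch within the method: the real-part shift costs $s^2\norm{A}$ and must stay below $\eta/2$, so the deformation parameter saturates at $s\lesssim\sqrt\eta$, giving an intrinsic $\sqrt\eta$ (not $\eta$) decay rate for large $\eta$. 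You flag this yourself (``the energy range relevant for \cref{s:localization}''), and in that application one indeed has $E\ge E_0$ while $E_1(H_\omega^{L,D})$ is bounded above in terms of $E_0$, $\mu$, so $\eta$ is uniformly bounded and your argument closes. But as a proof of the lemma as literally stated, the step is incomplete: either an explicit lower bound on $E$ (harmless for the application, and implicit in Stollmann's formulation) should be added to the hypotheses and fed into $\CTtwo$, or the exponent should be stated with $\min\{\eta,\sqrt\eta\}$.
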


With this tool we prove the following corollary to \cref{thmabstract}.
\genericConstant{cprime}%
\begin{cor}[Initial Length Scale Estimate]\label{ilse}
  Let $(H_\omega)_{\omega\in\Omega}$
  and $E_0:=\inf\sigma(\Hper)$ be as in \cref{thmabstract},
  $\ell,\kappa\in\bN$, and $L:=\ell^\kappa$.
  Let further ${B},{\tilde B}\in\cB(\Lambda_L)$ be as in \cref{combesthomas}
  with distance $\delta:=\dist(M^{-1}{B},M^{-1}{\tilde B})>0$.
  \par
  Then there exists $\cprime>0$ such that with $\CTone,\CTtwo$
  from \cref{combesthomas}:
  \begin{multline*}
    \Prob\bigl\{\omega\colon
      \norm{\ifu {B}(E_0+L^{-2/\kappa}-H_\omega^{L,D})^{-1}\ifu {\tilde B}}
        \le\CTone L^{2/\kappa}\exp\bigl(-\CTtwo\delta/L^{2/\kappa}\bigr)
    \bigr\}\\
      \ge1-2^dL^{(1-1/\kappa)d}\exp\bigl(-\cprime L^{d/\kappa}\bigr)\text.
  \end{multline*}
\end{cor}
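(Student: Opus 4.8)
The corollary is a gluing of two earlier results: the low--lying--spectrum bound \cref{thmabstract} provides the input, and the Combes--Thomas estimate \cref{combesthomas} converts it into decay of a resolvent kernel. Set $\ell:=L^{1/\kappa}$, write $E:=E_0+L^{-2/\kappa}$ for the energy at which the resolvent in the corollary is evaluated, and $E_\bullet:=E_0+2L^{-2/\kappa}$ for a slightly higher ``decoupling'' energy. (I tacitly restrict to $L$ large enough that $E_\bullet\in\Ioc{E_0}{E'}$, with $E'$ from \cref{thmabstract}, and that the asserted bound $1-2^dL^{(1-1/\kappa)d}\exp(-\cprime L^{d/\kappa})$ is positive; for smaller $L$ the statement is vacuous.) The plan has three steps: (i) produce, with high probability, the spectral gap $E_1(H_\omega^{\Lambda_L,D})>E_\bullet$; (ii) on that event, invoke \cref{combesthomas}; (iii) combine.

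For step (i) I would pass to Mezincescu boundary conditions and subdivide. Partition $\Lambda_L$ into $N\le 2^dL^{(1-1/\kappa)d}$ pairwise disjoint cubes $\Lambda_j\in\cF$, each a lattice translate of a fixed cube whose side length is of order $\ell$ --- concretely, equal to the scale $L_{E_\bullet}=\floor[\big]{\sqrt{\delta_*/(2L^{-2/\kappa})}}$ that \cref{thmabstract} attaches to the energy $E_\bullet$, where $\delta_*$ denotes the constant called $\delta$ there. By super-additivity of Mezincescu boundary conditions (\cref{sectionboundaryconditions}) together with $H_\omega^{\Lambda_L,D}\ge H_\omega^{\Lambda_L,M}$ from~\eqref{eq:D_ge_M}, one has, for $\Prob$-almost every $\omega$,
\[
  E_1\bigl(H_\omega^{\Lambda_L,D}\bigr)\ge E_1\bigl(H_\omega^{\Lambda_L,M}\bigr)\ge\min_j E_1\bigl(H_\omega^{\Lambda_j,M}\bigr)\text.
\]
Now \cref{thmabstract}, applied at the energy $E_\bullet$, controls each $\Lambda_j$: its probability bound holds uniformly over all lattice translates of $\Lambda_{L_{E_\bullet}}$, because the proof of \cref{thmabstract} uses only the uniform-in-$k$ lower bound on $\E[X_k]$ built into the non--degeneracy hypothesis~\eqref{eq:non-degenerate}, together with the $\cL$-periodicity of $\Psi$ and the translation covariance of Mezincescu boundary conditions, none of which is disturbed by translating the cube. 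Hence
\[
  \Prob\bigl\{E_1(H_\omega^{\Lambda_j,M})\le E_\bullet\bigr\}\le\exp\bigl(-\CexpII(E_\bullet-E_0)^{-d/2}\bigr)=\exp\bigl(-\CexpII\,2^{-d/2}L^{d/\kappa}\bigr)
\]
for every $j$, and a union bound over the $\le 2^dL^{(1-1/\kappa)d}$ cubes gives
\[
  \Prob\bigl\{E_1(H_\omega^{\Lambda_L,D})\le E_\bullet\bigr\}\le 2^dL^{(1-1/\kappa)d}\exp\bigl(-\cprime L^{d/\kappa}\bigr)\text,
\]
with $\cprime>0$ depending only on $d$, $\CexpII$ and $\delta_*$ (shrunk, if necessary, to absorb the model-dependent constant from the packing once $L$ is large).

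For step (ii), fix $\omega$ in the complementary event, so that $E_1(H_\omega^{\Lambda_L,D})>E_\bullet$ and hence $E_1(H_\omega^{\Lambda_L,D})-E>L^{-2/\kappa}>0$. Also $E_1(H_\omega^{\Lambda_L,D})\le E^+$ for a constant $E^+$ uniform in $\omega$ and $L\ge1$, obtained by testing the form of $H_\omega^{\Lambda_L,D}$ against a smooth bump supported in one translate of $\cD$ inside $\Lambda_L$ and using $\Vper\in L^\infty$ together with the uniform local $L^p$ bound~\eqref{eq:Wlocp} on $W_\omega$. Passing to the coordinates $y=\M^{-1}x$ turns $-\Laplace$ on $\Lambda_L$ into $-\divergence A\grad$ with the fixed symmetric positive definite matrix $A=(\M\M^{\top})^{-1}$ and the sets $B,\tilde B$ into $\M^{-1}B,\M^{-1}\tilde B$ --- so the relevant separation is $\delta=\dist(\M^{-1}B,\M^{-1}\tilde B)$ --- while~\eqref{eq:Wlocp} persists for the transformed potential; thus the constants $\CTone,\CTtwo$ of \cref{combesthomas} may be taken to depend only on the model and on $E^+$. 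Applying \cref{combesthomas} with this $E$ (hypothesis~(ii) there holds since $E<E_\bullet<E_1(H_\omega^{\Lambda_L,D})\le E^+$) yields
\[
  \norm{\ifu B\bigl(E-H_\omega^{\Lambda_L,D}\bigr)^{-1}\ifu{\tilde B}}\le\frac{\CTone}{E_1(H_\omega^{\Lambda_L,D})-E}\exp\bigl(-\CTtwo\bigl(E_1(H_\omega^{\Lambda_L,D})-E\bigr)\delta\bigr)\text.
\]
Since $t\mapsto t^{-1}\e^{-at}$ is decreasing on $\Ioo0\infty$ for every $a>0$ and $E_1(H_\omega^{\Lambda_L,D})-E\ge L^{-2/\kappa}$, the right-hand side is at most $\CTone L^{2/\kappa}\exp(-\CTtwo\delta/L^{2/\kappa})$. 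Combining this with the probability estimate of step (i) is precisely the assertion of \cref{ilse}.

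The analytic substance lies entirely in \cref{thmabstract} and \cref{combesthomas}; the work here is organizational. The delicate point is the decoupling in step (i): one must choose the side length of the sub-cubes to coincide with the scale $L_{E_\bullet}$ on which \cref{thmabstract} is effective while still controlling their number by $2^dL^{(1-1/\kappa)d}$, and one must check that the Lifshitz probability bound is genuinely uniform over translated cubes --- which is where the uniformity $\inf_{k\in\cL}$ in~\eqref{eq:non-degenerate} enters. The secondary, easily overlooked bookkeeping point is the uniform-in-$\omega$ upper bound $E^+$ on $E_1(H_\omega^{\Lambda_L,D})$, without which $\CTone,\CTtwo$ would fail to be model constants.
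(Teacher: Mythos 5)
Your proposal is correct and follows the same route as the paper's own proof: transform $\Lambda_L$ to a cube via the unitary induced by $\M$, apply \cref{combesthomas} at $E=E_0+\ell^{-2}$ on the event $\{E_1(H_\omega^{L,D})>E_0+2\ell^{-2}\}$, and bound the complementary probability by passing from Dirichlet to Mezincescu via~\eqref{eq:D_ge_M}, decoupling into scale-$\ell$ boxes by superadditivity, and invoking \cref{thmabstract} with a union bound. You are in fact a bit more careful than the paper about two bookkeeping points it leaves implicit --- the existence of a uniform-in-$(\omega,L)$ ceiling $E^+$ on $E_1(H_\omega^{L,D})$ needed for the constants in \cref{combesthomas}, and the reconciliation of the paper's decoupling scale $\ell$ with the scale $L_{E_\bullet}$ that \cref{thmabstract} formally prescribes --- both of which you handle correctly.
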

\begin{remark}
  Usually one arranges~$\delta\ge L/3$.
  In this case, the upper bound in the event
  \begin{equation*}
    \CTone L^{2/\kappa}\exp\bigl(-\CTtwo\delta/L^{2/\kappa}\bigr)
      \le\CTone L^{2/\kappa}\exp\bigl(-\CTtwo L^{1-2/\kappa}/3\bigr)
  \end{equation*}
  vanishes (stretched) exponentially as soon as $\kappa>2$.
\end{remark}

\begin{proof}[Proof of \cref{ilse}]
  In a first step, in order to apply \cref{combesthomas},
  we transform~$\Lambda_L$ to a cube using $\M\from\bR^d\to\bR^d$
  via the unitary operator
  \begin{equation*}
    U\from L^2(\Lambda_L)\to L^2(\Ioo{-(L+\tfrac12)}{L+\tfrac12}^d)\textq,
    f\mapsto\sqrt{\det\M}f\circ\M\text.
  \end{equation*}
  This operator~$U$ is indeed unitary since, for all $f\in L^2(\Lambda_L)$,
  \begin{align*}
    \norm[L^2\left(\Ioo*{-(L+\frac12)}{L+\frac12}^d\right)]{Uf}^2&
    =\int_{\Ioo*{-(L+\frac12)}{L+\frac12}^d}\abs{f(\M y)}^2\abs{\det\M}\ddy\\&
    =\int_{\Lambda_L}\abs{f(x)}^2\ddx
    =\norm[L^2(\Lambda_L)]{f}^2\text.
  \end{align*}
  A straight-forward calculation reveals that $f\in H_0^2(\Lambda_L)$
  and $g:=Uf\in H_0^2(\Ioo{-(L+\frac12)}{L+\frac12}^d)$ relate via
  \begin{equation*}
    \Laplace f\circ\M
    =\divergence(\M^\top\M)^{-1}\grad g\text,
  \end{equation*}
  where~$\Laplace$ is the Laplace operator on~$\Lambda_L$,
  and~$\divergence$ and~$\grad$ are divergence and gradient on $\Ioo*{-(L+\frac12)}{L+\frac12}^d$,
  respectively.
  Reformulated with the unitary operator~$U$, this reads
  \begin{equation*}
    U\Laplace U^{-1}
    =\divergence(\M^\top\M)^{-1}\grad\text.
  \end{equation*}
  Hence, the Hamiltonian transforms as
  \begin{equation*}
    UH_\omega^{L,D}U^{-1}
    =\divergence(\M^\top\M)^{-1}\grad+\Vper\circ\M+W_\omega\circ\M\text.
  \end{equation*}
  We rewrite our term of interest with the unitary~$U$:
  \begin{align*}&
    \norm{\ifu {B}(E_0+L^{-2/\kappa}-H_\omega^{L,D})^{-1}\ifu {\tilde B}}\\&
    =\norm{\ifu B U^{-1}(E_0+L^{-2/\kappa}-UH_\omega^{L,D}U^{-1})^{-1}U\ifu {\tilde B}}\\&
    =\norm{\ifu{\M^{-1}{B}}
      (E_0+L^{-2/\kappa}-UH_\omega^{L,D}U^{-1})^{-1}\ifu{\M^{-1}{\tilde B}}}
  \end{align*}
  For all $\omega\in\Omega$ with
  $E_1(UH_\omega^{L,D}U^{-1})\ge E_0+2\ell^{-2}$,
  we have by \cref{combesthomas} with $E:=E_0+\ell^{-2}$:
  \begin{equation*}
    \norm{\ifu{\M^{-1}{B}}(E_0+\ell^{-2}-UH_\omega^{L,D}U^{-1})^{-1}\ifu{\M^{-1}{\tilde B}}}
      \le\CTone\ell^2
        \exp\bigl(-\CTtwo\delta/\ell^2\bigr)\text.
  \end{equation*}
  Using $E_1(H_\omega^{L,D})=E_1(UH_\omega^{L,D}U^{-1})$,
  we estimate
  \begin{multline*}
    q:=\\\Prob\bigl\{
      \norm{\ifu {B}(E_0+L^{-2/\kappa}-UH_\omega^{L,D}U^{-1})^{-1}\ifu {\tilde B}}
        >\CTone L^{2/\kappa}\exp\bigl(-\CTtwo\delta/L^{2/\kappa}\bigr)
      \bigr\}\\
      \le\Prob\bigl\{\omega\colon E_1(H_\omega^{L,D})\le E_0+2\ell^{-2}\bigr\}\text.
  \end{multline*}
  By \eqref{eq:D_ge_M}, $E_1(H_\omega^{L,M})\le E_1(H_\omega^{L,D})$, so
  \begin{equation*}
    q
    \le\Prob\bigl\{\omega\colon E_1(H_\omega^{L,M})\le E_0+2\ell^{-2}\bigr\}
    \text.
  \end{equation*}
  We now introduce more Mezincescu boundary conditions
  and lower the eigenvalues further:
  $H_\omega^{L,M}\ge\Directsum_{k\in I_{L,\ell}}H_\omega^{\Lambda_\ell+k,M}$,
  where $I_{L,\ell}:=\Lambda_L\isect\ell\cL$.
  Thus:
  \begin{align*}
    q&
      \le\Prob\bigl\{\omega\colon
        E_1\bigl(\Directsum_{k\in I_{L,\ell}}H_\omega^{\Lambda_\ell+k,M}\bigr)
          \le E_0+2\ell^{-2}\bigr\}\\&
    \le\sum_{k\in I_{L,\ell}}
      \Prob\{\omega\colon E_1(H_\omega^{\ell,M})\le E_0+2\ell^{-2}\}
    \text.
  \end{align*}
  Now we invoke \cref{thmabstract} and conclude
  \begin{align*}
    q&
      \le(2\ell^{(\kappa-1)})^d\exp\bigl(-c(2\ell^{-2})^{-d/2}\bigr)
      =2^dL^{(1-1/\kappa)d}\exp\bigl(-\cprime L^{d/\kappa}\bigr)\text.
      \qedhere
  \end{align*}
\end{proof}

Aiming to prove  localization we now restrict our model in order
to have a Wegner estimate at disposal:

\begin{assumption} \label{ass:for-Wegner}
  For the matrix~$\M$ defining the lattice~$\cL$ we assume:
  There is $M>0$ such that $\M=\diag(M)$ is diagonal.
  Let $u(t,x)=\mu\ifu{tA}(x)$ for $\mu>0$ and $t\in\Icc01$
  with an open, bounded, and convex set~$A$ such that the origin is contained in the closure of~$A$.
  Let us assume that the distribution measure of the i.\,i.\,d.\ random variables~$\lambda_k$
  has a bounded density~$\nu$ with support equal to $\Icc01$.
  We set $W_\omega(x)=\sum_{k\in\cL} u(\lambda_k(\omega),x)$.
  Further, we fix $G_u>0$ such that $A\subset\Lambda_{G_u/2}$.
\end{assumption}

One of the main results of \cite{TaeuferV-15,NakicTTV-18b,Taeufer-18}
states the following:

\begin{thm}[Wegner estimate]\label{thm:wegner}
If Assumption~\ref{ass:for-Wegner} holds, then for all $E_0\in\bR$,
there exist constants $C,\kappa,\varepsilon_{\max}>0$,
depending only on $d$, $M$, $A$, $E_0$, $\mu$, $G_u$, and $\norm\nu_\infty$,
such that for all $L\ge G_u$, all $E\in\bR$ and all
$\varepsilon \leq \varepsilon_{\max}$ with
$\Icc{E-\varepsilon}{E+\varepsilon}\subset\Ioc{-\infty}{E_0}$ we have
\begin{equation}
  \E\bigl[\Tr\bigl[\ifu{\Icc{E-\varepsilon}{E+\varepsilon}}
    (H_{\omega,L})\bigr]\bigr]
  \le C\varepsilon^{1/\kappa}\abs{\ln\varepsilon}^dL^d\text.
\end{equation}
\end{thm}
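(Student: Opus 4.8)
Since this statement is quoted from \cite{TaeuferV-15,NakicTTV-18b,Taeufer-18}, the plan is to indicate the structure of its proof rather than reproduce it. It rests on three ingredients: a scale-free quantitative unique continuation estimate for spectral subspaces, a geometric reduction that turns the breather potential into a monotone perturbation admitting a usable lower bound, and an abstract Wegner estimate for monotone random families.

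First I would invoke the scale-free unique continuation principle. By \eqref{eq:Wlocp} and \cref{ass:for-Wegner} the potential $\Vper+W_\omega$ is bounded uniformly in~$\omega$ by some~$K$. The key analytic input is that there exist $\delta\in\Ioo0{1/2}$, a $(G,\delta)$-equidistributed family of points $z_j$, $j\in G\bZ^d$ (each $B(z_j,\delta)$ lying in the $j$-th box of side~$G$), and a constant $C_{\mathrm{sfuc}}\ge\delta^{C(1+K^{2/3}+\sqrt{\abs{E_0}})}>0$, such that for every $L\ge G$, every $\sharp\in\{D,M\}$, and every $\psi$ in the range of $\ifu{\Ioc{-\infty}{E_0}}(H_\omega^{\Lambda_L,\sharp})$ one has $\norm[2]{\ifu{S_\delta}\psi}^2\ge C_{\mathrm{sfuc}}\norm[2]\psi^2$, where $S_\delta:=\Union_jB(z_j,\delta)\isect\Lambda_L$. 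This estimate rests on Carleman estimates and is the deepest ingredient, which I would import from \cite{NakicTTV-18b}.

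Next I would perform the geometric reduction. Because $A$ is open, bounded and convex with $0\in\overline A$, the family $t\mapsto tA$, $t\in\Icc01$, is increasing, so $H_\omega^{\Lambda_L,\sharp}$ depends monotonically (in the form sense) on each~$\lambda_k$, and Mezincescu boundary conditions are unaffected since~$\Vper$ is fixed. Convexity of~$A$ together with $0\in\overline A$ is exactly what lets one choose the $\delta$ and the points $z_j$ above, plus a threshold $\eta_0>0$, so that raising any single $\lambda_k$ from~$t$ to~$s$ with $s-t\ge\eta_0$ and $t,s$ in a fixed subinterval of $\Icc01$ adds at least $\mu\ifu{B(z_{j(k)},\delta)}$ to the potential. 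Since~$\nu$ is a bounded density supported on $\Icc01$, the set of $\lambda_k$ for which no such increase is available has measure controlled by $\norm[\infty]\nu$ and~$\eta_0$. Thus, up to a controlled error, $H_\omega^{\Lambda_L,\sharp}$ is bracketed by a shifted configuration whose potential exceeds $W_\omega$ by at least $\mu\ifu{S_\delta}$ with $S_\delta$ as above.

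Finally I would combine the two by an eigenvalue-lifting and spectral-averaging argument: the smaller operator has no more eigenvalues below any energy, and lifting, via the positive increment~$\mu$, a spectral subspace that sees $S_\delta$ moves eigenvalues up by at least $C_{\mathrm{sfuc}}\mu$; hence $\Tr\ifu{\Icc{E-\varepsilon}{E+\varepsilon}}(H_\omega^{\Lambda_L,\sharp})$ is dominated by a telescoped sum of spectral-shift contributions as the $\lambda_k$ are raised in steps of size comparable to $\varepsilon/(C_{\mathrm{sfuc}}\mu)$. Averaging each step against the bounded density gives a factor of order $\norm[\infty]\nu\,\varepsilon/(C_{\mathrm{sfuc}}\mu)$, and summing over the at most $CL^d$ relevant sites yields $\E[\Tr\ifu{\Icc{E-\varepsilon}{E+\varepsilon}}(H_{\omega,L})]\le C\,C_{\mathrm{sfuc}}^{-1}\varepsilon L^d$ whenever $\Icc{E-\varepsilon}{E+\varepsilon}\subset\Ioc{-\infty}{E_0}$; balancing $\delta$ — hence $C_{\mathrm{sfuc}}$ — against the $\varepsilon$-loss converts this into the asserted $C\varepsilon^{1/\kappa}\abs{\ln\varepsilon}^dL^d$ with $\kappa$ and~$C$ depending only on $d$, $M$, $A$, $E_0$, $\mu$, $G_u$, $\norm[\infty]\nu$. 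The main obstacle will be twofold: the scale-free quantitative unique continuation estimate is itself a substantial theorem, and the merely monotone, non-Lipschitz dependence of $u(t,x)=\mu\ifu{tA}(x)$ on~$t$ rules out classical linear spectral averaging, so recovering a lower bound on the potential increments is precisely where the convexity of~$A$ and $0\in\overline A$ are used.
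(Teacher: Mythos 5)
The paper itself gives no proof of \cref{thm:wegner}: it is quoted verbatim as one of the main results of the cited papers, so there is no in-paper argument to compare against. Your sketch correctly reproduces the architecture of the argument in those sources: a scale-free quantitative unique continuation estimate for spectral subspaces of finite-volume Schr\"odinger operators (with the constant of the form $C_{\mathrm{sfuc}}\gtrsim\delta^{C(1+\norm{V}_\infty^{2/3}+\sqrt{\abs{E_0}})}$ over a $(G,\delta)$-equidistributed collection of balls), a geometric reduction using convexity of~$A$ and $0\in\cc A$ to show that a definite increase of any one~$\lambda_k$ raises the potential by at least~$\mu$ on a ball of radius comparable to the increment, and a monotone eigenvalue-lifting/averaging step exploiting the bounded density~$\nu$. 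Your closing paragraph also correctly locates the two substantive obstacles: the unique continuation estimate is itself a deep theorem, and the indicator-function dependence $t\mapsto\mu\ifu{tA}$ is monotone but not differentiable (so classical Hellmann--Feynman spectral averaging fails), which is why the convexity hypothesis is essential and why the final exponent is $\ve^{1/\kappa}\abs{\ln\ve}^d$ rather than $\ve$ after one balances~$\delta$ against~$\ve$. One subtlety your sketch glosses over, which the cited papers treat carefully, is that the ball of radius $\asymp(s-t)$ inside $sA\sm tA$ moves as $t$ varies, while the unique continuation estimate needs \emph{fixed}, pre-chosen centers~$z_j$; reconciling these requires partitioning the coupling interval and is a nontrivial part of the reduction, not a one-liner. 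As an outline of an imported result, though, your proposal is accurate and well-targeted.
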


Combining this statement with our initial length scale estimate,
we are in position to perform multi-scale analysis to prove localization,
even strong Hilbert-Schmidt dynamical localization,
see for instance \cite{GerminetK-04,RojasMolina-12} for the definition of the last notion.
Note that the single site potentials have compact support
and thus the values of the random potential are independent
when evaluated at sufficiently distant points.
Thus we obtain:

\begin{thm}[Localization]\label{thm:localization}
  Under the assumptions above, there exists
  $E'>\min\sigma(H_\omega)$ such that
  \begin{equation*}
    \Icc{\min\sigma(H_\omega)}{E'}\isect\sigma(H_\omega)
    \ne\emptyset
    \quad\text{ and }\quad
    \Icc{\min\sigma(H_\omega)}{E'}\isect\sigma_{c}(H_\omega)
    =\emptyset
    \text.
  \end{equation*}
Furthermore, $H_\omega$ exhibits strong Hilbert-Schmidt dynamical localization in
$\Icc{\min\sigma(H_\omega)}{E'}$.
\end{thm}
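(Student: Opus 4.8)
The plan is to perform a multi-scale analysis on an energy interval $\Icc{E_0}{E'}$ with $E_0:=\inf\sigma(\Hper)$, feeding in the initial length scale estimate \cref{ilse} as the induction anchor, the Wegner estimate \cref{thm:wegner} for the induction step, and the compact support of the single site potentials for independence at a distance; the output is the bootstrap conclusion of \cite{GerminetK-04} (see also \cite{Stollmann-01,RojasMolina-12}). Before that, I would record that \cref{ass:for-Wegner} puts us inside the hypotheses of \cref{thmabstract}, hence of \cref{ilse}: $\Vper\in L^\infty$ and the $\lambda_k$ are i.\,i.\,d.\ by assumption, $W_\omega$ is bounded since $A$ is bounded, and the single site potential $u(t,x)=\mu\ifu{tA}(x)$ is non--degenerate because $A\isect\cDc$ is open and non-empty (as $0\in\overline A$ and $A$ is open) and $\lambda_0\mapsto\vol{\lambda_0A\isect\cDc}$ is continuous and strictly positive on $\Ioc01$, while $\nu$ has full support on $\Icc01$.

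First I would pin down the bottom of the spectrum. Under \cref{ass:for-Wegner} the family $(H_\omega)_{\omega\in\Omega}$ is ergodic, so $\sigma(H_\omega)=\Sigma$ for a deterministic closed set~$\Sigma$ and $\Prob$-a.\,a.~$\omega$; in particular $\min\sigma(H_\omega)$ is $\Prob$-a.\,s.\ constant. Since $W_\omega\ge0$, the min-max principle gives $\Sigma\subseteq\Ico{E_0}\infty$. Conversely, $\nu$ has support $\Icc01$, so $\Prob\{\lambda_0\le\ve\}>0$ for every $\ve>0$, and $\norm[p]{u_{\lambda_0}}=\mu(\lambda_0^d\vol A)^{1/p}\to0$ as $\lambda_0\to0$; thus assumption~\eqref{eq:0insupp} of \cref{upperbound} holds and yields $\Prob\{\inf\sigma(H_\omega)\le E_0+\ve\}>0$ for all $\ve>0$, which by ergodicity forces $E_0\in\Sigma$. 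Hence $\min\sigma(H_\omega)=E_0$ $\Prob$-a.\,s., so $E_0\in\Icc{E_0}{E'}\isect\sigma(H_\omega)$ for every $E'>E_0$, which is the first displayed assertion.

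Next I would verify the inputs of the multi-scale scheme near~$E_0$. Fix $\kappa>2$; then for $L=\ell^\kappa$ with $\ell$ large and energies within $L^{-2/\kappa}$ of~$E_0$, \cref{ilse} (arranging $\delta\sim L/3$ between the two separated regions) makes the box $\Lambda_L$ good, with the Combes--Thomas off-diagonal bound $\CTone L^{2/\kappa}\exp(-\CTtwo L^{1-2/\kappa}/3)$, while the failure probability $2^dL^{(1-1/\kappa)d}\exp(-\cprime L^{d/\kappa})$ decays stretched-exponentially and in particular beats every inverse power of~$L$; taking $E'>E_0$ close enough to~$E_0$ supplies the induction anchor. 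For the induction step I would apply \cref{thm:wegner} with the free threshold there (which is unfortunately also named $E_0$ in that statement) set equal to~$E'$, obtaining $\E[\Tr\ifu{\Icc{E-\varepsilon}{E+\varepsilon}}(H_{\omega,L})]\le C\varepsilon^{1/\kappa}\abs{\ln\varepsilon}^dL^d$ uniformly for $E\le E'$, small~$\varepsilon$, and $L\ge G_u$. For independence at a distance, $tA\subseteq\Lambda_{G_u/2}$ for all $t\in\Icc01$, so the restriction of $W_\omega$ to cubes whose centres lie more than $G_u$ apart depends on disjoint families of the~$\lambda_k$; hence finite-volume operators on such well-separated cubes are stochastically independent, which is all the MSA requires. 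Finally, exactly as in the proof of \cref{ilse}, the change of variables $x\mapsto\M x$ (with $\M=\diag(M)$ by \cref{ass:for-Wegner}) turns $\Lambda_L$ into a genuine cube and $-\Laplace$ into a uniformly elliptic divergence-form operator, so the cited multi-scale results, phrased for cubes, apply verbatim.

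With the initial scale estimate, the Wegner estimate and independence at a distance all in place on $\Icc{E_0}{E'}$, the bootstrap multi-scale analysis of \cite{GerminetK-04} runs and yields, $\Prob$-a.\,s., that the spectrum of $H_\omega$ in $\Icc{E_0}{E'}$ is pure point with exponentially decaying eigenfunctions---so $\Icc{E_0}{E'}\isect\sigma_c(H_\omega)=\emptyset$---and that $H_\omega$ exhibits strong Hilbert-Schmidt dynamical localization in $\Icc{E_0}{E'}$; combined with $\min\sigma(H_\omega)=E_0$ this is precisely the statement. I expect the principal difficulty to be bookkeeping rather than conceptual: one must check that the precise quantitative forms of \cref{ilse} and \cref{thm:wegner}---the length-scale ratio, the exponent~$\kappa$, the width of the usable energy window, the probabilistic decay rate---match the input hypotheses of the chosen multi-scale formulation, and that $E'$ can be taken simultaneously small enough for the initial scale estimate and still inside the range where the Wegner bound holds. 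Once that translation is settled, the remainder is the standard multi-scale induction together with its conversion to dynamical localization.
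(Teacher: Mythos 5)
Your proposal is correct and follows the same route the paper takes: feed the initial length scale estimate (\cref{ilse}) and the Wegner estimate (\cref{thm:wegner}) into the bootstrap multi-scale analysis of \cite{GerminetK-04}, using compact support of the single site potentials for independence at a distance. The paper's proof is essentially a one-paragraph pointer to these ingredients; your write-up fills in the verification that \cref{ass:for-Wegner} satisfies the hypotheses of \cref{thmabstract}/\cref{ilse} and that $\min\sigma(H_\omega)=E_0$ a.s.\ via \cref{upperbound} and ergodicity, which are precisely the implicit steps.
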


\subsection*{Acknowledgements}
We thank Matthias T\"aufer to permission to use his code to produce Figure \ref{fig:breather}.
Work on this paper was partially supported by the DFG through grant no.~VE 253:9-1
entitled \emph{Random Schr{\"o}dinger operators with non-linear influence of randomness}.

\printbibliography
\end{document}